\newtheorem{theorem}{Theorem}
\newtheorem{corollary}{Corollary}
\newtheorem{conjecture}{Conjecture}
\newtheorem{definition}{Definition}
\newtheorem{example}{Example}
\newtheorem{lemma}{Lemma}
\newtheorem{question}{Question}
\newtheorem{remark}{Remark}
\numberwithin{equation}{section}
\DeclareMathOperator{\rank}{rank}
\DeclareMathOperator{\vertices}{vertices}
\DeclareMathOperator{\faces}{faces}
\DeclareMathOperator{\col}{col}
\DeclareMathOperator{\conv}{conv}
\def \matlab    {MATLAB$^{\text{\tiny \textregistered}}$ }
\title{On the Geometric Interpretation of the Nonnegative Rank}
\author{\normalsize Nicolas Gillis${}^1$ and Fran\c{c}ois Glineur${}^1$}
\date{}
\begin{document}

\maketitle

\begin{abstract} 
The nonnegative rank of a nonnegative matrix is the minimum number of nonnegative rank-one factors needed to reconstruct it exactly. The problem of determining this rank and computing the corresponding nonnegative factors is difficult; however it has many potential applications, e.g., in data mining, graph theory and computational geometry. In particular, it can be used to characterize the minimal size of any extended reformulation of a given combinatorial optimization program. 
In this paper, we introduce and study a related quantity, called the restricted nonnegative rank. We show that computing this quantity is equivalent to a problem in polyhedral combinatorics, and fully characterize its computational complexity. This in turn sheds new light on the nonnegative rank problem, and in particular allows us to provide new improved lower bounds based on its geometric interpretation. We apply these results to slack matrices and linear Euclidean distance matrices and obtain counter-examples to two conjectures of Beasly and Laffey, namely we show that the nonnegative rank of linear Euclidean distance matrices is not necessarily equal to their dimension, and that the rank of a matrix is not always greater than the nonnegative rank of its square.  
\bigskip

\noindent {\bf Keywords:} nonnegative rank, restricted nonnegative rank, nested polytopes, computational complexity, computational geometry, 
extended formulations, linear Euclidean distance matrices.  \bigskip

\noindent {\bf AMS subject classifications.} 15A23, 15B48, 52B05, 52B11, 65D99, 65F30, 90C27.  
\end{abstract}


\footnotetext[1] {Universit\'e catholique de Louvain, CORE, B-1348 Louvain-la-Neuve, Belgium. 
E-mail: nicolas.gillis@uclouvain.be and francois.\mbox{glineur}@uclouvain.be.  Nicolas Gillis is a research fellow of the Fonds de la Recherche Scientifique (F.R.S.-FNRS).  This text presents research results of the Belgian Program on Interuniversity Poles of Attraction initiated by the Belgian State, Prime Minister's Office, Science Policy
Programming. The scientific responsibility is assumed by the authors.}

\section{Introduction} \label{intro}

The nonnegative rank of a $m \times n$ real nonnegative matrix $M \in \mathbb{R}^{m \times n}_+$ is the minimum number of nonnegative rank-one factors needed to reconstruct $M$ exactly, i.e., the minimum $k$ such that there exists $U \in \mathbb{R}^{m \times k}_+$ and $V \in \mathbb{R}^{k \times n}_+$ with $M = UV = \sum_{i = 1}^k U_{:i} V_{i:}$. The pair $(U,V)$ is called a rank-$k$ nonnegative factorization\footnote{Notice that matrices $U$ and $V$ in a rank-$k$ nonnegative factorization are not required to have rank $k$.}  of $M$.  The nonnegative rank of $M$ is denoted $\rank_+(M)$. Clearly,
\begin{equation}
\rank(M) \leq \rank_+(M) \leq \min(m,n). \nonumber
\end{equation}
Determining the nonnegative rank and computing the corresponding nonnegative factorization is a relatively recently studied problem in linear algebra \cite{BP73, CR93}. In the literature, much more attention has been devoted to the approximate nonnegative factorization problem (called nonnegative matrix factorization, NMF for short \cite{LS99}) consisting in finding two low-rank nonnegative factors $U$ and $V$ such that $M \approx UV$ or, more precisely, solving \[ \min_{U \in \mathbb{R}^{m \times k}_+, V \in \mathbb{R}^{k \times n}_+} ||M - UV||_F. \]
NMF has been widely used as a data analysis technique \cite{BBLPP07}, e.g., in text mining, image processing, hyperspectral data analysis, computational biology, clustering, etc. Nevertheless, there are not too many theoretical results about the nonnegative rank and better characterizations, in particular lower bounds, could help practitioners. For example, efficient computations of nonnegative factorizations could help to design new NMF algorithms using a two-step strategy \cite{Vav}: first approximate $M$ with a low-rank nonnegative matrix $A$ (e.g., using the singular value decomposition\footnote{Even though the optimal low-rank approximation of a nonnegative matrix might not necessarily be nonnegative (except in the rank-one case), it is often the case in practice \cite{KCD09}.}) 
and then compute a nonnegative factorization of $A$. Bounds for the nonnegative rank could also help select the factorization rank of the NMF, replacing the trial and error approach often used by practitioners. For example, in hyperspectral image analysis, the nonnegative rank corresponds to the number of materials present in the image and its computation could lead to more efficient algorithms detecting these constitutive elements, see  \cite{B94, C94, IC99, GP10} and references therein.  \\


An extended formulation (or lifting) for a polytope $P \subset \mathbb{R}^n$ is a polyhedron $Q \subset \mathbb{R}^{n+p}$ 
such that 
\[
P \; =  \; \text{proj}_x(Q) \; {:=} \; \{ x \in \mathbb{R}^{n} \, | \, \exists y \in \mathbb{R}^p \text{ s.t. } (x,y) \in Q \}. 
\] 
Extended formulations whose size (number of constraints plus number of variables defining $Q$) is polynomial in $n$ are called \emph{compact} and are of great importance in integer programming. They allow to reduce significantly the size of the linear programming (LP) formulation of certain integer programs, and therefore provide a way to solve them efficiently, i.e., in polynomial-time (see \cite{CCCZ09} for a survey).  
Yannakakis \cite[Theorem 3]{Y91} showed that the minimum size $s$ of an extended formulation of a polytope\footnote{This can be generalized to polyhedra \cite{CCCZ09}.} 
\[
P = \{ x \in \mathbb{R}^n \, | \, Cx \geq d, Ax = b\}, 
\] 
is of the same order as the sum of its dimension $n$ and the nonnegative rank of its slack matrix $S_M \geq 0$, where each column of the slack matrix is defined as 
\begin{equation} \label{SMdef}
S_M(:,i) = Cv_i - d \geq 0, \quad i = 1, 2, \dots, m,
\end{equation}
and vectors $v_i$ are the $m$ vertices of the polytope $P$. Formally, we then have 
\[
s = \Theta(n + \rank_+(S_M)). 
\]
In particular, any rank-$k$ nonnegative factorization $(U,V)$ of $S_M = UV$ provides the following extended formulation for $P$ with size $\Theta(n + k)$  
\begin{equation} \label{extQ} 
Q = \{ (x,y) \in \mathbb{R}^{n+k} \, | \, Cx - Uy = d, Ax = b, y \geq 0\}. 
\end{equation}
In fact, $\text{proj}_x(Q) \subseteq P$ since $Uy \geq 0$ implies $Cx \geq d$ for any $x \in \text{proj}_x(Q)$, and $P \subseteq \text{proj}_x(Q)$ since $Cv_i - UV(:,i) = d$ implies that $(v_i,V(:,i)) \in Q$ for all $i$ and therefore each vertex $v_i$ of $P$ belongs to $\text{proj}_x(Q)$. Intuitively, this extended formulation parametrizes the space of slacks of the original polytope with the convex cone $\{ U y\, |\, y \ge 0\}$. 

 It is therefore interesting to compute bounds for the nonnegative rank in order to estimate the size of these extended formulations. Recently, Goemans \cite{G09} used this result to show that the size of LP formulations of the permutahedron (polytope whose $n!$ vertices are permutations of $[1,2,\dots,n]$) is at least $\Omega(n\log(n))$ variables plus constraints (cf.\@ Section~\ref{NR}). \\

We will see in Section~\ref{geoNN} that the nonnegative rank is closely related to a problem in computational geometry that consists in finding a polytope with minimum number of vertices nested between two given polytopes.  Therefore a better understanding of the properties of the nonnegative rank would presumably also allow to improve characterization of the solutions to this geometric problem. 

The nonnegative rank also has connections with other problems, e.g., in communication complexity theory \cite{Y91, LS09}, probability \cite{CR10}, and graph theory (cf.\@ Section~\ref{NR}).  \\

The main goal of this paper is to provide improved lower bounds on the nonnegative rank. In Section~\ref{RNR}, we introduce a new related quantity called \emph{restricted nonnegative rank}. Generalizing a recent result of Vavasis \cite{Vav} (see also \cite{CL10}), we show that computing this quantity is equivalent to a problem in polyhedral combinatorics, and fully characterize its computational complexity. 
In Section~\ref{NR}, based on the geometric interpretation of the nonnegative rank and the relationship with the restricted nonnegative rank, we derive new improved lower bounds for the nonnegative rank. Finally, in Section~\ref{illus}, we apply our results to slack matrices and linear Euclidean distance matrices. We obtain counter-examples to two conjectures of Beasly and Laffey \cite{BL09}, namely we show that the nonnegative rank of linear Euclidean distance matrices is not necessarily equal to their dimension, and that the rank of a matrix is not always greater than the nonnegative rank of its square.  
 \\

\noindent  \textbf{Notation.}   The set of real matrices of dimension $m$ by $n$ is denoted $\mathbb{R}^{m \times n}$; for $A \in \mathbb{R}^{m \times n}$, we denote the $i^{\text{th}}$ column of $A$ by $A_{:i}$ or $A(:,i)$, the $j^{\text{th}}$ row of $A$ by $A_{j:}$ or $A(j,:)$, and the entry at position $(i,j)$ by $A_{ij}$ or $A(i,j)$; for $b \in \mathbb{R}^{m \times 1} = \mathbb{R}^{m}$, we denote the $i^{\text{th}}$ entry of $b$ by $b_i$.  Notation $A(I,J)$ refers to the submatrix of $A$ with row and column indices respectively in $I$ and $J$, and $a$:$b$ is the set  $\{a,a+1,\dots,b-1,b\}$ (for $a$ and $b$ integers with $a \leq b$). 
The set $\mathbb{R}^{m \times n}$ with component-wise nonnegative entries is denoted $\mathbb{R}^{m \times n}_+$. 
The matrix $A^T$ is the transpose of $A$. 
 The rank of a matrix $A$ is denoted $\rank(A)$, its column space $\col(A)$.  The convex hull of the set of points $S$, or the convex hull of the columns of the matrix $S$ are denoted $\conv(S)$. The number of vertices of the polytope $Q$ is denoted by $\#\vertices(Q)$. The concatenation of the columns of two matrices $A \in \mathbb{R}^{m \times n}$ and $B \in \mathbb{R}^{m \times p}$ is denoted $[A \, B] \in \mathbb{R}^{m \times (n+p)}$. The sparsity pattern of a vector is the set of indices of its zero entries (it is the complement of its support).

\section{Restricted Nonnegative Rank} \label{RNR}


In this section, we analyze the following quantity
\begin{definition} 
The \emph{restricted nonnegative rank} of a nonnegative matrix $M$ is the minimum value of $k$ such that there exists $U \in \mathbb{R}^{m \times k}_+$ and $V \in \mathbb{R}^{k \times n}_+$ with $M = UV$ and ${\rank(U) = \rank(M)}$, i.e., $\col(U) = \col(M)$. It is denoted $\rank_+^*(M)$. 
\end{definition}
In particular, given a nonnegative matrix $M$, we are interested in computing its restricted nonnegative rank $\rank_+^*(M)$ and a corresponding nonnegative factorization, i.e., solve  
\begin{quote} (RNR) Given a nonnegative matrix $M \in \mathbb{R}^{m \times n}_+$, find $k = \rank_+^*(M)$ and compute $U \in \mathbb{R}^{m \times k}_+$ and $V \in \mathbb{R}^{k \times n}_+$ such that $M = UV$ and ${\rank(U) = \rank(M)} = r$. 
\end{quote}
Without the rank constraint on the matrix $U$, this problem reduces to the standard nonnegative rank problem. Motivation to study this restriction includes the following
\begin{enumerate}
\item The restricted nonnegative rank provides a new \emph{upper} bound for the nonnegative rank, since  $\rank_+(M) \leq \rank_+^*(M)$. 
\item The restricted nonnegative rank can be characterized much more easily. In particular, its geometrical interpretation (Section~\ref{equivRNR}) will lead to new improved \emph{lower} bounds for the nonnegative rank (Sections~\ref{NR} and~\ref{illus}).
\end{enumerate}

RNR is a generalization of \emph{exact nonnegative matrix factorization} (exact NMF) introduced by Vavasis \cite{Vav}. Noting $r = \rank(M)$, exact NMF asks whether $\rank_+(M) = r$ and, if the answer is positive, to compute a rank-$r$ nonnegative factorization of $M$. If $\rank_+(M) = r$ then it is clear that $\rank_+^*(M) = \rank_+(M)$ since the rank of $U$ in any rank-$r$ nonnegative factorization $(U,V)$ of $M$ must be equal to $r$. 

Vavasis studies the computational complexity of exact NMF and proves it is NP-hard by showing its equivalence with a problem in polyhedral combinatorics called intermediate simplex. This construction  requires both the dimensions of matrix $M$ and its rank $r$ to increase to obtain NP-hardness. This result also implies NP-hardness of RNR when the rank of matrix $M$ is not fixed. 
However, in the case where the rank $r$ of matrix $M$ is fixed, no complexity results are known (except in the trivial cases $r=1,2$ \cite{Tho}). The situation for RNR is quite different: we are going to show that RNR can be solved in polynomial-time when $r = 3$ and that it is NP-hard for any fixed $r \geq 4$. In particular, this result implies that exact NMF can be solved in polynomial-time for rank-three nonnegative matrices. 

In order to do so, we first show equivalence of RNR with another problem in polyhedral combinatorics, closely related to intermediate simplex (Section~\ref{equivRNR}), and then apply results from the computational geometry literature to conclude about its computational complexity for fixed rank (Section~\ref{CC}).

\subsection{Equivalence with the Nested Polytopes Problem} \label{equivRNR}

Let consider the following problem called nested polytopes problem (NPP):
\begin{quote} (NPP) Given a bounded polyhedron 
\[
P = \{ x \in \mathbb{R}^{r-1} \;|\;   0 \leq f(x) = Cx+d \},
\]
with $(C \; d) \in \mathbb{R}^{m \times r}$ of rank $r$, 
and a set $S$ of $n$ points in $P$ not contained in any hyperplane (i.e., $\conv(S)$ is full-dimensional), find the minimum number $k$ of points in $P$ whose convex hull $T$ contains $S$, i.e., $S \subseteq T \subseteq P$. 
\end{quote}
Polytope $P$ is referred to as the outer polytope, and  $\conv(S)$ as the inner polytope; note that they are given by two distinct types of representations (faces for $P$, extreme points for $\conv(S)$).

The intermediate simplex problem mentioned earlier and introduced by Vavasis \cite{Vav} is a particular case of NPP in which one asks whether $k$ is equal to $r$ (which is the minimum possible value), i.e., if there exists a simplex $T$ (defined by $r$ vertices in a $r-1$ dimensional space) contained in $P$ and containing $S$. \\

We now prove equivalence between RNR and NPP. It is a generalization of the result of  Vavasis \cite{Vav} who showed equivalence of exact NMF and intermediate simplex. 
\begin{theorem} \label{equiv}
There is a polynomial-time reduction from RNR to NPP and vice-versa.
\end{theorem}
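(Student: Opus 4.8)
The plan is to exhibit two reductions. First I would handle the direction from RNR to NPP. Given a nonnegative matrix $M \in \mathbb{R}^{m \times n}_+$ with $r = \rank(M)$, I would seek a factorization $M = UV$ with $U \in \mathbb{R}^{m\times k}_+$, $V \in \mathbb{R}^{k\times n}_+$ and $\col(U)=\col(M)$. The key observation is that, up to normalization, such a factorization is exactly a choice of $k$ points (the "directions" of the columns of $U$, read in suitable coordinates) whose convex hull is sandwiched between two polytopes. Concretely, I would fix a rank factorization $M = AB$ with $A \in \mathbb{R}^{m\times r}$, $B \in \mathbb{R}^{r\times n}$ (e.g.\ via $r$ independent columns of $M$), so that $\col(U)=\col(M)=\col(A)$ forces $U = AG$ for some $G \in \mathbb{R}^{r\times k}$ and then $M = AGV$ with $A$ full column rank gives $B = GV$. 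Normalizing columns (each column of $M$, hence of $B$, can be scaled so that an affine functional equals $1$, using that $M$ is nonnegative and $\conv(S)$ is full-dimensional), the nonnegativity constraints $U = AG \geq 0$ become the statement that each of the $k$ columns of $G$ lies in a polytope $P \subset \mathbb{R}^{r-1}$ defined by $P = \{x \mid Ax' \geq 0\}$ in the affine chart, while $B = GV$ with $V \geq 0$ and the column-sum normalization says precisely that the $n$ normalized columns of $B$ (the set $S$) lie in the convex hull $T$ of those $k$ points. Thus $k = \rank_+^*(M)$ equals the minimum number of vertices of a polytope nested between $\conv(S)$ and $P$, which is NPP; and the reduction is clearly polynomial, constructing $(C\;d)$ from $A$ and $S$ from $B$.

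For the converse, from NPP to RNR, I would reverse this construction. Given the outer polytope $P = \{x \in \mathbb{R}^{r-1} \mid Cx + d \geq 0\}$ with $(C\;d)$ of rank $r$ and $S = \{s_1,\dots,s_n\} \subset P$ full-dimensional, I would set $A = (C\;d) \in \mathbb{R}^{m\times r}$ (so $A$ has rank $r$) and form $M = A \cdot \tilde S$ where $\tilde S \in \mathbb{R}^{r\times n}$ has columns $(s_i;1)$. Then $M$ is nonnegative (its columns are $f(s_i) \geq 0$), $\rank(M) = r$ since $A$ has full column rank and $\tilde S$ has rank $r$ because $\conv(S)$ is full-dimensional, and unwinding the equivalence above shows $\rank_+^*(M)$ equals the minimum $k$ with $S \subseteq T \subseteq P$. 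One has to check that an optimal nested polytope $T = \conv\{t_1,\dots,t_k\}$ yields a valid factorization: take $U$ with columns $A(t_j;1) \geq 0$ (these span $\col(M) = \col(A)$ once one argues $k \geq r$, which holds because $T \supseteq S$ is full-dimensional so needs at least $r$ vertices), and $V$ the barycentric-coordinate matrix expressing each $s_i$ as a convex combination of the $t_j$, which is nonnegative. Conversely any restricted nonnegative factorization of $M$ produces such a $T$.

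The main obstacle I anticipate is the bookkeeping around the normalization and the rank/column-space conditions — in particular, verifying that the column scalings needed to pass to the affine chart are legitimate (all relevant quantities are strictly positive, or can be perturbed to be so) and that the constraint $\col(U) = \col(M)$ matches exactly the requirement that $T$ is full-dimensional with at least $r$ vertices, with no off-by-one slack. A secondary point to be careful about is that NPP as stated allows the $k$ points defining $T$ to be arbitrary points of $P$ (not required to be distinct or in "general position"), so I must make sure the correspondence with columns of $U$ (which may repeat or be rank-deficient as a set, cf.\ the footnote on $U$ not needing rank $k$) is faithful in both directions. Once these correspondences are pinned down, polynomiality of both reductions is immediate since everything is linear-algebraic and the dimensions ($m$, $n$, $r$) are preserved up to constants.
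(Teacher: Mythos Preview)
Your proposal is correct and follows essentially the same route as the paper: factor $M=AB$, use $\col(U)=\col(A)$ to write $U=AG$, normalize columns (the paper uses the column-stochastic convention, i.e.\ the affine functional ``sum of entries equals $1$''), and read off the NPP instance with outer polytope $P=\{x:\,f(x)=Cx+d\ge 0\}$ built from $A$ and inner set $S$ given by the normalized columns of $B$; the reverse reduction is exactly your $M(:,i)=Cs_i+d$. The only items you flag as obstacles---legitimacy of the scaling (handled in the paper by first deleting zero rows and columns), boundedness of $P$ (proved in the paper from the fact that columns of $C$ sum to zero), and the match between $\col(U)=\col(M)$ and full-dimensionality of $T$---are precisely the bookkeeping details the paper works through, so your plan is on target.
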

\begin{proof}
Let us construct a reduction of RNR to NPP. First we (1) delete the zero rows and columns of $M$ and (2) normalize its columns such that $M$ becomes column stochastic (columns are nonnegative and sum to one). One can easily check that it gives a polynomially equivalent RNR instance \cite{CL08}. 
We then decompose $M$ as the product of two rank-$r$ matrices (using, e.g., reduction to row-echelon form) 
\begin{equation} \label{AB}
M = AB \iff M_{:i} = \sum_{l = 1}^r A_{:l} B_{li} \; \forall i,
\end{equation}
where $r = \rank(M)$, $A \in \mathbb{R}^{m \times r}$ and $B \in \mathbb{R}^{r \times n}$. We observe that one can assume without loss of generality that the columns of $A$ and $B$ sum to one. Indeed,  since $M$ is column stochastic, at least one column of $A$ does not sum to zero (otherwise all columns of $AB=M$ would sum to zero). One can then update $A$ and $B$ in the following way so that their columns sum to one: 
\begin{itemize}

\item For each column of $A$ which sums to zero, add a column of $A$ which does not sum to zero, and update $B$ accordingly;

\item Normalize the columns of $A$ such that they sum to one, and update $B$ accordingly;

\item Observe that since the columns of $A$ sum to one, $M$ is column stochastic and since $M = AB$, the columns of $B$ must also sum to one.
 
\end{itemize}

In order to find a solution of RNR, we have to find $U \in \mathbb{R}^{m \times k}_+$ and $V \in \mathbb{R}^{k \times n}_+$ such that $M = UV$ and $\rank(U) = r$. For the same reasons as for $A$ and $B$, $U$ and $V$ can be assumed to be column stochastic without loss of generality. Moreover, since
\[
M = UV = AB,
\]
and $\rank(M) = \rank(A) = \rank(U) = r$, the column spaces of $M$, $A$ and $U$ coincide; 
implying that the columns of $U$ must be a linear combination of the columns of $A$. The columns of $U$ must then belong to the following set
\begin{equation} \label{Q}
Q = \{ u \in \mathbb{R}^{m} \;|\; u \in \col(A), \; u \geq 0 \text{ and } \sum_{i=1}^{m} u_i = 1 \}.
\end{equation}
One can then reduce the search space to the $(r-1)$-dimensional polyhedron corresponding to the coefficients of all possible linear combinations of the columns of $A$ generating stochastic columns. Defining 
\begin{equation} \label{Cd}
C(:,i) = A(:,i)-A(:,r) \quad 1 \leq i \leq r-1, \; \text{ and } \; d = A(:,r), \nonumber
\end{equation}
and introducing affine function $f : \mathbb{R}^{r-1} \rightarrow \mathbb{R}^{m} : x \rightarrow f(x) = Cx + d$, which is injective since $C$ is full rank (because $A$ is full rank), this polyhedron can be defined as
\begin{equation} \label{Pm}
P = \{ x \in \mathbb{R}^{r-1} 
\;|\; A(:,\textrm{$1$$:$$r$$-$$1$})x + \Big(1-\sum_{i=1}^{r-1} x_i\Big) A(:,r) \geq 0 \} = \{ x \in \mathbb{R}^{r-1} 
\;|\; f(x) \geq 0 \} . 
\end{equation}
Note that $B(\textrm{$1$$:$$r$$-$$1$},j) \in P$ $\forall j$ since $M(:,j) = AB(:,j) = f(B(\textrm{$1$$:$$r$$-$$1$},j)) \geq 0$ $\forall j$. 

Let us show that $P$ is bounded: suppose $P$ is unbounded, then
\begin{eqnarray*}
\exists \, x \in P, \exists \, y \neq 0 \in \mathbb{R}^{r-1}, \forall \alpha \geq 0 
& : & x + \alpha y \in P,\\
&   & \iff C(x + \alpha y) + d = (Cx+d) + \alpha Cy \geq 0.
\end{eqnarray*}
Since $Cx + d \geq 0$, this implies that $Cy \geq 0$.
Observe that columns of $C$ sum to zero (since the columns of $A$ sum to one) so that $Cy$ sums to zero as well; moreover, $C$ is full rank and $y$ is nonzero implying that $Cy$ is nonzero and therefore that $Cy$ must contain at least one negative entry, a contradiction.

Notice that the set $Q$ can be equivalently written as 
\[
Q = \{ u \in \mathbb{R}^{m} \;|\; u = f(x), \; x \in P \}.
\]
Noting $X = [x_1 \, x_2 \, \dots x_k] \in \mathbb{R}^{r-1 \times k}$, $f(X) = [f(x_1) \, f(x_2) \, \dots f(x_k)] =  CX + [d \, d \dots d]$, we finally have
\begin{eqnarray*}
\exists U \in \mathbb{R}^{m \times k}, V \in \mathbb{R}^{k \times n}   \textrm{ column stochastic} & \text{with} &  \rank(U) = \rank(M)   \; \textrm{ and }  M = UV \\
& \iff & \\
\exists x_1, x_2, \dots x_k \in P 
\text{ and } V \in \mathbb{R}^{k \times n} \textrm{ column stochastic}  
& \textrm{s.t.} & M = f(B(\textrm{$1$$:$$r$$-$$1$},:)) = f(X) V = f(XV) \\
&  & \iff 
B(\textrm{$1$$:$$r$$-$$1$},:) = XV.
\end{eqnarray*}
The first equivalence  follows from the above derivations (i.e., $U = f(X)$ for some $x_1, x_2, \dots x_k \in P$);  $f(X)V = f(XV)$ because $V$ is column stochastic (so that $[d \, d \, \dots \, d] V = [d \, d \, \dots \, d]$), and the second equivalence is a consequence from the fact that $f$ is an injection. 

We have then reduced RNR to NPP: find the minimum number $k$ of points $x_i$ in $P$ such that $n$ given points (the columns of $B(\textrm{$1$$:$$r$$-$$1$},:)$ constructed from the columns of $M$, which define the set $S$ in the NPP instance) are contained in the convex hull of these points (since $V$ is column stochastic). 
Because all steps in the above derivation are equivalences, we have actually also defined a reduction from NPP to  RNR; to map a NPP instance to a RNR instance, we take 
\[
M(:,i) = f(s_i) = Cs_i + d \geq 0, \; s_i \in S \quad 1 \leq i \leq n, 
\]
and $\rank(M) = r$ because the $n$ points $s_i$ are not all contained in any hyperplane (they affinely span P). 
\end{proof}
It is worth noting that $M$ would be the slack matrix of $P$ if $S$ was the set of vertices of $P$ (cf.\@ Introduction). This will be useful later in Section~\ref{SM}. 


\subsection{Computational Complexity} \label{CC}

\subsubsection{Rank-Three Matrices} \label{r3m}

Using Theorem~\ref{equiv}, RNR  of a rank-three matrix can be reduced to a two-dimensional nested polytopes problem\footnote{See also Appendix~\ref{2Drep} where a \matlab code is provided.}.
 Therefore, one has to find a convex polygon $T$ with minimum number of vertices nested in between two given convex polygons $S \subset P$. 
This problem has been studied by Aggarwal et al.\@ in \cite{ABOS89}, who proposed an algorithm running in $O(p \log(k))$ operations\footnote{Wang generalized the result for non-convex polygons \cite{W91}. Bhadury and Chandrasekaran propose an algorithm to compute all possible solutions \cite{BC96}.}, where $p$ is the total number of vertices of the given polygons $S$ and $P$, and $k$ is the number of vertices of the minimal nested polygon $T$. 
If $M$ is a $m$-by-$n$ matrix then  $p \leq m+n$ since $S$ has $n$ vertices, and the polygon $P$ is defined by $m$ inequalities so that it has at most $m$ vertices. Moreover $k = \rank_+^*(M) \leq \min(m,n)$ follows from the trivial solutions $T = S$ and $T = P$. 
Finally, we conclude that one can compute the restricted nonnegative rank of a rank-three $m$-by-$n$ matrix in $O\big((m+n) \log(\min(m,n))\big)$ operations. 
\begin{theorem} \label{exactNMFcompl}
For $\rank(M) \leq 3$,  RNR can be solved in polynomial-time. 
\end{theorem}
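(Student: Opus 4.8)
The plan is to derive the result directly from the equivalence established in Theorem~\ref{equiv} together with a known planar computational-geometry algorithm. First I would dispose of the degenerate cases $r=\rank(M)\in\{1,2\}$, which are elementary and already covered in \cite{Tho}: for $r=1$, $M$ is itself a nonnegative rank-one factor so $\rank_+^*(M)=1$; for $r=2$ the reduction below produces a one-dimensional nested-polytopes instance (nesting a segment between two points), which is trivial. So the interesting case is $r=3$.

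Next I would apply Theorem~\ref{equiv} to transform the given RNR instance into an equivalent NPP instance in dimension $r-1=2$: an outer convex polygon $P=\{x\in\mathbb{R}^2 \mid Cx+d\ge 0\}$ (bounded, by the argument in the proof of Theorem~\ref{equiv}) and an inner point set $S\subseteq P$ with $\conv(S)$ full-dimensional, obtained from the columns of $M$. Every operation in that reduction — deleting zero rows and columns, column-normalizing $M$ to be column stochastic, computing a rank-$r$ factorization $M=AB$ by reduction to row-echelon form, normalizing $A,B$ to be column stochastic, and forming $(C,d)$ and the points $s_i=B(1{:}2,i)$ — is polynomial in $m$ and $n$. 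Here $P$ is cut out by at most $m$ inequalities, hence has at most $m$ vertices, and $S$ has $n$ points, so the combined number of vertices $p$ of the two polygons satisfies $p\le m+n$.

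Then I would invoke the algorithm of Aggarwal et al.\ \cite{ABOS89}, which, given two nested convex polygons, computes a convex polygon $T=\conv(x_1,\dots,x_k)$ with $S\subseteq T\subseteq P$ having the minimum possible number $k$ of vertices, in $O(p\log k)$ time. Since $T=\conv(S)$ and $T=P$ are feasible, $k=\rank_+^*(M)\le\min(m,n)$, so this step runs in $O\big((m+n)\log(\min(m,n))\big)$ time. Finally I would translate $T$ back into a restricted nonnegative factorization following the equivalences in the proof of Theorem~\ref{equiv}: with $X=[x_1\,\cdots\,x_k]$, set $U=f(X)=CX+[d\,\cdots\,d]\ge 0$, which has $\rank(U)=r$ because $f$ is injective and the $x_i$ affinely span $\mathbb{R}^{r-1}$; then obtain $V$ column stochastic with $B(1{:}r-1,:)=XV$, i.e.\ by writing each $s_j$ as a convex combination of $x_1,\dots,x_k$ (a linear feasibility problem solvable in polynomial time, e.g.\ by triangulating $T$). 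Undoing the initial column normalization yields the factorization of the original $M$. Summing the polynomial cost of the reduction, the $O\big((m+n)\log(\min(m,n))\big)$ geometric step, and the polynomial recovery of $(U,V)$ gives the claimed polynomial-time algorithm.

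The step needing the most care is not the geometry — that is black-boxed by \cite{ABOS89} — but verifying that the two-way translation of Theorem~\ref{equiv} keeps everything polynomial in the usual Turing sense: one must check that the row-echelon factorization and the vertices produced by the nesting algorithm have polynomially bounded bit-length, and that recovering $V$ from $T$ and $S$ is genuinely polynomial. Once these bookkeeping points are settled, the theorem follows.
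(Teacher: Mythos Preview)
Your proposal is correct and follows essentially the same route as the paper: dispose of $r\le 2$ via \cite{Tho}, then for $r=3$ apply the reduction of Theorem~\ref{equiv} to obtain a two-dimensional NPP instance and solve it with the $O(p\log k)$ algorithm of Aggarwal et al.~\cite{ABOS89}. The paper's own proof is much terser (it simply cites Theorem~\ref{equiv} and \cite{ABOS89}); your additional details on the back-translation to $(U,V)$ and the bit-complexity bookkeeping are reasonable elaborations that the paper leaves implicit.
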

\begin{proof}
Cases $r=1,2$ are trivial since any rank-1 (resp.\@ 2) nonnegative matrix can always be expressed as the sum of 1 (resp.\@ 2) nonnegative factors \cite{Tho}. 

Case $r=3$ follows from Theorem~\ref{equiv} and the polynomial-time algorithm of Aggarwal et al.\@ \cite{ABOS89}. 
\end{proof}

For the sake of completeness, we sketch the main ideas of the algorithm of Aggarwal et al. 
They first make the following observations: (1) any vertex of a solution $T$ can be assumed to belong to the boundary of the polygon $P$ (otherwise it can be projected back on $P$ in order to generate a new solution containing the previous one), (2) any segment whose ends are on the boundary of $P$ and tangent to $S$ (i.e., $S$ is on one side of the segment, and the segment touches $S$) defines a polygon with the boundary of $P$ which must contain a vertex of any feasible solution $T$ (otherwise the tangent point on $S$ could not be contained in $T$), see, e.g., set $Q$ on Figure~\ref{illusAgg} delimited by the segment $[p_1,p_2]$ and the boundary of $P$, and such that $T \cap Q \neq \emptyset$ for any feasible solution $T$. 
\begin{figure*}[ht!]
\begin{center}
\includegraphics[width=6cm]{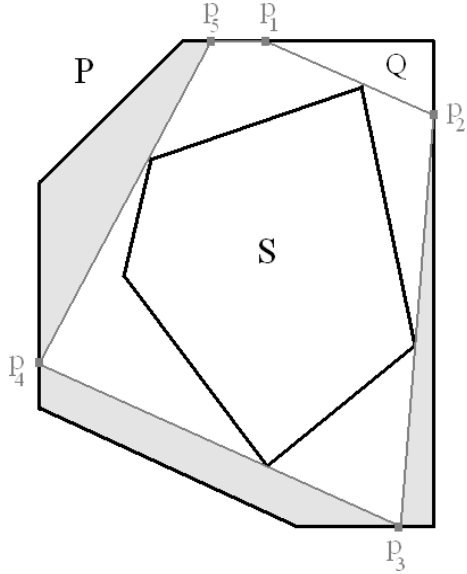}
\caption{Illustration of the algorithm of Aggarwal et al.\@ \cite{ABOS89}.}
\label{illusAgg}
\end{center}
\end{figure*}

Starting from any point $p_1$ of the boundary of $P$, one can trace the tangent to $S$ and hence  obtain the next intersection $p_2$ with $P$. Point $p_2$ is chosen as the next vertex of a solution $T$, and the same procedure is applied (say $k$ times) until the algorithm  can reach the initial point without going through $S$, see Figure~\ref{illusAgg}. This generates a feasible solution $T(p_1) = \conv(\{p_1,p_2, \dots, p_k\})$.  
Because of (1) and (2), this solution has at most one vertex more than an optimal one, i.e., $k \leq \rank_+^*(M) +1$ (since $T$ determines with the boundary of $P$ $k-1$ disjoint polygons tangent to $S$). Moreover, because of (1) and (2), there must exist a vertex of an optimal solution on the boundary of $P$ between $p_1$ and $p_2$. 

The point $p_1$ is then replaced by the so called `contact change points' located on this part of the boundary of $P$ while the corresponding solution $T(p_1)$ is updated using the procedure described above. The contact change points are: (a) the vertices of $P$ between $p_1$ and $p_2$, and (b) the points for which one tangent point of $T(p_1)$ on $S$ is changed when $p_1$ is replaced by them. This (finite) set of points provides a list of candidates where the number of vertices of the solution $T$ could potentially be reduced (i.e., where $p_1$ and $p_k$ could coincide) by replacing $p_1$ by one of these points.  
It is then possible to check whether the current solution can be improved or not, and  guarantee global optimality. In the example of Figure~\ref{illusAgg}, moving $p_1$ on the (only) vertex of $P$ between $p_1$ and $p_2$ generates an optimal solution of this RNR instance (since it reduces the original solution from 5 to 4 vertices).  


\subsubsection{Higher Rank Matrices}

For a rank-four matrix, RNR reduces to a three-dimensional problem of finding a polytope $T$ with the minimum number of vertices nested between two other polytopes $S \subseteq P$. This problem has been studied by Das et al.\@ \cite{DJ90, gau} and  has been shown to be NP-hard when minimizing the number of faces of $T$ (the reduction is from \emph{planar-3SAT}). From this result, one can deduce using a duality argument\footnote{Taking the polar of the three nested polytopes exchanges the roles of the inner and outer polytopes, and transforms face descriptions into vertex descriptions, so that the description of the inner and outer polytopes is unchanged but the intermediate polytope is now described by its vertices.} that minimizing the number of vertices of $T$ is NP-hard as well \cite{DG97,C90}. 
\begin{theorem} \label{complRNR}
For $\rank(M) \geq 4$,  RNR is NP-hard.  
\end{theorem}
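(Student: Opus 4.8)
The plan is to prove NP-hardness in two stages: first for the base rank $r=4$, where Theorem~\ref{equiv} converts RNR into a three-dimensional nested polytopes problem for which a hardness result is available in the literature, and then to bootstrap to every fixed $r>4$ by a polynomial direct-sum reduction from the rank-four case.

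\textbf{Base case $r=4$.} The reduction from NPP to RNR supplied by Theorem~\ref{equiv} sends a three-dimensional NPP instance — outer polytope $P\subset\mathbb{R}^{3}$ given by facets, inner point set $S$ affinely spanning $P$ — to a nonnegative matrix $M$ of rank $r=4$ with $\rank_+^*(M)$ equal to the minimum number of vertices of a polytope $T$ nested between $\conv(S)$ and $P$. So it suffices to recall that this three-dimensional ``minimum nested polytope'' problem is NP-hard. Das et al.~\cite{DJ90,gau} proved NP-hardness of the variant in which one minimizes the number of \emph{facets} of $T$ (by reduction from \emph{planar-3SAT}); taking polars of the three polytopes simultaneously exchanges the inner and outer polytopes, swaps facet and vertex descriptions, and hence turns facet-minimization of $T$ into vertex-minimization of $T$, so the version relevant to us is NP-hard as well~\cite{DG97,C90}. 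Composing this with the NPP-to-RNR reduction of Theorem~\ref{equiv} yields NP-hardness of RNR restricted to rank-four matrices.

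\textbf{From $r=4$ to $r>4$.} I would reduce rank-four RNR to rank-$r$ RNR by padding with an identity block. Given a rank-four instance $M$, put
\[
M' \;=\; \begin{pmatrix} M & 0 \\ 0 & I_{r-4} \end{pmatrix},
\]
which can be formed in polynomial time and has $\rank(M') = \rank(M)+(r-4) = r$. The claim is $\rank_+^*(M') = \rank_+^*(M) + (r-4)$. For ``$\le$'', concatenate an optimal restricted factorization of $M$ with the factorization $I_{r-4}=I_{r-4}\,I_{r-4}$. For ``$\ge$'', take any $M'=U'V'$ with $U',V'\ge 0$ and $\rank(U')=r$. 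Deleting zero columns of $U'$ (with the matching rows of $V'$) and zero rows of $V'$ (with the matching columns of $U'$) changes neither the product nor the value $\rank(U')=r$, since the deleted rank-one terms are zero so $\col(M')$ still lies inside the column space of the truncated $U'$. After this cleanup, nonnegativity forces the two off-diagonal blocks of $U'V'=M'$ to vanish term by term, so each remaining column of $U'$, and the corresponding row of $V'$, is supported entirely within the $M$-block or entirely within the $I_{r-4}$-block. Grouping the two kinds of indices makes $U'$ and $V'$ block diagonal, with diagonal blocks $(U_1,V_1)$ satisfying $M=U_1V_1$ and $(U_2,V_2)$ satisfying $I_{r-4}=U_2V_2$, all nonnegative. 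From these equations $\rank(U_1)\ge\rank(M)$ and $\rank(U_2)\ge r-4$, while $\rank(U_1)+\rank(U_2)=\rank(U')=r=\rank(M)+(r-4)$; hence both inequalities are equalities, so $(U_1,V_1)$ and $(U_2,V_2)$ are \emph{restricted} nonnegative factorizations. Counting columns gives $\rank_+^*(M')\ge\rank_+^*(M)+\rank_+^*(I_{r-4})=\rank_+^*(M)+(r-4)$, which proves the claim (the value $\rank_+^*(I_{r-4})=r-4$ being clear). Consequently any algorithm solving rank-$r$ RNR also solves rank-four RNR, and NP-hardness transfers to every fixed $r\ge 4$.

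\textbf{Main obstacle.} The only genuinely hard ingredient is the cited computational-geometry result of Das et al., which I would use as a black box; the polar-duality step that converts facet-minimization into vertex-minimization is standard. Within the part done here, the delicate point is the block-decomposition in the padding step: one must verify that it is the rank constraint $\rank(U')=r$ — and not merely the matrix identity $M'=U'V'$ — that pins down the column space of each diagonal sub-factor, so that the pieces extracted are \emph{restricted} nonnegative factorizations of $M$ and of $I_{r-4}$ rather than arbitrary nonnegative ones. This is exactly what makes $\rank_+^*$ additive over this particular direct sum and lets the hardness propagate from $r=4$ to all larger fixed ranks.
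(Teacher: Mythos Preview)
Your base case $r=4$ is exactly the paper's argument: invoke the NPP$\to$RNR direction of Theorem~\ref{equiv}, cite Das et al.\ for NP-hardness of the three-dimensional nested-polytope problem (facet version), and pass to the vertex version by polar duality. The paper's own proof is just this, stated in two lines.

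Your padding step for $r>4$ is correct but goes beyond what the paper writes. The paper's proof stops at the citation; read literally, it establishes that RNR restricted to inputs of rank at least $4$ is NP-hard, since the rank-$4$ subfamily already is. You prove the finer statement (which the paper's Table~\ref{complexNN} does claim) that RNR is NP-hard for \emph{each fixed} $r\ge 4$. Your block-diagonal argument is sound: after the cleanup, nonnegativity forces the index set to split, and the rank identity $\rank(U_1)+\rank(U_2)=\rank(U')=r$ together with $\rank(U_i)\ge\rank(\text{block }i)$ pins both sub-factorizations to be \emph{restricted}, which is precisely what makes $\rank_+^*$ additive here. So your proof is a genuine (and clean) strengthening of what the paper actually spells out; the price is the extra page of bookkeeping, which the paper avoids by leaving the fixed-$r$ extension implicit.
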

\begin{proof} 
This is a consequence of Theorem~\ref{equiv} and the NP-hardness results of Das et al.\@ \cite{DJ90, gau, DG97}.
\end{proof}


Note however that several approximation algorithms 
have been proposed in the literature. For example, Mitchell and Suri \cite{MS92} approximate  $\rank_{+}^*(M)$ in case $\rank(M) = 4$ within a $O(\log(p))$ factor, where $p$ is the total number of vertices of the given polygons $S$ and $P$. 
Clarkson \cite{C90} proposes a randomized algorithm finding a polytope $T$ with at most $r_+^* O(5 d \ln(r_+^*))$ vertices and running in $O({r_+^*}^2p^{1+\delta})$ expected time (with $r_+^* = \rank_{+}^*(M)$, $d = \rank(M) - 1$ and $\delta$ is any fixed value $> 0$). 

\subsection{Some Properties}

In this section, we derive some useful properties of the restricted nonnegative rank. 
\begin{example} \label{ex1}
Construct $M$ using the following NPP instance:  $P$ is the three dimensional cube
$P = \{ x \in \mathbb{R}^3 \; | \; 0 \leq x_i \leq 1, \; 1 \leq i \leq 3\}$,
with 6 faces and $S$ is the set of its 8 vertices  
$S = \{ x \in \mathbb{R}^3 \; | \; x_i \in \{ 0, 1\}, \; 1 \leq i \leq 3\}$. 
By construction, the convex hull of $S$ is equal to $P$ and the unique and optimal solution to this NPP instance is $T = P = \conv(S)$ with 8 vertices. By Theorem~\ref{equiv}, the corresponding matrix $M$ of the RNR instance 
\begin{equation} \label{biclique}
M  
 =  
\left( \begin{array}{cccccccc}
     1  &   0  &   1  &   1  &   0  &   0  &   1   &  0 \\
     0  &   1  &   0  &   0  &   1  &   1  &   0   &  1 \\
     1  &   1  &   0  &   1  &   0  &   1  &   0   &  0 \\
     0  &   0  &   1  &   0  &   1  &   0  &   1   &  1 \\ 
     1  &   1  &   1  &   0  &   1  &   0  &   0   &  0 \\
     0  &   0  &   0  &   1  &   0  &   1  &   1   &  1 \\
\end{array} \right) 
\end{equation}
has restricted nonnegative rank equal to 8 (note that its rank is 4 and its nonnegative rank is 6, see Section~\ref{NR}). 
\end{example}

It is well-known that for a matrix $M \in \mathbb{R}^{m \times n}_+$, we have $\rank_+(M) \le \min(m,n)$; surprisingly, this does not hold for the restricted nonnegative rank.
\begin{lemma} \label{weirdo}
For $M \in \mathbb{R}^{m \times n}_+$, 
\[
\rank_+^*(M) \leq n \quad \text{ but } \quad \rank_+^*(M) \nleq m.
\]
\end{lemma}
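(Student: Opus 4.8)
The plan is to establish the two claims separately. The inequality $\rank_+^*(M) \le n$ is the easy direction: writing $M$ itself in the form $M = UV$ with $U = M \in \mathbb{R}^{m\times n}_+$ and $V = I_n$ the $n\times n$ identity (both nonnegative) gives a factorization with $k = n$ and trivially $\col(U) = \col(M)$, so the rank constraint in the definition of $\rank_+^*$ is met. Hence $\rank_+^*(M) \le n$ always. (If one has first normalized $M$ to be column stochastic as in the proof of Theorem~\ref{equiv}, the same factorization still works since $M$ is already column stochastic and $I_n$ is column stochastic.)

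For the second, more surprising claim, the strategy is to exhibit an explicit family of matrices $M \in \mathbb{R}^{m\times n}_+$ with $\rank_+^*(M) > m$. The natural candidate is precisely Example~\ref{ex1}: the $6 \times 8$ matrix of equation~(\ref{biclique}), for which $m = 6$ but $\rank_+^*(M) = 8 > 6$. So the proof reduces to justifying the value $\rank_+^*(M) = 8$ asserted in that example. By Theorem~\ref{equiv}, $\rank_+^*(M)$ equals the minimum number $k$ of points in the outer polytope $P$ (the $3$-cube) whose convex hull contains the inner point set $S$ (the $8$ vertices of the cube). Since $\conv(S) = P$ already, and $P$ is a polytope with exactly $8$ vertices, any nested polytope $T$ with $S \subseteq T \subseteq P$ must satisfy $T = P$, hence must have all $8$ vertices of the cube as its own vertices (each vertex of the cube is a vertex of $\conv(S)$ and therefore cannot be written as a convex combination of other points of $P$). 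Thus the unique feasible $T$ has $8$ vertices, giving $\rank_+^*(M) = 8$.

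To make this self-contained I would spell out the reduction in the direction NPP $\to$ RNR from Theorem~\ref{equiv}: taking $(C,d)$ from the $6$ inequalities $0 \le x_i$ and $0 \le 1 - x_i$ defining the cube, and $S$ the $8$ cube vertices, the associated matrix $M(:,i) = Cs_i + d$ is exactly~(\ref{biclique}) (after the cosmetic column-stochastic normalization), and $\rank(M) = r = 4$ because the $8$ points $s_i$ affinely span $\mathbb{R}^3$. Combining with the previous paragraph yields $\rank_+^*(M) = 8 > 6 = m$, which proves $\rank_+^*(M) \nleq m$ in general, while the first paragraph gives $\rank_+^*(M) \le n$ always.

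The main (and only genuine) obstacle is the lower bound $\rank_+^*(M) \ge 8$ in Example~\ref{ex1}: one must argue carefully that no polytope strictly between $S$ and $P$ can have fewer than $8$ vertices, which here is immediate because $\conv(S) = P$ forces $T = P$ and a cube has $8$ vertices — so even this "obstacle" is light. Everything else (the $M = M\cdot I$ factorization, the dimension count $r = 4$, checking that (\ref{biclique}) really is the slack matrix of the cube) is routine verification.
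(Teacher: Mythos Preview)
Your proposal is correct and follows exactly the same approach as the paper: the trivial factorization $M = M I_n$ for the upper bound, and Example~\ref{ex1} (the slack matrix of the $3$-cube, where $\conv(S)=P$ forces $T=P$ with $8$ vertices) for the failure of $\rank_+^*(M)\le m$. The paper's own proof is terser---it simply cites Example~\ref{ex1}---but your expanded justification of why that example gives $\rank_+^*(M)=8$ matches the reasoning already given in the example itself.
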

\begin{proof}
The first inequality is trivial since $M = M I$ ($I$ being the identity matrix). Example~\ref{ex1} provides an example when $\rank_+^*(M) = 8$ for a 6-by-8 matrix $M$. 
\end{proof}

Lemma~\ref{weirdo} implies that in general $\rank_+^*(M) \neq \rank_+^*(M^T)$, unlike the rank and nonnegative rank \cite{CR93}. Note however that when $\rank(M) \leq 3$, we have 
\[
\rank_+^*(M) \leq \min(m,n),
\] 
because the number of vertices of the outer polygon $P$ in the NPP instance is smaller or equal to its number of facets $m$ in the two-dimensional case or lower, and that the solution $T=P$ is always feasible. \\

\begin{lemma} Let $A \in \mathbb{R}^{m \times n}_+$ and $B \in \mathbb{R}^{m \times r}_+$, then
\[
\rank_+^*([A \, B]) \leq \rank_+^*(A) + \rank_+^*(B).
\]
\end{lemma}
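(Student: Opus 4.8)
The plan is to establish the bound by explicitly combining restricted nonnegative factorizations of $A$ and $B$ into one for $[A\,B]$, and then arguing that the column-space (rank) condition is preserved. Let $r_A = \rank(A)$, $r_B = \rank(B)$, and take optimal restricted factorizations $A = U_A V_A$ with $U_A \in \mathbb{R}^{m\times k_A}_+$, $\rank(U_A) = r_A = \rank(A)$, and $B = U_B V_B$ with $U_B \in \mathbb{R}^{m\times k_B}_+$, $\rank(U_B) = r_B = \rank(B)$, where $k_A = \rank_+^*(A)$ and $k_B = \rank_+^*(B)$. The natural candidate is
\[
[A\,B] \;=\; \bigl[\,U_A \;\; U_B\,\bigr]\begin{pmatrix} V_A & 0 \\ 0 & V_B \end{pmatrix},
\]
which is clearly a nonnegative factorization of $[A\,B]$ with inner dimension $k_A + k_B$, so it remains only to check that $U := [\,U_A\;\;U_B\,]$ has rank exactly $\rank([A\,B])$.

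For the rank condition, I would argue via column spaces. Since $\col(U_A) = \col(A)$ (this is the restricted-rank hypothesis on $A$) and $\col(U_B) = \col(B)$, we get $\col(U) = \col(U_A) + \col(U_B) = \col(A) + \col(B) = \col([A\,B])$, hence $\rank(U) = \rank([A\,B])$. This shows $[A\,B] = UV$ with $U$ satisfying the defining property of a restricted factorization, so $\rank_+^*([A\,B]) \le k_A + k_B = \rank_+^*(A) + \rank_+^*(B)$.

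The one subtlety worth spelling out is the degenerate case where $A$ or $B$ is the zero matrix (or has zero columns/rows): if, say, $B = 0$ then $\rank_+^*(B)$ and $\rank(B)$ are $0$, and one should note the convention that the empty factorization is allowed so the inequality still reads $\rank_+^*([A\,B]) = \rank_+^*(A) \le \rank_+^*(A) + 0$; similarly the block-diagonal construction and the column-space identity $\col([A\,B]) = \col(A)+\col(B)$ both remain valid (with the zero subspace). Handling this boundary case cleanly, together with making sure the $\col(U_A)=\col(A)$ step uses only the hypothesis and not any extra normalization, is really the only place care is needed; the rest is the routine block-matrix computation above. I do not expect any genuine obstacle here — the statement is essentially a bookkeeping consequence of the definition, and the main content is observing that concatenation of the $U$-blocks preserves the "column space equals column space of the target" condition because column space is additive under horizontal concatenation.
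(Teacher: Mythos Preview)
Your proof is correct and follows essentially the same approach as the paper: construct the block factorization $[A\,B] = [U_A\;U_B]\begin{pmatrix} V_A & 0 \\ 0 & V_B\end{pmatrix}$ and verify the rank condition via $\col([U_A\;U_B]) = \col(A)+\col(B) = \col([A\,B])$. Your write-up is in fact slightly more explicit than the paper's, which simply states the column-space identity without spelling out the sum-of-subspaces step or the degenerate case.
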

\begin{proof}
Let $(U_a,V_a)$ and $(U_b,V_b)$ be solutions of RNR for $A$ and $B$ respectively, then 
\[ 
[A \, B] = [U_a \, U_b] \left[ \begin{array}{cc}
     V_a   & 0  \\
     0  &   V_b  \end{array} \right],
\]
and $\rank([U_a \, U_b]) = \rank([A \, B])$ since $\col(U_a) = \col(A)$ and $\col(U_b) = \col(B)$ by definition. 
\end{proof}



\begin{lemma} \label{rMrU} 
Let $M \in \mathbb{R}^{m \times n}_+$ with $\rank(M) = r$ and $\rank_+(M) = r_+$, $U \in \mathbb{R}^{m \times r_+}_+$ and $V \in \mathbb{R}^{r_+ \times n}_+$ with $M = UV$. Then  
\[
r_+ < \rank_+^*(M) \quad \Rightarrow \quad r < \rank(U) \leq r_+ \; \text{ and } \;  r \leq \rank(V) < r_+.
\]
Moreover, if $M$ is symmetric,
\[
r_+ < \rank_+^*(M) \quad \Rightarrow \quad r < \rank(U) < r_+ \; \text{ and } \;  r < \rank(V) < r_+.
\]
\end{lemma}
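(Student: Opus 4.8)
The plan is to combine three elementary ingredients: the trivial rank bounds coming from $M=UV$, the definition of $\rank_+^*$, and Sylvester's rank inequality. Write $\rho_U=\rank(U)$ and $\rho_V=\rank(V)$ for brevity. First I would record the bounds holding unconditionally: since $M=UV$ we have $r=\rank(M)\le\min\{\rho_U,\rho_V\}$, while $\rho_U\le r_+$ and $\rho_V\le r_+$ because $U$ has $r_+$ columns and $V$ has $r_+$ rows. Hence $r\le\rho_U\le r_+$ and $r\le\rho_V\le r_+$ in all cases, and the two strict inequalities to be proved under the hypothesis $r_+<\rank_+^*(M)$ are $\rho_U>r$ and $\rho_V<r_+$.

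The inequality $\rho_U>r$ follows directly from the definition of the restricted nonnegative rank. If $\rho_U=r=\rank(M)$, then $\col(M)\subseteq\col(U)$ (from $M=UV$) together with equality of dimensions forces $\col(U)=\col(M)$, so $(U,V)$ is already an admissible factorization for the RNR problem using only $r_+$ columns; this gives $\rank_+^*(M)\le r_+$, contradicting the hypothesis. Combined with the unconditional bound this yields $r<\rho_U\le r_+$.

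The inequality $\rho_V<r_+$ then comes from Sylvester's rank inequality applied to $M=UV$ with inner dimension $r_+$, namely $\rho_U+\rho_V-r_+\le\rank(UV)=r$: since $\rho_U\ge r+1$, we get $\rho_V\le r+r_+-\rho_U\le r_+-1$. Together with $\rho_V\ge r$ this establishes $r\le\rho_V<r_+$ and completes the first implication.

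For the symmetric refinement the key observation is that $M=M^{T}$ yields the factorization $M=V^{T}U^{T}$, in which $(V^{T},U^{T})$ is again a nonnegative rank-$r_+$ factorization of $M$, while $\rank(M)=r$, $\rank_+(M)=r_+$ and $\rank_+^*(M)$ are all unchanged under transposition. Applying the first implication to this swapped factorization gives $\rank(V^{T})>r$ and $\rank(U^{T})<r_+$, i.e.\ $\rho_V>r$ and $\rho_U<r_+$; intersecting these with the bounds already obtained produces $r<\rho_U<r_+$ and $r<\rho_V<r_+$. I do not anticipate a real obstacle: the only points needing a little care are invoking the definition of $\rank_+^*$ correctly (the step $\col(M)\subseteq\col(U)$ plus equal dimensions $\Rightarrow\col(U)=\col(M)$) and applying Sylvester's inequality with the correct inner dimension $r_+$; once the transpose trick is noticed, the symmetric case is essentially immediate.
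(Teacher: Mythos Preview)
Your argument is correct and follows the same overall structure as the paper's proof: establish the trivial bounds, rule out $\rank(U)=r$ via the definition of $\rank_+^*$, rule out $\rank(V)=r_+$, and handle the symmetric case with the transpose factorization $M=V^TU^T$. The only difference is in the third step: the paper argues that if $\rank(V)=r_+$ then $V$ admits a right inverse $V^\dag$, whence $U=MV^\dag$ has rank at most $r$, contradicting $\rank(U)>r$; you instead obtain $\rho_V\le r_+-1$ directly from Sylvester's inequality $\rho_U+\rho_V-r_+\le r$ combined with $\rho_U\ge r+1$, which is a slightly cleaner route to the same conclusion.
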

\begin{proof} 
Clearly,
\[
r \leq \rank(U) \leq r_+ \quad \text{ and } \quad r \leq \rank(V) \leq r_+.
\]
If $\rank(U) = r$, we would have $\rank_+^*(M) =  r_+$ which is a contradiction, and $\rank(V) =  r_+$ would imply that $V$ has a right pseudo-inverse $V^{\dag}$, so that we could write $U = MV^{\dag}$ and then $r \leq \rank(U) \leq \min(r,\rank(V^{\dag})) \leq  r$, a contradiction for the same reason. 

In case $M$ is symmetric, to show that $\rank(U) <  r_+$ and $\rank(V) > r$, we use symmetry and observe that 
$UV = M = M^T = V^T U^T$.
\end{proof}

\begin{corollary} \label{nr5}
Given a nonnegative matrix $M$, 
\[
\rank_+^*(M) \leq \rank(M)+1 \quad \Rightarrow \quad \rank_+(M) = \rank_+^*(M).
\]
If $M$ is symmetric,
\[
\rank_+^*(M) \leq \rank(M)+2 \quad \Rightarrow \quad \rank_+(M) = \rank_+^*(M).
\]
\end{corollary}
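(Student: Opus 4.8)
The plan is to argue by contradiction and simply unwind the rank inequalities provided by Lemma~\ref{rMrU}. Write $r = \rank(M)$ and $r_+ = \rank_+(M)$, and recall that $r_+ \leq \rank_+^*(M)$ always holds and that all three quantities are finite integers (e.g.\ $\rank_+^*(M) \leq n$ by Lemma~\ref{weirdo}). Since the conclusion $\rank_+(M) = \rank_+^*(M)$ amounts to ruling out the strict inequality $r_+ < \rank_+^*(M)$, it suffices to show that each hypothesis is incompatible with $r_+ < \rank_+^*(M)$.

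For the general case, I would assume $\rank_+^*(M) \leq r+1$ and suppose, for contradiction, that $r_+ < \rank_+^*(M)$. Fix any rank-$r_+$ nonnegative factorization $M = UV$ with $U \in \mathbb{R}^{m\times r_+}_+$ and $V \in \mathbb{R}^{r_+\times n}_+$. Because the hypothesis $r_+ < \rank_+^*(M)$ of Lemma~\ref{rMrU} is now in force, that lemma gives $r < \rank(U) \leq r_+$, so the integrality of $\rank(U)$ yields $r_+ \geq \rank(U) \geq r+1 \geq \rank_+^*(M)$, contradicting $r_+ < \rank_+^*(M)$. Hence $r_+ = \rank_+^*(M)$. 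The symmetric case is handled by the same scheme, but now using the sharper conclusion of Lemma~\ref{rMrU}: assuming $\rank_+^*(M) \leq r+2$ and $r_+ < \rank_+^*(M)$, the lemma (symmetric part) gives $r < \rank(U) < r_+$, so $r_+ \geq \rank(U)+1 \geq r+2 \geq \rank_+^*(M)$, again contradicting $r_+ < \rank_+^*(M)$.

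There is essentially no genuine obstacle: the corollary is a direct numerical consequence of Lemma~\ref{rMrU}. The only points deserving a line of care are (i) that Lemma~\ref{rMrU}'s hypothesis $r_+ < \rank_+^*(M)$ is precisely the assumption made for contradiction, so the lemma is legitimately applicable; and (ii) the borderline situation $r_+ = \rank_+^*(M)$ is not covered by the lemma, but this is exactly the desired conclusion, so no further argument is needed there.
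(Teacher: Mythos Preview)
Your proof is correct and follows essentially the same approach as the paper: both argue by contradiction, assume $r_+ < \rank_+^*(M)$, invoke Lemma~\ref{rMrU} to obtain the chain $r < \rank(U) \leq r_+ < \rank_+^*(M)$ (respectively $r < \rank(U) < r_+ < \rank_+^*(M)$ in the symmetric case), and observe that this chain of integers is incompatible with $\rank_+^*(M) \leq r+1$ (respectively $\leq r+2$). Your write-up is slightly more explicit about the role of integrality, but the argument is the same.
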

\begin{proof}
Let $r = \rank(M)$, $r_+ = \rank_+(M)$, and $U \in \mathbb{R}^{m \times r_+}_+$ and $V \in \mathbb{R}^{r_+ \times n}_+$ such that $M = UV$. If $r_+ < \rank_+^*(M)$, by Lemma~\ref{rMrU}, we  have
\[
r < \rank(U) \leq r_+ < \rank_+^*(M),
\]
which is a contradiction if $\rank_+^*(M) \leq r+1$. If $M$ is symmetric, we have $\rank(U) < r_+$ and the above equation is a contradiction if $\rank_+^*(M) \leq r+2$.
\end{proof}

For example, this implies that to find a symmetric rank-three nonnegative matrix with $\rank_+(M) < \rank_+^*(M)$, we need $\rank_+^*(M) > \rank(M) + 2 = 5$ and therefore have to consider matrices of size at least 6-by-6 with $\rank_+^*(M) = 6$. 
\begin{example} \label{ex2}
Let us consider the following matrix $M$ and the rank-5 nonnegative factorization $(U,V)$, 
\begin{equation} 
M   =   
\left( \begin{array}{cccccc}
     0   &  1  &   4   & 9   & 16 &   25\\
     1  &   0  &   1 &    4   &  9  &  16\\
     4   &  1  &   0 &    1   &  4  &   9\\
     9  &   4  &   1  &   0   &  1  &   4\\
    16   &  9   &  4  &   1   &  0  &   1\\
    25  &  16   &  9  &   4    & 1  &   0
\end{array} \right) = UV, 
U =    \left( \begin{array}{ccccc}
		  5     &  0  &   4  &   0    &  1 \\
      3     &  0  &   1  &  1    &  0\\
      1     &  0   &  0   & 4     & 1\\
      0     &  1   &  0  &  4    &  1\\
      0      &  3  &   1  & 1     &  0\\
      0     &  5  &   4  &  0    &  1
      \end{array} \right), 
V =   \left( \begin{array}{cccccc}
        0   &  0   &  0    & 1    & 3   &  5\\
         5  &   3  &   1 &    0  &   0&     0\\
         0  &   0  &   1  &   1   &  0  &   0\\
          1 &    0 &    0 &    0  &   0  &   1\\
         0   &  1  &   0  &   0   &  1   &  0    \end{array} \right). \nonumber
\end{equation}
One can check that $\rank(M) = 3$, and, using the algorithm of Aggarwal et al.\@ \cite{ABOS89}, 
the restricted nonnegative rank can be computed\footnote{The problem is actually trivial because each vertex of the inner polygon $S$ is located on a different edge of the polygon $P$, so that they define with the boundary of $P$ $6$ disjoint polygons tangent to $S$. This implies that $\rank_+^*(M) = 6$, cf.\@ Section~\ref{r3m}. This matrix is actually a linear Euclidean distance matrix which will be analyzed later in  Section~\ref{euclid}.} and is equal to 6.  Using the above decomposition, it is clear that $\rank_+(M) \leq 5 < \rank_+^*(M) = 6$. 
 By Lemma~\ref{rMrU}, for any $\rank_+(M)$-nonnegative factorization $(U,V)$ of $M$, we then must have  $3 < \rank(U) = 4 < \rank_+(M)$ implying  that $\rank_+(M) = 5$.   
\end{example} 

As we have already seen with Lemma~\ref{weirdo}, the restricted nonnegative rank does not share all the nice properties of the rank and the nonnegative rank functions \cite{CR93}. The next two lemmas exploit Example~\ref{ex2} further to show different behavior between nonnegative rank and restricted nonnegative rank. 
\begin{lemma} 
Let $A \in \mathbb{R}^{m \times n}_+$ and $B \in \mathbb{R}^{m \times r}_+$, then 
\[
\rank_+^*(A+B) \nleq \rank_+^*(A) + \rank_+^*(B).
\]
\end{lemma}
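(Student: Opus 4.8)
The plan is to build an explicit counterexample from the matrix $M$ of Example~\ref{ex2} (with $\rank(M)=3$ and $\rank_+^*(M)=6$) together with its displayed rank-$5$ nonnegative factorization $M=UV$, where $U\in\mathbb{R}^{6\times 5}_+$ and $V\in\mathbb{R}^{5\times 6}_+$. First I would cut this factorization into two blocks: let $U_1$ consist of the first three columns of $U$ and $U_2$ of the last two, let $V_1$ consist of the first three rows of $V$ and $V_2$ of the last two, and set $A:=U_1V_1$ and $B:=U_2V_2$. Then $A+B=U_1V_1+U_2V_2=UV=M$ by block multiplication, and $A,B\ge 0$ as products of nonnegative matrices, so $(A,B)$ is a legitimate pair against which to test subadditivity.

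The next step is to observe that each piece already comes equipped with a \emph{restricted} nonnegative factorization, so that both $\rank_+^*(A)$ and $\rank_+^*(B)$ are small. For $A$ this reduces to checking that $\rank(U_1)=\rank(A)$: since $U_1$ has three columns, $\rank(A)\le\rank(U_1)\le 3$, and a direct inspection of the explicit matrices of Example~\ref{ex2} shows that $U_1$ has full column rank $3$ while $V_1$ has full row rank $3$, so that $\col(A)=\col(U_1V_1)=\col(U_1)$ is $3$-dimensional, i.e.\ $\rank(A)=3=\rank(U_1)$. Hence $(U_1,V_1)$ is an admissible restricted nonnegative factorization of $A$, giving $\rank_+^*(A)\le 3$. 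The identical reasoning applied to the two-column block gives $\rank(U_2)=\rank(V_2)=\rank(B)=2$, hence $\rank_+^*(B)\le 2$. Combining the two estimates yields
\[
\rank_+^*(A)+\rank_+^*(B)\le 3+2=5<6=\rank_+^*(M)=\rank_+^*(A+B),
\]
which is exactly the asserted failure of subadditivity. I would round this off with one sentence contrasting it with the earlier concatenation lemma and with the (trivial) subadditivity of $\rank_+$ itself: stacking the two sub-factorizations rebuilds $M=UV$ from $U=[U_1\,U_2]$ and the corresponding row-partition of $V$, but $[U_1\,U_2]=U$ has rank $4\neq\rank(M)$ (as already noted in Example~\ref{ex2}), so it is not a valid restricted factorization of the sum, and this mismatch is precisely what the lemma records.

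I expect the only delicate point -- the one worth spelling out -- to be the rank bookkeeping for the two blocks: one must confirm $\rank(U_i)=\rank(U_iV_i)$ for the chosen partition, so that the sub-factorizations certify $\rank_+^*(A)\le 3$ and $\rank_+^*(B)\le 2$ rather than merely $\rank_+(A)\le 3$ and $\rank_+(B)\le 2$ (which would be useless, since nonnegative rank is trivially subadditive under sums). This is a routine check on the explicit $6\times 5$ and $5\times 6$ matrices of Example~\ref{ex2}, and any partition of the five columns of $U$ into two nonempty groups whose induced column-submatrices of $U$ keep full column rank -- together with the matching partition of the rows of $V$ into groups of full row rank -- would serve equally well, so the construction is robust.
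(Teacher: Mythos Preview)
Your proposal is correct and is essentially identical to the paper's own proof: the paper takes the same matrix $M$ from Example~\ref{ex2}, the same $3+2$ split $A=U(:,1\!:\!3)V(1\!:\!3,:)$ and $B=U(:,4\!:\!5)V(4\!:\!5,:)$, and concludes $\rank_+^*(A)=3$, $\rank_+^*(B)=2$, $\rank_+^*(A+B)=6$. The only difference is cosmetic: the paper dispatches the two rank claims in a single parenthetical each (``since $\rank(A)=3$'' and ``trivial''), whereas you spell out the verification that $\rank(U_i)=\rank(U_iV_i)$ and add the closing commentary about why stacking the blocks fails to give a restricted factorization of the sum.
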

\begin{proof}
Take $M$, $U$ and $V$ from Example~\ref{ex2} and construct $A = U(:,1$:$3)V(1$:$3,:)$ with $\rank_+^*(A) = 3$ (since $\rank(A) = 3$), $B = U(:,4$:$5)V(4$:$5,:)$ with $\rank_+^*(B) = 2$ (trivial) and $\rank_+^*(A+B) = 6$ since $A+B = M$. 
\end{proof}

\begin{lemma} 
Let $A \in \mathbb{R}^{m \times n}_+$ and $B \in \mathbb{R}^{m \times r}_+$, then
\[
\rank_+^*([A \, B]) \ngeq \rank_+^*(A).
\]
where $[A \, B] \in \mathbb{R}^{m \times (n+r)}_+$ denotes the concatenation of the columns of $A$ and $B$. 
\end{lemma}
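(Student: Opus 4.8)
The plan is to produce an explicit counter-example built from the matrices $M$, $U$, $V$ of Example~\ref{ex2}. The guiding idea is that appending to $M$ the columns of one of its nonnegative factors enlarges the column space (here, up to $\col(U)$), which increases the rank and thereby makes the rank constraint of RNR compatible with a factorization using \emph{fewer} than $\rank_+^*(M)$ nonnegative factors; concatenation can thus strictly \emph{decrease} the restricted nonnegative rank, which is exactly what the statement asserts.

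Concretely, I would take $A = M$ and $B = U$, so that $[A\,B] = [M\;\, U] \in \mathbb{R}^{6\times 11}_+$. First I would check that $\rank([M\;\,U]) = 4$: since $M = UV$ we have $\col(M)\subseteq\col(U)$, hence $\col([M\;\,U]) = \col(U)$, and $\rank(U) = 4$ was already established in Example~\ref{ex2}. Then I would write $[M\;\,U] = [UV\;\, U] = U\,[\,V\;\; I\,]$ with $I$ the $5\times 5$ identity matrix; both factors are nonnegative and $\rank(U) = 4 = \rank([M\;\,U])$, so $(U,[\,V\;\;I\,])$ is a feasible RNR solution with $5$ factors, giving $\rank_+^*([M\;\,U]) \le 5$. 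Since $\rank_+^*(A) = \rank_+^*(M) = 6$ (computed in Example~\ref{ex2}), we obtain $\rank_+^*([A\,B]) \le 5 < 6 = \rank_+^*(A)$, which contradicts the inequality $\rank_+^*([A\,B]) \ge \rank_+^*(A)$.

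The only step calling for thought is the choice of the second block $B$: one wants $\col([A\,B])$ to jump precisely to the column space of some narrow nonnegative factor of $A$, and taking $B = U$ both accomplishes this and makes the identity $[M\;\,U] = U[\,V\;\;I\,]$ immediate; everything else (nonnegativity, the rank equality, recalling $\rank_+^*(M) = 6$ and $\rank(U) = 4$ from Example~\ref{ex2}) is routine. As a bonus one can note that the inequality is in fact an equality, $\rank_+^*([M\;\,U]) = 5$, since nonnegative rank cannot decrease when columns are appended, so $\rank_+^*([M\;\,U]) \ge \rank_+([M\;\,U]) \ge \rank_+(M) = 5$.
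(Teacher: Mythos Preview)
Your proof is correct and follows essentially the same approach as the paper: both take $A = M$ from Example~\ref{ex2} and append columns of $U$ so that the column space grows to $\col(U)$, making $(U,[V\;\cdot\,])$ a valid RNR factorization with $5$ factors while $\rank_+^*(M)=6$. The only cosmetic difference is that the paper appends just the single column $B = U(:,1)$ (writing $[A\,B] = U[V\; e_1]$), whereas you append all of $U$; your version avoids checking that one column suffices to bump the rank to $4$, at the cost of a slightly larger $B$.
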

\begin{proof}
Let us take $M$, $U$ and $V$ from Example~\ref{ex2}, and construct $A = M$ and $B = U(:,1)$ 
with $\rank_+^*([A \, B]) \leq 5$ since $\rank([A \, B]) = 4$ (this can be checked easily) and $[A \, B] = U [V \, e_1]$ with $\rank(U) = 4$ (where $e_i$ denotes the $i^{\text{th}}$ column of the identity matrix of appropriate dimension). 
\end{proof}

\begin{lemma} Let $B \in \mathbb{R}^{m \times r}_+$ and $C \in \mathbb{R}^{r \times n}_+$, then
\[
\rank_+^*(BC) \nleq \min(\rank_+^*(B),\rank_+^*(C)).
\]
\end{lemma}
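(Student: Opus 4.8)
The plan is to exhibit a single counterexample, reusing the data of Example~\ref{ex2}. Recall the factorization $M = UV$ there, with $\rank(M) = 3$, $U \in \mathbb{R}^{6 \times 5}_+$, $V \in \mathbb{R}^{5 \times 6}_+$, $\rank(U) = 4$, and $\rank_+^*(M) = 6$. Set $B = U$ and $C = V$, so that $BC = M$. Then $\rank_+^*(BC) = \rank_+^*(M) = 6$, while I would argue that both $\rank_+^*(B)$ and $\rank_+^*(C)$ are at most $5$, which already gives $6 = \rank_+^*(BC) \nleq 5 \geq \min(\rank_+^*(B), \rank_+^*(C))$.

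First I would bound $\rank_+^*(C) = \rank_+^*(V)$. Since $V$ is a $5 \times 6$ nonnegative matrix, Lemma~\ref{weirdo} gives $\rank_+^*(V) \leq 6$; but one checks directly that $\rank(V) = 4$ (it has $5$ rows, and its row space has dimension $4$ — this is forced since $\col(V) \supseteq$ row space constraints coming from $M = UV$ with $\rank U = 4$, or simply by computing the rank of $V$). Then by the trivial bound $\rank_+^*(V) \leq \min(m,n)$ restricted to the relevant dimension, $\rank_+^*(V) \leq 5$ because $V$ has $5$ rows: indeed $V = I_5 V$ is a valid restricted factorization with $\rank(I_5) = 5 \geq \rank(V)$, wait — the restricted rank requires $\rank$ of the left factor to equal $\rank(V)$, so instead I use $\rank_+^*(V) \leq n = 6$ directly from $V = V I_6$... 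Let me instead bound it more carefully: by Lemma~\ref{weirdo}, $\rank_+^*(V) \leq 6$, and that already suffices if I can show $\rank_+^*(U) \leq 5$.

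For $\rank_+^*(B) = \rank_+^*(U)$: $U$ is $6 \times 5$ nonnegative with $\rank(U) = 4$. Since $U = U I_5$ and... again the first inequality of Lemma~\ref{weirdo} gives only $\rank_+^*(U) \leq 5$ (the number of columns). So $\min(\rank_+^*(U), \rank_+^*(V)) \leq 5 < 6 = \rank_+^*(BC)$, which is exactly the desired strict failure of the inequality. Thus the core of the proof is: (i) verify the numerical claims $\rank(M)=3$, $\rank(U)=4$, $M = UV$, and $\rank_+^*(M) = 6$ — all already established in Example~\ref{ex2}; (ii) invoke $\rank_+^*(U) \leq 5$ from the trivial column bound in Lemma~\ref{weirdo}; (iii) conclude. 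The main (only) obstacle is making sure the reader accepts that $\rank_+^*(M) = 6$ genuinely exceeds $\min(\rank_+^*(U), \rank_+^*(V))$; since the latter is at most the smaller dimension $5$ of $U$ by the elementary bound, no further computation is needed. The proof is therefore essentially a one-line appeal to Example~\ref{ex2}:

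\begin{proof}
Take $M = UV$ from Example~\ref{ex2}, and set $B = U \in \mathbb{R}^{6 \times 5}_+$, $C = V \in \mathbb{R}^{5 \times 6}_+$. Then $BC = M$, so $\rank_+^*(BC) = \rank_+^*(M) = 6$. On the other hand, $\rank_+^*(B) \leq 5$ (the number of columns of $U$, by the trivial bound $M = M I$ of Lemma~\ref{weirdo}), hence $\min(\rank_+^*(B),\rank_+^*(C)) \leq 5 < 6 = \rank_+^*(BC)$.
\end{proof}
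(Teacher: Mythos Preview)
Your final proof is correct and follows exactly the paper's argument: take $B=U$, $C=V$ from Example~\ref{ex2}, so that $\rank_+^*(BC)=\rank_+^*(M)=6$ while $\rank_+^*(U)\le 5$ by the column bound of Lemma~\ref{weirdo}, hence $\min(\rank_+^*(B),\rank_+^*(C))\le 5<6$. The exploratory discussion preceding your formal proof contains an unnecessary detour (you do not need any bound on $\rank_+^*(V)$, since bounding $\rank_+^*(U)$ alone already bounds the minimum), but the boxed proof itself is clean and matches the paper.
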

\begin{proof}
See Example~\ref{ex2} in which $\rank_+^*(M) = 6$ and $\rank_+^*(U) \leq 5$ by Lemma~\ref{weirdo}. 
\end{proof}

\section{Lower Bounds for the Nonnegative Rank} \label{NR}

In this section, we provide new lower bounds for the nonnegative rank based on the restricted nonnegative rank. Recall that the restricted nonnegative rank already provides an upper bound for the nonnegative rank since for a $m$-by-$n$ nonnegative matrix $M$, 
\begin{equation}
0 \leq \rank(M) \leq \rank_+(M) \leq \rank_{+}^*(M) \leq n. 
\end{equation} 
Notice that this bound can only be computed efficiently in the case $\rank(M) = 3$ (see Theorems~\ref{exactNMFcompl} and \ref{complRNR}). 

As mentioned in the introduction, it might also be interesting to compute lower bounds on the nonnegative rank. 
Some work has already been done in this direction, including the following
\begin{enumerate}
%
%

\item Let $M \in \mathbb{R}^{m \times n}_+$ be any weighted biadjacency matrix of a bipartite graph $G = (V_1 \cup V_2, E \subset V_1 \times V_2)$ with $M(i,j) > 0 \iff (V_1(i),V_2(j)) \in E$. 
A biclique of $G$ is a complete bipartite subgraph (it corresponds to a positive rectangular submatrix of $M$). One can easily check that each rank-one factor $(U_{:k},V_{k:})$ of any rank-$k$ nonnegative factorization $(U,V)$ of $M$ can be interpreted as a biclique of $M$ (i.e., as a positive rectangular submatrix) since $M = \sum_{i=1}^k U_{:i}V_{i:}$. 
Moreover, these bicliques $(U_{:k},V_{k:})$ must cover $G$ completely since $M = UV$. The minimum number of bicliques needed to cover $G$ is then a lower bound for the nonnegative rank. It is called the biclique partition number and denoted $b(G)$, see \cite{W06} and references therein. Its computation is NP-complete  \cite{O77} and is directly related to the minimum biclique cover problem (MBC).\\ 
Consider for example the matrix $M$ from Example~\ref{ex1}. The largest biclique of the graph $G$ generated by $M$ has 4 edges\footnote{This can be computed explicitly, e.g., with a brute force approach. Note however that finding the biclique with the maximum number of edges is a combinatorial NP-hard optimization problem \cite{Peet}. It is closely related to a variant of the approximate nonnegative factorization problem \cite{GG08}.}. Since $G$ has 24 edges, we have $b(G) \geq \frac{24}{4}=6$ and therefore $6 \leq \rank_+(M) \leq \min(m,n)=6$.

A crown graph $G$ is a bipartite graph with $|V_1|= |V_2| = n$ and $E = \{ (V_1(i),V_2(j)) \, | \, i \neq j\}$ (it can be viewed as a biclique where the horizontal edges have been removed). 
de Caen, Gregory and Pullman \cite{CGP81} showed that
\[
b(G) = \min_k \Big\{ k \;|\; n \leq \binom{k}{\lfloor k/2 \rfloor} \Big\} = \mathcal{O}(\log n) .
\]

Beasley and Laffey \cite{BL09}  studied linear Euclidean distance matrices defined as $M(i,j) = (a_i-a_j)^2$ for $1 \leq i, j \leq n$, $a_i \in \mathbb{R}$, $a_i \neq a_j$ $i \neq j$. They proved that such matrices have rank three and that
\begin{equation} \label{beas}
\min_k \Big\{ k \;|\; n \leq \binom{k}{\lfloor k/2 \rfloor} \Big\} \leq r_+
\quad \text{ which means } \quad
n \leq 
\binom{r_+}{\lfloor r_+/2 \rfloor}, 
\end{equation}
where $r_+ = \rank_+(M)$. In fact, such matrices are biadjacency matrices of crown graphs (only the  diagonal entries are equal to zero). 

\item Goemans makes \cite{G09} the following observation: the product $UV$ of two nonnegative matrices $U \in \mathbb{R}^{m \times k}_+$ and $V \in \mathbb{R}^{k \times n}_+$ generates a matrix $M$ with at most $2^k$ columns (resp.\@ $2^k$ rows) with different sparsity patterns. In fact, the columns (resp.\@ rows) of $M$ are additive linear combinations of the $k$ columns of $U$ (resp.\@ rows of $V$) and therefore no more than $2^k$ sparsity patterns can be generated from these columns (resp.\@ rows). Therefore, 
letting $s_p$ be the maximum between the number of columns and rows of $M \in \mathbb{R}^{m \times n}_+$ having a different sparsity pattern, we have  
\[
\rank_+(M) \geq \log_2(s_p).
\]
In particular, if all the columns and rows of $M$ have a different sparsity pattern, then
\begin{equation} \label{goem}
\rank_+(M) \geq \log_2(\max(m,n)).
\end{equation}
Goemans then uses this result to show that any extended formulation of the permutahedron in dimension $n$ must have $\Omega(n\log(n))$ variables and constraints. In fact, 
\begin{itemize}
\item The minimal size $s$ is of the order of the nonnnegative rank of its slack matrix plus $n$ (cf.\@ Introduction).
\item The slack matrix has $n!$ columns (corresponding to each vertex of the polytope) with different sparsity patterns (cf.\@ Equation~\eqref{SMdef}).
\end{itemize}
This implies that 
\[
s = \Theta(\rank_+(S_M) + n) \geq \Theta(\log(n!)) = \Theta(n\log(n)).
\]
\end{enumerate}

In this section, we provide some theoretical results linking the restricted nonnegative rank with the nonnegative rank, which allow us to improve and generalize the above results in Section~\ref{illus} for both slack and linear Euclidean distance matrices.

\subsection{Geometric Interpretation of a Nonnegative Factorization as a Nested Polytopes Problem} \label{geoNN}

In the following, we lay the groundwork for the main results of this paper, introducing essential notations and observations that will be extensively used in this section. We rely on the geometric interpretation of the nonnegative rank, see also \cite{DS03, CL08, Vav} where similar results are presented.  
The main observation is that any rank-$k$ nonnegative factorization $(U,V)$ of a nonnegative matrix $M$ can be interpreted as the solution with $k$ vertices of a nested polytopes problem in which the inner polytope has dimension $\rank(M)-1$ and the outer polytope has dimension $\rank(U)-1$. \\

Without loss of generality, let $M \in \mathbb{R}^{m \times n}_+$, $U \in \mathbb{R}^{m \times k}_+$ and $V \in \mathbb{R}^{k \times n}_+$ be column stochastic with $M = UV$ (cf.\@ proof of Theorem~\ref{equiv}, the columns of $M$ are convex combination of the columns of $U$).  
If the column space of $U$ does not coincide with the column space of $M$, i.e., $r_u = \rank(U) > \rank(M) = r$, it means that the columns of $U$ belong to a higher dimensional affine subspace containing the columns of $M$ (otherwise, see Theorem~\ref{equiv}). 

Let factorize $U = AB$ where $A \in \mathbb{R}^{m \times r_u}$ and $B \in \mathbb{R}^{r_u \times r_+}$ are full rank and their columns sum to one. As in Theorem~\ref{equiv}, we can construct the polytope of the coefficients of the linear combinations of the columns of $A$ that generate stochastic vectors. It is defined as
\[
P_u =  \{ x \in \mathbb{R}^{r_u-1} 
\;|\; f_u(x) = A(:,\textrm{$1$$:$$r_u$$-$$1$})x + \Big(1-\sum_{i=1}^{r_u-1} x_i\Big) A(:,r_u) \geq 0\}.
\]

Since $\col(M) \subseteq \col(U)$, there exists $B' \in \mathbb{R}^{r_u \times n}$ whose columns must sum to one such that $M = AB'$. Since $\rank(M) = r$ and $A$ is full rank, we must have $\rank(B') = r$. By construction, the columns of $B_u = B(\textrm{$1$$:$$r_u$$-$$1$},:)$ (corresponding to the columns of $U$) and $B_m = B'(\textrm{$1$$:$$r_u$$-$$1$},:)$ (corresponding to the columns of $M$) belong to $P_u$. 
Note that since $\rank(B') = r$, the columns of $B_m$ live in a lower ($r-1$)-dimensional polytope
\[
P_m =  \{ x \in \mathbb{R}^{r_u-1} 
\;|\; f_u(x) \geq 0,  f_u(x) \in \col(M)   \} \quad \subseteq \quad P_u.
\]
Polytope $P_m$ contains the points in $P_u$ generating vectors in the column space of $M$. 

Moreover
\[
M = A B' = U V = AB V,
\]
implying that (since $A$ is full rank) 
\[
B' = B V \quad \text{ and } \quad  B_m = B_u V. 
\]
Finally, the columns of $B_m$ are contained in the convex hull of the columns of $B_u$, inside $P_u$, i.e., 
\[
\conv(B_m) \; \subseteq \; \conv(B_u) \; \subseteq \; P_u.
\] 
Defining the polytope $T$ as the convex hull of the columns of $B_u$, and the set of points $S$ as the columns of $B_m$, we can then interpret the nonnegative factorization $(U,V)$ of $M$ as follows. The $(r_u-1)$-dimensional polytope $T$ with $k$ vertices (corresponding to the columns of $U$) is nested between a inner $(r-1)$-dimensional polytope $\conv(S)$ (where each point in $S$ corresponds to a column of $M$) and a outer $(r_u-1)$-dimensional polytope $P_u$.

Let us use the matrix $M$ and its nonnegative factorization $(U,V)$ of Example~\ref{ex2} as an illustration:  $\rank(M) = 3$ so that $P_m$ is a two-dimensional polytope and contains the set of points $S$, while $\rank(U) = 4$ and defines a three-dimensional polytope $T$ containing $S$, see Figure~\ref{euc3}. 
\begin{figure*}[ht!] 
\begin{center}
\includegraphics[width=13cm]{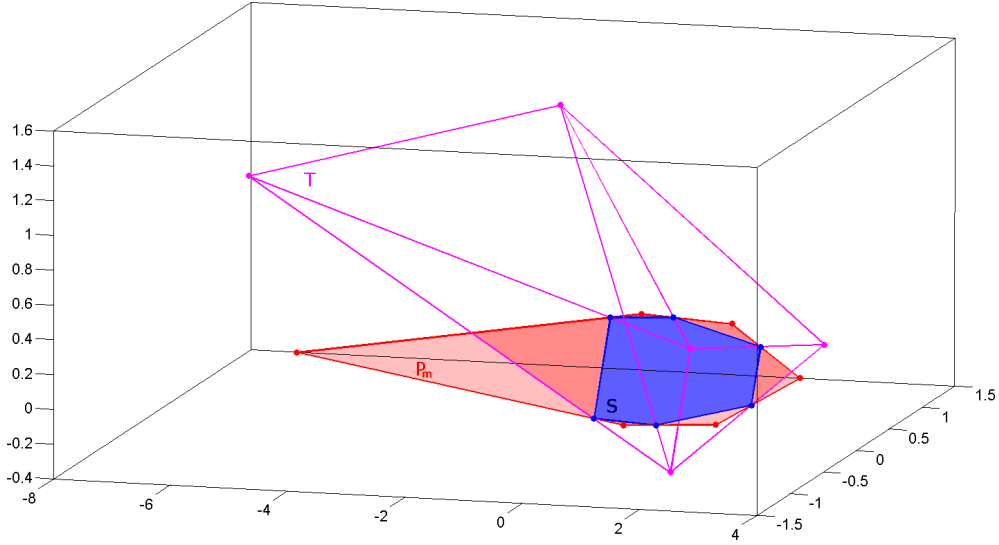}
\caption{Illustration of the solution from  Example~\ref{ex2} as a nested polytopes problem, with $\rank(M) = 3 < \rank(U) = 4 < \rank_+(M) = 5 < \rank_+^*(M) = 6 = n$. See Appendix~\ref{3Drep} for the code used to perform the reduction.}
\label{euc3}
\end{center}
\end{figure*}



\subsection{Upper Bound for the Restricted Nonnegative Rank}

From the geometric interpretation introduced in the previous paragraph, we can now give the main result of this section. 
The idea is the following: using notations of Section~\ref{geoNN}, we know that (1) the polytope $T$ (whose vertices correspond to the columns of $U$) contains the (lower dimensional) set of points  $S$ (corresponding to the columns of $M$), and (2) $S$ is contained in $P_m$ (which corresponds to the set of stochastic vectors in the column space of $M$). 
Therefore, the intersection between $T$ and $P_m$ must also contain $S$, i.e., the intersection $T \cap P_m$ defines a polytope which  (1) is contained in the column space of $M$, and (2) contains $S$.  Hence its vertices provide a feasible solution to the RNR problem, and an upper bound for the restricted nonnegative rank can then be computed. 

In other words, any nonnegative factorization $(U,V)$ of a nonnegative matrix $M$ can be used to construct a feasible solution to the restricted nonnegative rank problem. One has simply to compute the intersection of the polytope generated by the columns of $U$ with the column space of $M$ (which can obviously increase the number of vertices). 

\begin{theorem} \label{bornernr}  Using notations of Section~\ref{geoNN},  we have 
\begin{equation} \label{lbnn}
\rank_+^*(M) \leq \#\vertices(T \cap P_m). 
\end{equation}
\end{theorem}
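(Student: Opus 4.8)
The plan is to make rigorous the geometric argument sketched in the paragraph preceding the statement. Recall the setup of Section~\ref{geoNN}: starting from an arbitrary nonnegative factorization $(U,V)$ of $M$ (which we may take column stochastic), we factored $U = AB$ with $A$ full rank, obtained the outer polytope $P_u \subseteq \mathbb{R}^{r_u-1}$, the set of points $S = \conv(B_m)$ living inside the lower-dimensional polytope $P_m \subseteq P_u$, and the polytope $T = \conv(B_u)$ with exactly $k$ vertices (the columns of $U$), satisfying the nesting $S \subseteq T \subseteq P_u$ and $S \subseteq P_m$. The target inequality is $\rank_+^*(M) \leq \#\vertices(T \cap P_m)$.

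First I would observe that $T' := T \cap P_m$ is a bounded polyhedron, hence a polytope, and that $S \subseteq T'$ since $S \subseteq T$ and $S \subseteq P_m$ both hold. Next, $T' \subseteq P_m$ by construction, so $T'$ is a polytope nested between $S$ and $P_m$. The key point is that $P_m$ is (affinely) exactly the polytope $P$ that the reduction in Theorem~\ref{equiv} associates to the RNR instance for $M$: it is the set of stochastic vectors in $\col(M)$, expressed through the injective affine map $f$, and $S$ is the image of the columns of $M$. Thus a polytope $T'$ with $S \subseteq T' \subseteq P_m$ and $t$ vertices is, via the equivalence of Theorem~\ref{equiv} (more precisely via its proof, reading the NPP$\to$RNR direction), exactly a feasible solution to the RNR problem for $M$ with $t$ columns: list the vertices of $T'$ as columns $u_1,\dots,u_t$ (these lie in $\col(M) \cap \mathbb{R}^m_+$, stochastic), form $U' = [u_1\cdots u_t]$, and since each column of $M$ lies in $\conv(u_1,\dots,u_t)$ there is a column-stochastic $V' \geq 0$ with $M = U'V'$; moreover $\col(U') = \col(M)$ because $T'$ is full-dimensional inside the affine hull of $\col(M)$ (it contains $S = \conv(B_m)$, which affinely spans that space since $\rank(B') = r$). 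Therefore $\rank_+^*(M) \leq t = \#\vertices(T \cap P_m)$.

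The main obstacle — really the only nontrivial point — is justifying that $T \cap P_m$ is full-dimensional within $\col(M)$'s affine slice, i.e. that the extracted $U'$ genuinely has $\rank(U') = r$ rather than something smaller. This is where the hypothesis that $S$ is not contained in any hyperplane (equivalently $\rank(B') = r$, carried over from the standing assumptions on $M$ having rank $r$ with no degenerate column structure) is essential: since $S \subseteq T \cap P_m$ and $\conv(S)$ is $(r-1)$-dimensional, the vertices of $T \cap P_m$ affinely span the same $(r-1)$-dimensional affine subspace, so $U'$ has rank $r$. I would state this as a one-line consequence of $\conv(S) \subseteq T \cap P_m \subseteq P_m \subseteq \mathrm{aff}(\col(M))$ together with $\dim \conv(S) = r-1$.

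In writing it up I would keep it short: (i) $T\cap P_m$ is a polytope containing $S$ and contained in $P_m$; (ii) its vertices, pulled back through $f$, yield $U' \in \mathbb{R}^{m\times t}_+$ with $\col(U') = \col(M)$; (iii) convex-combination coefficients of the columns of $M$ w.r.t.\ these vertices assemble into $V' \in \mathbb{R}^{t\times n}_+$ with $M = U'V'$; (iv) hence $(U',V')$ is RNR-feasible and $\rank_+^*(M) \leq t$. All four steps are immediate given Theorem~\ref{equiv} and the constructions of Section~\ref{geoNN}, so no lengthy computation is needed.
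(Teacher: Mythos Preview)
Your proposal is correct and follows essentially the same approach as the paper: both arguments take the vertices of $T\cap P_m$, push them forward through the injective affine map $f_u$ to obtain a nonnegative $U^*$ with $\col(U^*)=\col(M)$, obtain the column-stochastic $V^*$ from the convex-combination coefficients, and verify $\rank(U^*)=r$. The only cosmetic differences are that the paper works explicitly with $f_u$ rather than invoking Theorem~\ref{equiv}, and checks $\rank(U^*)=r$ via the sandwich $r=\rank(M)\le\rank(U^*)\le r$ (the upper bound coming from $X\subseteq P_m$ being $(r-1)$-dimensional) rather than via full-dimensionality of $S$; also note that ``pulled back through $f$'' should read ``mapped through $f_u$'', since the vertices live in $\mathbb{R}^{r_u-1}$ and are sent to $\mathbb{R}^m$.
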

\begin{proof} 
Let $x_1, x_2, \dots, x_v$ be the $v$ vertices of $T \cap P_m$ and note $X = [x_1 \, x_2 \, \dots x_v]$ which has rank at most $r$ (since it is contained in the $(r-1)$-dimensional polyhedron $P_m$). By construction, 
\begin{equation} \label{Bmtp}
B_m(:,j) \; \in \; T \cap P_m = \conv(X) \quad 1 \leq j \leq n. \nonumber
\end{equation} 
Therefore, there must exist a matrix $V^* \in \mathbb{R}^{v \times n}$ column stochastic such that
\[
B_m = X V^*,
\]
implying that
\[
M = f_u(B_m) 
= f_u(X V^*) 
= f_u(X) V^* = U^*V^*,
\]
where $U^* = f_u(X) \in \mathbb{R}^{m \times v}$ is nonnegative since $x_i \in P_m \subset P_u \, \forall i$, and $U^*$ has rank $r$ since $M = U^*V^*$ implies that its rank is at least $r$ and $U^* = f_u(X)$ that it is at most $r$.   
The pair $(U^*,V^*)$ is then a feasible solution of the corresponding RNR problem for $M$ and therefore  $\rank_+^*(M) \leq v = \#\vertices(T \cap P_m)$.  
\end{proof}

%
%
%



\subsection{Lower Bound for the Nonnegative Rank based on the Restricted Nonnegative Rank}

We can now obtain a lower bound for the nonnegative rank based on the restricted nonnegative rank. Indeed, if we consider an upper bound on the quantity $\#\vertices(T \cap P_m)$ that increases with the nonnegative rank (i.e., the number of vertices of $T$), we can reinterpret Theorem~\ref{bornernr} as providing a lower bound on the nonnegative rank. For that purpose, define the quantity $\faces(n,d,k)$ to be the maximal number of $k$-faces of a polytope with $n$ vertices in dimension $d$.
\begin{theorem} \label{Thbound1}
The restricted nonnegative rank of a nonnegative matrix $M$ with $r = \rank(M)$ and $r_+ = \rank_+(M)$ can be bounded above by 
\begin{equation}
\rank_+^*(M)  \leq  \max_{r \leq r_u \leq r_+} \faces(r_+,r_u-1,r_u-r).  \label{bound1}
\end{equation}
 
\end{theorem}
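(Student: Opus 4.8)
The plan is to combine Theorem~\ref{bornernr} with a counting argument about the vertices of the intersection polytope $T \cap P_m$. Using the notations of Section~\ref{geoNN}, fix a rank-$r_+$ nonnegative factorization $(U,V)$ of $M$ realizing $\rank_+(M) = r_+$, and let $r_u = \rank(U)$, so that $r \le r_u \le r_+$. Recall that $T = \conv(B_u)$ is an $(r_u-1)$-dimensional polytope with at most $r_+$ vertices sitting inside $P_u \subseteq \mathbb{R}^{r_u-1}$, and that $P_m$ is the intersection of $P_u$ with an affine subspace of dimension $r-1$ (the image, under the coordinates, of $\col(M)$). By Theorem~\ref{bornernr}, $\rank_+^*(M) \le \#\vertices(T \cap P_m)$, so it suffices to bound $\#\vertices(T \cap P_m)$ above by $\faces(r_+, r_u-1, r_u-r)$ and then take the maximum over the admissible range $r \le r_u \le r_+$.

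The key geometric step is the following observation about slicing a polytope by an affine subspace. Let $T$ be a polytope of dimension $r_u - 1$ with at most $r_+$ vertices, and let $L$ be an affine subspace of dimension $r-1$ in general position (which we may assume, after an arbitrarily small perturbation that can only increase the vertex count of the slice, or by a limiting argument). Then each vertex of $T \cap L$ lies in the relative interior of a unique face $F$ of $T$ with $\dim F = (r_u-1) - (r-1) + 0 = r_u - r$; more precisely, a generic affine subspace of codimension $c := r_u - r$ in $\mathbb{R}^{r_u - 1}$ meets exactly the faces of $T$ of dimension $\ge c$, and its intersection with a $c$-dimensional face is (generically) a single point, while lower-dimensional faces are missed. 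Hence $\#\vertices(T \cap L)$ is at most the number of $(r_u-r)$-faces of $T$. Since $T$ has at most $r_+$ vertices and dimension $r_u - 1$, this number is at most $\faces(r_+, r_u - 1, r_u - r)$ by definition of the quantity $\faces$. Applying this with $L = P_m$'s affine hull (noting $T \cap P_m = T \cap \operatorname{aff}(P_m)$ because $\operatorname{aff}(P_m) \subseteq P_u$ and $T \subseteq P_u$) yields $\rank_+^*(M) \le \faces(r_+, r_u - 1, r_u - r)$, and maximizing over $r_u$ gives~\eqref{bound1}.

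The main obstacle is making the ``general position'' reduction rigorous while still controlling vertex counts: the affine subspace $\operatorname{aff}(P_m)$ is a specific subspace (not generic), so a priori the slice $T \cap P_m$ could have vertices that are not transverse intersections — for instance a whole face of $T$ could lie in $\operatorname{aff}(P_m)$, or a vertex of $T$ could lie on $\operatorname{aff}(P_m)$. The standard fix is a perturbation/limiting argument: perturb $\operatorname{aff}(P_m)$ slightly to a parallel subspace $L_\epsilon$ in general position with respect to the face lattice of $T$; the number of vertices of $T \cap L_\epsilon$ is then bounded by the number of $(r_u-r)$-faces of $T$, and one checks that $\#\vertices(T \cap P_m) \le \liminf_{\epsilon \to 0} \#\vertices(T \cap L_\epsilon)$ (slicing vertices can only merge or disappear in the limit, not appear). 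Alternatively, one argues directly that every vertex of $T \cap \operatorname{aff}(P_m)$ lies on some face of $T$ of dimension at most $r_u - r$ (it lies in the relative interior of a face of $T$ of dimension $\ge r_u - r$ cannot be forced, but one can still injectively assign each such vertex to a distinct face of dimension $\le r_u-r$ by a local-cone argument), which suffices. Once this counting lemma is in place, the rest is a direct application of Theorem~\ref{bornernr} and the definition of $\faces$.
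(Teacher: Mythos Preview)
Your approach is essentially the same as the paper's: apply Theorem~\ref{bornernr}, replace $P_m$ by the affine subspace $Q_m = \operatorname{aff}(P_m)$ (the paper defines $Q_m = \{x \mid f_u(x)\in\col(M)\}$, which coincides with $\operatorname{aff}(P_m)$), and bound $\#\vertices(T\cap Q_m)$ by the number of $(r_u-r)$-faces of $T$. The paper is actually terser than you are on the face-counting step and does not spell out any perturbation argument, so your extra care there is not needed to match the paper, though it does no harm.

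One small slip: your justification ``$T\cap P_m = T\cap\operatorname{aff}(P_m)$ because $\operatorname{aff}(P_m)\subseteq P_u$'' is wrong, since $P_u$ is bounded and an affine hull is not. The correct reason (exactly what the paper writes) is that $P_m = P_u \cap Q_m$ and $T\subseteq P_u$, hence $T\cap P_m = T\cap P_u\cap Q_m = T\cap Q_m$. With that fix, your argument matches the paper's.
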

\begin{proof}
Let $(U,V)$ be a rank-$r_+$ nonnegative factorization of $M$ with $\rank(U) = r_u$. 
Using notations of Section~\ref{geoNN} and the result of Theorem~\ref{bornernr}, $\rank_+^*(M)$ is bounded above by the the number of vertices of $T \cap P_m$.  Defining $Q_m =  \{ x \in \mathbb{R}^{r_u-1} \,|\, f_u(x) \in \col(M)\}$, we have $P_m =  Q_m \cap P_u$ and since $T \subset P_u$,
\[
P_m \cap T = Q_m \cap P_u \cap T = Q_m \cap T.
\]
Since $Q_m$ is $(r-1)$-dimensional, the number of vertices of $T \cap Q_m$ is bounded above by the number of $(r_u-r)$-faces of $T$ (in a $(r_u-1)$-dimensional space, $(r_u-r)$-faces are   defined by $r-1$ equalities), we then have
\[
\rank_+^*(M) \leq \#\vertices(T \cap P_m) = \#\vertices(T \cap Q_m) \leq \faces(r_+,r_u-1,r_u-r).
\] 
Notice that for $r_u = r$, $\faces(r_+,r-1,0) = r_+$ which gives $r_+ = \rank_+^*(M)$ as expected. 
Finally, taking the maximum over all possible values of $r \leq r_u \leq r_+$ gives the above bound \eqref{bound1}. 
\end{proof}
We introduce for easier reference a function $\phi$ corresponding to the upper bound in Theorem~\ref{Thbound1}, i.e.,
\[
\phi(r,r_+) =  \max_{r \leq r_u \leq r_+} \faces(r_+,r_u-1,r_u-r).
\]
Clearly, when $r$ is fixed, $\phi$ is an increasing function of its second argument $r_+$, since $\faces(n,d,k)$ increases with $n$. Therefore inequality $\rank_+^*(M) \leq \phi(r,r_+)$ from Theorem~\ref{Thbound1} implicitly provides a lower bound on the nonnegative rank $r_+$ that depends on both rank $r$ and restricted nonnegative rank $\rank_+^*(M)$.

Explicit values for function $\phi$ can be computed using a tight bound for $\faces(n,d,k)$ attained by cyclic polytopes \cite[p.257, Corollary 8.28]{Z95}
\[
\faces(n,d,k-1) = \sum_{i = 0}^{\frac{d}{2}}{}^* \bigg(  
\binom{d-i}{k-i} + \binom{i}{k-d+i} \bigg) \binom{n-d-1+i}{i},  
\]
where $\sum{}^*$ denotes a sum where only half of the last term is taken for $i = \frac{d}{2}$ if $d$ is even, and the whole last term is taken for $i = \lfloor \frac{d}{2} \rfloor = \frac{d-1}{2}$ if $d$ is odd. 
Alternatively, simpler versions of the bound can be worked out in the following way:
\begin{theorem}\label{allbounds}The upper bound $\phi(r,r_+)$ on the restricted nonnegative rank of a nonnegative matrix $M$ with $r = \rank(M)$ and $r_+ = \rank_+(M)$ satisfies 
\begin{eqnarray*}
\phi(r,r_+) &=& \max_{r \leq r_u \leq r_+} \faces(r_+,r_u-1,r_u-r) \\ 
&\le& \max_{r \leq r_u \leq r_+} \binom{r_+}{r_u-r+1} \le \binom{r_+}{\lfloor r_+/2 \rfloor} \le  2^{r_+} \sqrt{\frac{2}{\pi r_+}}\le 2^{r_+} \;.
\end{eqnarray*} 
\end{theorem}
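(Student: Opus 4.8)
The plan is to establish the chain of four inequalities in Theorem~\ref{allbounds} one link at a time, moving from the combinatorially sharp (but complicated) cyclic polytope bound toward progressively cruder but cleaner estimates.

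First I would bound $\faces(r_+,r_u-1,r_u-r)$ by $\binom{r_+}{r_u-r+1}$. The quantity $\faces(n,d,k)$ counts $k$-faces of a polytope with $n$ vertices; every $k$-face is the convex hull of at least $k+1$ of the $n$ vertices, and distinct faces correspond to distinct vertex subsets, so $\faces(n,d,k) \le \binom{n}{k+1}$ (in fact one can be more careful, but this crude count suffices). Substituting $n = r_+$, $k = r_u - r$ gives $\faces(r_+,r_u-1,r_u-r) \le \binom{r_+}{r_u-r+1}$, and taking the maximum over $r \le r_u \le r_+$ preserves the inequality. This is the step I expect to require the most care, since one must make sure the face-counting argument is valid at the boundary values $r_u = r$ (where it reads $\binom{r_+}{1} = r_+$, matching the remark in the proof of Theorem~\ref{Thbound1}) and $r_u = r_+$.

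Next, for the second inequality $\max_{r \le r_u \le r_+} \binom{r_+}{r_u-r+1} \le \binom{r_+}{\lfloor r_+/2 \rfloor}$: as $r_u$ ranges over $r,\dots,r_+$, the lower index $r_u - r + 1$ ranges over $1,\dots,r_+-r+1 \le r_+$, so all the binomial coefficients appearing are of the form $\binom{r_+}{j}$ with $0 \le j \le r_+$, and the central binomial coefficient $\binom{r_+}{\lfloor r_+/2\rfloor}$ is the largest among these by unimodality of a row of Pascal's triangle. The third inequality $\binom{r_+}{\lfloor r_+/2 \rfloor} \le 2^{r_+}\sqrt{2/(\pi r_+)}$ is the standard central-binomial estimate, which follows from Stirling's formula (or Wallis's product); and the final inequality $2^{r_+}\sqrt{2/(\pi r_+)} \le 2^{r_+}$ holds because $\sqrt{2/(\pi r_+)} \le 1$ for $r_+ \ge 1$ (indeed for $r_+ \ge 1$ we have $\pi r_+ \ge 2$). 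Chaining these four bounds together yields the displayed inequality, completing the proof.
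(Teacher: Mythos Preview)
Your proposal is correct and follows essentially the same route as the paper's proof: the paper justifies the first inequality by the observation that any set of $k{+}1$ vertices determines at most one $k$-face (equivalent to your injection argument), invokes maximality of the central binomial coefficient for the second, cites the standard Stirling/Wallis bound for the third, and dismisses the fourth as trivial. Your phrasing ``distinct faces correspond to distinct vertex subsets'' is slightly loose---what is really needed is that each $k$-face contains $k{+}1$ affinely independent vertices which then uniquely determine it as the smallest face containing them---but the paper's own one-line justification is no more detailed.
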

\begin{proof}
The first inequality follows from the fact that $\faces(n,d,k-1) \leq \binom{n}{k}$, since any set of $k$ distinct vertices defines at most one $k-1$-face. The second follows from the maximality of central binomial coefficients. The third is a standard upper bound on central binomial coefficients, and the fourth is an even cruder upper bound.\end{proof}
We will see in Section~4 that some of these weaker bounds correspond to existing results from the literature.

When matrix $M$ is symmetric, the bound can be slightly strengthened, leading to a different function $\phi'$:
\begin{corollary} \label{corolrp}
Given a symmetric matrix $M$ with $r_+ = \rank_+(M)$, $r = \rank(M)$ and $r_+ \geq r+1$, we have 
\[ 
\rank_+^*(M) \leq \max_{r \leq r_u \leq r_+-1} \faces(r_+,r_u-1,r_u-r) = \phi'(r,r_+)  \leq \phi(r,r_+). 
\]
\end{corollary}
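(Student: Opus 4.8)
The plan is to mimic the proof of Theorem~\ref{Thbound1} but exploit the extra information that symmetry gives us via Lemma~\ref{rMrU}. Let $(U,V)$ be a rank-$r_+$ nonnegative factorization of $M$ with $r_u = \rank(U)$. The key observation is that when $M$ is symmetric and $r_+ \geq r+1$, Lemma~\ref{rMrU} tells us that either $\rank_+^*(M) = r_+$ (in which case the claimed bound holds trivially, since $\faces(r_+,r-1,0) = r_+$ is already one of the terms in the maximum defining $\phi'$), or else $r < \rank(U) < r_+$, i.e.\ $r_u \leq r_+ - 1$. So in the nontrivial case the value $r_u = r_+$ is excluded, and the maximum in Theorem~\ref{Thbound1} can be taken only over $r \leq r_u \leq r_+ - 1$.

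Concretely, I would argue as follows. First dispatch the case $\rank_+^*(M) = r_+$: here $\rank_+^*(M) = r_+ = \faces(r_+,r-1,0) \leq \max_{r \leq r_u \leq r_+-1}\faces(r_+,r_u-1,r_u-r) = \phi'(r,r_+)$ since $r_u = r$ is an admissible index in the restricted maximum (using $r_+ \geq r+1$ so that the index set is nonempty). Otherwise $\rank_+^*(M) > r_+$, and Lemma~\ref{rMrU} applied to the symmetric matrix $M$ yields $r < \rank(U) < r_+$ for any rank-$r_+$ factorization; in particular $r_u = \rank(U) \leq r_+ - 1$. Then the same geometric reasoning as in the proof of Theorem~\ref{bornernr} and Theorem~\ref{Thbound1} gives $\rank_+^*(M) \leq \#\vertices(T \cap P_m) \leq \faces(r_+, r_u-1, r_u-r)$, and since $r \leq r_u \leq r_+-1$ this last quantity is at most $\max_{r \leq r_u \leq r_+-1}\faces(r_+,r_u-1,r_u-r) = \phi'(r,r_+)$. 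Finally, $\phi'(r,r_+) \leq \phi(r,r_+)$ is immediate because $\phi'$ is a maximum over a subset of the index range defining $\phi$.

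The only subtlety worth spelling out is that the statement restricts to $r_+ \geq r+1$, which guarantees that the index range $r \leq r_u \leq r_+-1$ is nonempty (it contains at least $r_u = r$), so that $\phi'(r,r_+)$ is well defined; I would note this explicitly. No serious obstacle is expected here — the corollary is essentially the symmetric refinement of Theorem~\ref{Thbound1} obtained by feeding in the stronger conclusion of Lemma~\ref{rMrU}, and the main work has already been done in proving those two results.
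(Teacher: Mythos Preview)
Your proposal is correct and follows essentially the same approach as the paper. The paper's own proof is extremely terse (two sentences), simply noting that by Lemma~\ref{rMrU} the case $r_u = r_+$ forces $\rank_+^*(M) = r_+$ in the symmetric setting, so one may restrict the range in Theorem~\ref{Thbound1} to $r \leq r_u \leq r_+-1$; your write-up spells out the same case split (namely $\rank_+^*(M) = r_+$ versus $\rank_+^*(M) > r_+$) in more detail, including the observation that $\faces(r_+,r-1,0)=r_+$ handles the first case.
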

\begin{proof}
We have seen in Lemma~\ref{rMrU} that for symmetric matrices $r_u = r_+$ implies  $\rank_+^*(M) = r_+$. Therefore, in case $r_+ \geq r+1$, one can strengthen the result of Theorem~\ref{Thbound1} and only consider the range $r \leq r_u \leq r_+-1$. 
\end{proof}

\subsubsection{Improvements in the rank-three case}

It is possible to improve the above bound by finding better upper bounds for $\#\vertices(T \cap P_m)$ in Equation~\eqref{lbnn}. For example, since two-dimensional polytopes (i.e., polygons) have the same number of vertices (0-faces) and edges (1-faces), we have for $\rank(M) = 3$ that
\[
\#\vertices(T \cap P_m) \quad = \quad \#\text{edges}(T \cap P_m).
\] 
Using the same argument as in Theorem~\ref{Thbound1}, the number of edges of $T \cap P_m$ is bounded above by the number of $(r_u-r+1)$-faces of $T$ (defined by $r-2$ equalities) 
leading to 
\begin{corollary} \label{cor3}
The restricted nonnegative rank of a rank-three nonnegative matrix $M$ with $r_+ = \rank_+(M)$ can be bounded above with
\begin{equation} \label{boundr3}
\rank_+^*(M)  \leq   \max_{3 \leq r_u \leq r_+} \; \min_{i=0,1} \; \faces(r_+,r_u-1,r_u-3+i) \le \phi(3,r_+).
\end{equation}
\end{corollary}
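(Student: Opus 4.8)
The plan is to specialize the proof of Theorem~\ref{Thbound1} to the case $r = \rank(M) = 3$, using the extra fact that a two-dimensional polytope has as many edges as vertices. Fix a rank-$r_+$ nonnegative factorization $(U,V)$ of $M$ with $r_u = \rank(U)$, and adopt the notation of Section~\ref{geoNN}: the polytope $T = \conv(B_u)$ has (at most) $r_+$ vertices, lives in $\mathbb{R}^{r_u-1}$, and is nested between $\conv(S) = \conv(B_m)$ and $P_u$. As in the proof of Theorem~\ref{Thbound1}, introduce the affine subspace $Q_m = \{x \in \mathbb{R}^{r_u-1} \;|\; f_u(x) \in \col(M)\}$, which has dimension $r-1 = 2$ (equivalently codimension $r_u - 3$) because $f_u$ is an affine injection and the stochastic vectors in $\col(M)$ form a $2$-dimensional affine set, and recall the identity $P_m \cap T = Q_m \cap T$. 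Theorem~\ref{bornernr} then gives $\rank_+^*(M) \leq \#\vertices(T \cap P_m) = \#\vertices(T \cap Q_m)$, so it suffices to bound $\#\vertices(T \cap Q_m)$.

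First I would note that $T \cap Q_m$ is genuinely two-dimensional, since it contains $\conv(S)$ which is full-dimensional inside the plane $Q_m$ by the hypothesis on the NPP instance; hence $T \cap Q_m$ is a polygon and $\#\vertices(T \cap Q_m) = \#\text{edges}(T \cap Q_m)$. Then I would carry out the two face counts. For the vertices, exactly as in Theorem~\ref{Thbound1}: each vertex of $T \cap Q_m$ is isolated in the slice of a face of $T$ it lies on, so a dimension count forces that face to have dimension at most $(r_u-1) - \text{codim}(Q_m) = r_u - 3$, and distinct vertices are charged to distinct faces; therefore $\#\vertices(T \cap Q_m) \leq \faces(r_+, r_u-1, r_u-3)$. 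For the edges, the same reasoning applied to a one-dimensional face of the slice shows each edge of $T \cap Q_m$ lies on a distinct face of $T$ of dimension $r_u-2$ (a facet, cut out by $r-2 = 1$ equality), giving $\#\text{edges}(T \cap Q_m) \leq \faces(r_+, r_u-1, r_u-2)$. Combining these with the polygon identity yields $\rank_+^*(M) \leq \min_{i=0,1}\faces(r_+, r_u-1, r_u-3+i)$; since $3 = r = \rank(M) \leq \rank(U) = r_u \leq r_+$, taking the maximum over $r_u$ produces~\eqref{boundr3}. The trailing inequality $\leq \phi(3,r_+)$ is immediate since $\phi(3,r_+) = \max_{3\leq r_u\leq r_+}\faces(r_+,r_u-1,r_u-3)$ and $\min_{i=0,1}(\cdot)$ is dominated by the $i=0$ term.

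I expect the main obstacle to be, just as in the proof of Theorem~\ref{Thbound1}, making the dimension-counting arguments fully rigorous in degenerate position: one must verify that if the plane $Q_m$ contains a whole face of $T$ or passes through several vertices, then each vertex (resp.\ edge) of $T \cap Q_m$ can still be injectively assigned to a face of $T$ of dimension at most $r_u - 3$ (resp.\ at most $r_u - 2$), so that such special positions only lower the counts. Everything else — the reduction to a polygon slice, the polygon identity $\#\vertices = \#\text{edges}$, and the range of $r_u$ — is routine given the machinery of Sections~\ref{equivRNR} and~\ref{geoNN} and the preceding theorems.
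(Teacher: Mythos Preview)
Your proposal is correct and follows essentially the same approach as the paper: the paper's argument is precisely to combine the polygon identity $\#\vertices(T\cap P_m)=\#\text{edges}(T\cap P_m)$ in the rank-three case with the dimension-counting of Theorem~\ref{Thbound1} applied once to vertices ($i=0$) and once to edges ($i=1$), then take the minimum and maximize over $r_u$. Your write-up is simply a more detailed version of the paper's sketch, and your caution about degenerate positions is the same issue already implicit in Theorem~\ref{Thbound1}.
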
 
The minimum taken between 0 and 1 simply accounts for the two possible cases, i.e., the bound based on $\#\vertices(T \cap P_m)$  with $i=0$ as in Theorem~\ref{Thbound1}, or based on  $\#\text{edges}(T \cap P_m)$ with $i=1$. A similar bound holds in the symmetric case.

\section{Applications} \label{illus}

So far, we have not provided explicit lower bounds for the nonnegative rank. As we have seen, inequalities \eqref{bound1} and \eqref{boundr3} can be interpreted as implicit lower bounds on the nonnegative rank $r_+$, but have the drawback of depending on the restricted nonnegative rank, which cannot be computed efficiently unless the rank of the matrix is smaller than 3 (Theorems~\ref{exactNMFcompl} and \ref{complRNR}). 

Nevertheless, we provide in this Section explicit lower bounds for the nonnegative rank of slack matrices (Section~\ref{SM}) and linear Euclidean distance matrices (Section~\ref{euclid}), cf.\@ introduction of Section~\ref{NR}. These bounds are derived by showing that the restricted nonnegative rank of such matrices is maximum, i.e., it is equal to the number of columns of these matrices (cf.\@ Lemma~\ref{weirdo}).

\subsection{Slack Matrices} \label{SM}


Let start with a simple observation: it is easy to construct a $m \times n$ matrix of rank $r < \min(m,n)$ with maximum restricted nonnegative rank $n$:
\begin{enumerate} 
\item Take any $(r-1)$-dimensional polytope $P$ with $n$ vertices.
\item Construct a NPP instance with $S = \vertices(P)$.
\item Compute the corresponding matrix $M$ in the equivalent RNR instance.
\end{enumerate} 
Clearly, the unique solution for NPP is $T = P = \conv(S)$ and therefore the matrix $M$ in the corresponding RNR instance must satisfy:  $\rank_+^*(M) = \#\vertices(T) = n$; see  Example~\ref{ex1} for an illustration with the three-dimensional cube. 

\begin{remark}
The matrices constructed as described above also satisfy 
\[
\rank(M) < \rank_+(M).
\] 
Otherwise $rank_+^*(M) = \rank_+(M) = \rank(M) < \min(m,n)$ which is a contradiction. This is interesting because it is nontrivial to construct matrices with $\rank(M) < \rank_+(M)$ \cite{CL10}. In fact, it is easy to check that generating randomly two nonnegative matrices $U$ and $V$ of dimensions $m \times r$ and $r \times n$ respectively, and constructing $M = UV$ will generate a matrix $M$ of rank $r$ will probability one. 
\end{remark}

In the context of compact formulations (cf.\@ Section~\ref{intro}), 
the aim is to express a polytope $Q$ with fewer constraints by using some additional variables, i.e., find a \emph{lifting} of polynomial size. 
A possible way to do that is to compute a nonnegative factorization of the slack matrix $S_M$ of $Q$ \cite{Y91} (see Equation~\eqref{SMdef}).  
The next theorem states that the restricted nonnegative rank of any slack matrix $S_M \in \mathbb{R}^{f \times v}_+$ is maximum ($f$ is the number of facets of $Q$, $v$ its number of vertices),  i.e., $\rank_+^*(S_M) = v$. This is directly related to the above observation: the slack matrix of a polytope $Q$ corresponds to a NPP instance where $Q$ is the outer polytope 
and its vertices are the points defining the inner polytope. Notice that the restricted nonnegative rank used as an upper bound for the nonnegative rank is useless in this case. 




%

\begin{theorem} \label{thSM}
Let $Q = \{ x \in \mathbb{R}^{q} \, | \, Fx \geq h, Ex = g \}$ be a $p$-dimensional polytope with $v$ vertices, $v > 1$, and let $S_M(Q)$ be its slack matrix, then $\rank_+^*(S_M(Q)) = v$. 
\end{theorem}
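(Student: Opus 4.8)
The plan is to show two inequalities: $\rank_+^*(S_M(Q)) \leq v$ and $\rank_+^*(S_M(Q)) \geq v$. The upper bound is immediate from Lemma~\ref{weirdo} (or directly from the factorization $S_M = S_M I$), so the real content is the lower bound, and for that I would invoke the equivalence between RNR and NPP established in Theorem~\ref{equiv}.

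First I would recall the observation already flagged in the remark following Theorem~\ref{equiv}: the slack matrix $S_M(Q)$ is, up to the standard normalization and removal of zero rows/columns used in the proof of Theorem~\ref{equiv}, exactly the matrix arising from the NPP instance in which the outer polytope is $Q$ itself (described by its facet inequalities $Fx \geq h$) and the set $S$ of inner points is the set of $v$ vertices of $Q$. One small technical point to check here is that the normalization and zero-row/zero-column deletion steps from the proof of Theorem~\ref{equiv} do not change the restricted nonnegative rank (this is asserted there, citing \cite{CL08}) and do not alter the fact that $\col(S_M)$ has dimension $r = p+1$ where $p = \dim Q$ — indeed the slack matrix of a $p$-dimensional polytope with a vertex description has rank $p+1$, matching the dimension $r-1 = p$ of the NPP outer polytope. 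I would state this as the reduction: $\rank_+^*(S_M(Q))$ equals the minimum number $k$ of points of $Q$ whose convex hull $T$ satisfies $\vertices(Q) = S \subseteq T \subseteq Q$.

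The crux is then the purely geometric fact that this minimum $k$ is exactly $v$. For the lower bound, suppose $T = \conv(\{x_1,\dots,x_k\})$ with each $x_i \in Q$ and $\vertices(Q) \subseteq T$. Since $T \subseteq Q$ and $T$ contains every vertex of $Q$, we get $Q = \conv(\vertices(Q)) \subseteq \conv(T) = T \subseteq Q$, hence $T = Q$. But then each vertex $v_j$ of $Q$ is an extreme point of $T$, and every extreme point of $\conv(\{x_1,\dots,x_k\})$ must be one of the $x_i$; so each of the $v$ vertices of $Q$ appears among the $x_i$, forcing $k \geq v$. Combined with the trivial feasible solution $T = Q = \conv(\vertices(Q))$ giving $k \leq v$, we conclude the NPP optimum is $v$, and therefore $\rank_+^*(S_M(Q)) = v$.

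I do not expect a serious obstacle here — the geometric argument is elementary (extreme points of a finitely generated polytope lie among the generators). The only place requiring a little care is making the reduction in Theorem~\ref{equiv} line up cleanly with the slack matrix: one must confirm that when $S$ is taken to be the vertex set of the NPP outer polytope $P$, the matrix $M$ produced by the reduction is indeed (a normalized, compressed form of) the slack matrix $S_M(Q)$ with $Q = P$, and that $v > 1$ together with $Q$ being a genuine polytope guarantees $\conv(S)$ is full-dimensional in the affine hull of $Q$ so that the NPP instance is well-posed. Once that bookkeeping is in place, the theorem follows.
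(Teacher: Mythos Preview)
Your proposal is correct and follows essentially the same route as the paper: reduce to the NPP instance whose outer polytope is $Q$ and whose inner set $S$ is the vertex set of $Q$, then observe that any feasible $T$ with $S\subseteq T\subseteq Q$ must equal $Q$ and hence have at least $v$ generators. The one place the paper invests more effort is precisely the ``bookkeeping'' you flag: because the NPP formulation in Theorem~\ref{equiv} requires a full-dimensional outer polytope with $(C\; d)$ of full rank, the paper explicitly builds an affine bijection $L$ from $Q$ onto a full-dimensional $P\subset\mathbb{R}^p$, verifies $S_M(P)=S_M(Q)$, and checks that $(C\; d)$ has rank $p+1$ (using $v>1$), so you should expect that step to carry most of the write-up rather than being a footnote.
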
 
\begin{proof}
In order to prove this result, we first construct a bijective transformation $L$ between $Q$ and a full-dimensional polytope $P \subset \mathbb{R}^p$. The vertices of $P$ can then be easily  constructed from the vertices of $Q$, which allows to show that $P$ and $Q$ share the same slack matrix. Finally, using the result of Theorem~\ref{equiv}, we show that the slack matrix of $P$ has maximum restricted nonnegative rank. \\

Since $Q$ is a $p$-dimensional polytope, there exists a polytope $P \subset \mathbb{R}^p$ and a bijective affine transformation 
\[
L \, : \, Q \rightarrow P : x \rightarrow 
L(x) = Ax + b 
\quad \text{ and } \quad
L^{-1} \, : \, P \rightarrow Q \, : \, y \rightarrow 
L^{-1}(y) = A^{\dag}y - A^{\dag}b, 
\]
such that $P = L(Q)$ and $Q = L^{-1}(P)$ (where $A \in \mathbb{R}^{p \times q}$ has full rank, $A^{\dag} \in \mathbb{R}^{q \times p}$ is its right inverse and $b \in \mathbb{R}^{p}$). \\
By construction, 
\begin{eqnarray*}
P & = & \{ y \in \mathbb{R}^{p} \, | \, y = L(x), x \in Q   \} 
    =   \{ y \in \mathbb{R}^{p} \, | \, L^{-1}(y) \in Q  \}, \\
 & = & \{ y \in \mathbb{R}^{p} \, | \, F L^{-1}(y) \geq h,  E L^{-1}(y) = g \}, \\
  & = & \{ y \in \mathbb{R}^{p} \, | \, F A^{\dag} y \geq h+FA^{\dag}b \},
\end{eqnarray*}
since the equalities $E L^{-1}(y) = g$ must be satisfied for all $y \in \mathbb{R}^{p}$ since $P$ is full-dimensional. 

Noting $C = F A^{\dag}$ and $d = h+FA^{\dag}b$, we have $P = \{ y \in \mathbb{R}^{q} \, | \, C y \geq d \}$. Finally, we observe that
\begin{enumerate}
\item Noting $v_i$'s the $v$ vertices of $Q$, we have that $L(v_i)$'s define the $v$ vertices of $P$. This can easily be checked since $L$ is bijective ($\forall y \in P, \exists! x \in Q \, \text{ s.t. } \,  y = L(x)$ and vice versa). 
\item $P$ can be taken as the outer polytope of a NPP instance, i.e., $P$ is bounded and $(C \; d)$ is full rank. $P$ is bounded since $Q$ is. $C$ is full rank because $P$ has at least one vertex ($v>1$). If $(C \; d)$ was not full rank, then $\exists z \in \mathbb{R}^p$ such that $d = Cz$, implying that $z \in P$.   
Since $P$ has at least two vertices ($v>1$), $\exists y \in P$ with $y \neq z$, and one can check that $y + \alpha {(y-z)} \in P$ $\forall \alpha \geq 0$. This is a contradiction because $P$ is bounded. 

\item The slack matrix of $P$ is equal to the slack matrix of $Q$: 
\begin{eqnarray*}
S_M(P) & = & CL(V)-[d \, \dots \, d] = F A^{\dag} L(V) - [h+FA^{\dag}b \, \dots \, h+FA^{\dag}b] \\
 			 & = & F (A^{\dag} L(V) - [A^{\dag}b \, \dots \, A^{\dag}b ]) - [h \, \dots \, h] \\
			 & = & F L^{-1}(L(V)) - [h \, \dots \, h] = F V - [h \, \dots \, h] \\ 
			 & = & S_M(Q),
\end{eqnarray*}
where $V = [ v_1 \, v_2 \dots v_v]$ is the matrix whose columns are the vertices of $Q$, and $L(V) = [ L(v_1) \, L(v_2) \dots L(v_v)]$ is the matrix whose columns are the vertices of $P$. 

\item The NPP instance with $P$ as the outer polytope and its $v$ vertices $L(v_i)$'s as the set of points $S$ defining the inner polytope has a unique and optimal solution $T = P = \conv(S)$ with $v$ vertices. The matrix $M$ in the  RNR instance corresponding to this NPP instance is given by the slack matrix $S_M(P)$ of $P$ implying that its restricted nonnegative rank is equal to $v$ (cf.\@ Theorem~\ref{equiv}). 
\end{enumerate}
We can conclude that $\rank_+^*(S_M(Q)) = v$.
\end{proof}

We can now derive a lower bound on the nonnegative rank of a slack matrix and on the size of an extended formulation, by combining Theorem~\ref{Thbound1} (cf.\@ Equation~\eqref{bound1}), Theorem~\ref{allbounds}, Theorem~\ref{thSM} and the result of Yannakakis \cite{Y91} (see also Section~\ref{intro}).  
\begin{corollary} \label{Goeimproved}
Let $P$ be a polytope with $v$ vertices and let $S_M \in \mathbb{R}^{f \times v}_+$ be its slack matrix of rank $r$ (i.e., $P$ has dimension $r-1$), then
\begin{equation} \label{rnnS} 
v  
\leq \phi(r,r_+) = \phi_r(r_+) 
\leq \max_{r \leq r_u \leq r_+} \binom{r_+}{r_u-r+1} 
\leq \binom{r_+}{\lfloor r_+/2 \rfloor} 
\leq   2^{r_+}, 
\end{equation}
where $r_+ = \rank_+(S_M)$. Therefore, the minimum size $s$ of any extended formulation of $P$ follows 
\[
s = \Theta(r_+ + n) \geq \Theta( \phi_r^{-1}(v) ) \ge \Theta(\log_2(v)), 
\]
where $\phi_r^{-1}(\cdot)$ is the inverse of the nondecreasing function $\phi_r(\cdot)=\phi(r,\cdot)$. 
\end{corollary}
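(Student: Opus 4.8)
The plan is to assemble Corollary~\ref{Goeimproved} directly from results already in hand, with essentially no new geometry. First I would invoke Theorem~\ref{thSM}, which says that the slack matrix $S_M$ of $P$ has maximal restricted nonnegative rank, namely $\rank_+^*(S_M) = v$. Then I would feed this into Theorem~\ref{Thbound1}, which bounds $\rank_+^*(S_M) \le \phi(r, r_+)$ where $r = \rank(S_M)$ and $r_+ = \rank_+(S_M)$; combining the two gives the key inequality $v \le \phi(r,r_+)$. The remaining inequalities in \eqref{rnnS} --- namely $\phi(r,r_+) \le \max_{r \le r_u \le r_+}\binom{r_+}{r_u-r+1} \le \binom{r_+}{\lfloor r_+/2\rfloor} \le 2^{r_+}$ --- are exactly the chain established in Theorem~\ref{allbounds}, so they can be quoted verbatim. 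The notational shorthand $\phi_r(r_+) := \phi(r,r_+)$ is just a renaming.

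Next I would turn the inequality $v \le \phi_r(r_+)$ into a lower bound on $r_+$. Since the paper has already observed (in the discussion following Theorem~\ref{Thbound1}) that for fixed $r$ the function $r_+ \mapsto \phi(r,r_+)$ is nondecreasing, it has a well-defined nondecreasing ``inverse'' $\phi_r^{-1}$ in the sense that $v \le \phi_r(r_+)$ forces $r_+ \ge \phi_r^{-1}(v)$; this is the only genuinely new line of argument, and it is a one-liner about monotone functions. Then I would apply Yannakakis's theorem (cited in Section~\ref{intro}), which states that the minimum size $s$ of an extended formulation of $P$ satisfies $s = \Theta(n + \rank_+(S_M)) = \Theta(r_+ + n)$, where $n = r - 1$ is the dimension of $P$. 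Chaining $s = \Theta(r_+ + n) \ge \Theta(r_+) \ge \Theta(\phi_r^{-1}(v))$ gives the first displayed lower bound.

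Finally, for the crude bound $s \ge \Theta(\log_2 v)$ I would use the last inequality of the chain, $\phi_r(r_+) \le 2^{r_+}$; inverting it gives $\phi_r^{-1}(v) \ge \log_2 v$, and hence $s \ge \Theta(\phi_r^{-1}(v)) \ge \Theta(\log_2 v)$. I would note in passing that this weakest form already recovers Goemans's bound \eqref{goem} in the special case where all rows and columns of the slack matrix have distinct sparsity patterns, but for slack matrices the sharper implicit bound $s \ge \Theta(\phi_r^{-1}(v))$ is available since Theorem~\ref{thSM} pins the restricted nonnegative rank exactly at $v$ rather than merely at the number of distinct sparsity patterns.

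I do not expect a serious obstacle here: every inequality is already proved, and the only subtlety is to phrase the passage from $v \le \phi_r(r_+)$ to $r_+ \ge \phi_r^{-1}(v)$ carefully, since $\phi_r$ is integer-valued and not strictly increasing, so $\phi_r^{-1}$ should be read as $\phi_r^{-1}(v) = \min\{\rho : \phi_r(\rho) \ge v\}$; with that convention the implication is immediate from monotonicity. The mild care needed in handling the $\Theta(\cdot)$ bookkeeping (absorbing the additive $n$ term, and the fact that $n = r-1$ may itself be comparable to or larger than $\phi_r^{-1}(v)$ in degenerate regimes) is routine and will be dispatched in a sentence.
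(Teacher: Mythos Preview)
Your proposal is correct and matches the paper's approach exactly: the paper does not even give a separate proof environment for this corollary, simply stating beforehand that it follows ``by combining Theorem~\ref{Thbound1} (cf.\@ Equation~\eqref{bound1}), Theorem~\ref{allbounds}, Theorem~\ref{thSM} and the result of Yannakakis.'' Your elaboration on the monotone inversion $\phi_r^{-1}(v) = \min\{\rho : \phi_r(\rho) \ge v\}$ and the $\Theta$-bookkeeping is useful detail that the paper leaves implicit.
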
 
The last bound $2^{r_+}$ from Equation~\eqref{rnnS} is the one of Goemans \cite[Theorem 1]{G09} (see introduction of Section~\ref{NR}), and therefore Corollary~\ref{Goeimproved} provides us with an improved lower bound, even though it is still in $\Omega(\log_2(v))$. It is actually  not possible to provide an unconditionally better bound (i.e., without making additional hypothesis on the polytope $P$): since Goemans showed that the size of any LP formulation of the permutahedron (with $v=n!$ vertices) must be in   $\Omega(n\log(n))$, this implies that the nonnegative rank of its slack matrix is in  $\Omega(n\log(n))$.




\subsection{Linear Euclidean Distance Matrices} \label{euclid}

Linear Euclidean distance matrices (linear EDM's) are defined by
\begin{equation} \label{LEM}
M(i,j) = (a_i-a_j)^2, \quad 1 \leq i, j \leq n, \text{ for some } a \in \mathbb{R}^n.
\end{equation} 
In this section we assume $a_i \neq a_j \; i \neq j$, so that these matrices have rank three. Linear EDM's were used in \cite{BL09} to show that the nonnegative rank of a matrix with fixed rank (rank $3$ in this case) can be made as large as desired (while increasing the size of the matrix), implying that an upper bound for the nonnegative rank of a matrix based only on the rank cannot exist. 

We refer the reader to \cite{KW10} and the references therein for detailed discussions about Euclidean distance matrices, and related applications.

\subsubsection{Restricted Nonnegative Rank of Linear Euclidean Distance Matrices}

We first show that the restricted nonnegative rank of linear EDM's is maximum, i.e., it is equal to their dimension $n$.

\begin{definition}
The columns of a matrix $M$ have disjoint sparsity patterns if and only if 
\[
s_i \; \nsubseteq \; s_j, \quad \forall i \neq j,
\] 
where $s_i = \{ k | M(k,i) = 0 \}$ is the sparsity pattern of the $i^{\text{th}}$ column of $M$. 
\end{definition}
\begin{theorem} \label{rnrLEM}
Let $M$ be a rank-three nonnegative square matrix of dimension $n$ whose columns have disjoint sparsity patterns, then 
\[
\rank_+^*(M) = n. 
\]
In particular, linear EDM's have this property.
\end{theorem}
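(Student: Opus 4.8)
The plan is to use the geometric interpretation of RNR (Theorem~\ref{equiv}): for a rank-three matrix $M$, computing $\rank_+^*(M)$ amounts to solving a two-dimensional nested polytopes problem, i.e., finding the minimum-vertex polygon $T$ with $S \subseteq T \subseteq P$, where $P$ is the outer polygon defined by $m=n$ inequalities and $S$ is a set of $n$ points affinely spanning $P$. We want to show this minimum equals $n$, and by the trivial feasible solutions $T=S$ and $T=P$ (which both have at most $n$ vertices; recall $P$ has at most $n$ facets hence at most $n$ vertices since we are in dimension two), it suffices to show $\rank_+^*(M)\ge n$. The disjoint sparsity pattern hypothesis is the key: column $j$ of $M$ has zero set $s_j$, and $M(k,j)=0$ means the point $s_j \in S$ lies on the $k$-th facet of $P$ (since, in the reduction, $M(:,j) = f(s_j) = C s_j + d \ge 0$, and a zero coordinate signals membership in a facet-defining hyperplane). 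So the condition "$s_i \nsubseteq s_j$ for all $i\ne j$" says precisely that no vertex of $S$ lies on a subset of the facets that another vertex lies on; in particular each point of $S$ lies on at least one edge of $P$ that contains no other point of $S$ lying in more facets.

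The main step is the combinatorial argument showing that each of the $n$ points of $S$ forces its own vertex in $T$. I would mimic the argument sketched for the Aggarwal et al.\@ algorithm: for a point $s_j$ lying on an edge $e_j$ of the boundary of $P$ and not contained in the facet-set of any other point, one can exhibit a line through $s_j$ (tangent to $\conv(S)$ at $s_j$, say along the supporting line at $s_j$) that cuts off from $P$ a small region $Q_j$ containing $s_j$ in its relative interior (or at least with the property that $Q_j\cap S=\{s_j\}$), such that any feasible $T$ must contain a vertex inside $Q_j$ — otherwise $s_j$ could not lie in $T$, since $s_j$ is an extreme point of $S$ hence not a convex combination of the other points, and the edges of $T$ crossing near $s_j$ would have to dip outside $P$. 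The disjointness hypothesis is exactly what guarantees these $n$ regions $Q_1,\dots,Q_n$ are pairwise disjoint: if two points shared all their incident facets they might be forced into the same cut-off region, but disjoint sparsity patterns prevent this. Pairwise-disjoint regions each containing a vertex of $T$ gives $\#\vertices(T)\ge n$, hence $\rank_+^*(M)=n$.

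Finally I would verify that linear EDM's satisfy the hypothesis: they have rank three (as shown in \cite{BL09} and recalled in the excerpt), are nonnegative and square of dimension $n$, and since $a_i\ne a_j$ for $i\ne j$ the only zero entry in column $j$ is the diagonal entry $M(j,j)=0$, so $s_j=\{j\}$ and the singletons $\{1\},\dots,\{n\}$ are trivially pairwise non-nested. The main obstacle I anticipate is making the "each point forces its own vertex in a disjoint region" argument fully rigorous — one must carefully handle points of $S$ lying at vertices of $P$ (on two facets at once) versus in the relative interior of an edge, argue that the supporting lines / tangent segments genuinely delimit disjoint pockets, and invoke the extremality of each $s_j$ in $\conv(S)$ (which follows from $S$ affinely spanning $P$ together with disjointness, or must be assumed / derived) to conclude no single vertex of $T$ can serve two pockets. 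I would organize this as: (1) set up the reduction and translate sparsity patterns into facet incidences; (2) observe each $s_j$ is a vertex of $\conv(S)$; (3) construct the pockets $Q_j$ and prove disjointness from the hypothesis; (4) prove each $Q_j$ contains a vertex of any feasible $T$; (5) conclude $n\le\rank_+^*(M)\le n$; (6) check linear EDM's.
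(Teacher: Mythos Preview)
Your proposal is correct and follows essentially the same route as the paper: reduce to the two-dimensional NPP via Theorem~\ref{equiv}, translate the disjoint sparsity pattern hypothesis into the geometric statement that the $n$ points of $S$ lie on pairwise distinct edges of $P$, and then invoke the Aggarwal et al.\@ observation that each such point carves out a disjoint ``pocket'' with the boundary of $P$ that must contain a vertex of any feasible $T$, forcing $\#\vertices(T)\ge n$. The paper's write-up is in fact terser than yours---it simply cites the Aggarwal et al.\@ observation and does not separately verify that each $s_j$ is extreme in $\conv(S)$ or worry about points sitting at vertices of $P$---so your anticipated obstacles are places where you would be more careful than the published proof, not genuine gaps.
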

\begin{proof}
Let $P$, $S$ and $T$ be the polygons defined in the two-dimensional NPP instance corresponding to the RNR instance of $M$ (cf.\@ Theorem~\ref{equiv}). Aggarwal et al.\@ \cite{ABOS89} observe that if two points in $S$ are on different edges of $P$, they define a polygon with the boundary of $P$ (see each dark regions in Figure~\ref{eucill}) which must contain a point of the solution $T$. Otherwise these two points could not be contained in $T$ (see also Section~\ref{r3m}). 
\begin{figure*}[ht!]
\begin{center}
\includegraphics[width=7cm]{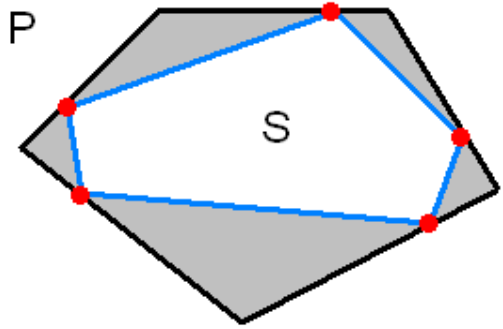}
\caption{Illustration of the restricted nonnegative rank of a linear EDM of dimension $5$. The solution $T$ must contain a point in each dark region, that is $\rank_+^*(M) = |T| = |S| = 5$.}
\label{eucill}
\end{center}
\end{figure*}
Therefore if each point of $S$ is on a different edge of the boundary of $P$, any solution $T$ to NPP must have at least $|S| = n$ vertices since $S$ defines $n$ disjoint  polygons with the boundary of $P$.  
Finally, two points $x_1$ and $x_2$ in $S$ are on different edges of the boundary of the polytope $P = \{\, x \in \mathbb{R}^{2} \, | \, Cx + d \geq 0\}$ if and only if $(Cx_1 + d)$ and $(Cx_2 + d)$ have disjoint sparsity patterns or, equivalently, if and only if the two corresponding columns of $M$ (which are precisely equal to $Cx_1 + d$ and $Cx_2 + d$) in the RNR instance have disjoint sparsity patterns. 
Indeed, for two vertices $a$ and $b$ to be located on different edges, one needs at least 
(1) one inequality that is active at $a$ and inactive at $b$ and
(2) another inequality that is active at $b$ and inactive at $a$. This is equivalent to requiring the sparsity patterns of the corresponding columns of the slack matrix to be disjoint. 
\end{proof}

\begin{remark} \label{maxRNR}
This result does not hold for higher rank matrices. For example, the matrix
\[
M  =  
\left( \begin{array}{cccccc}
     0   &  1  &   4   & 9   & 16 &   25\\
     2  &   0  &   1 &    4   &  9  &  16\\
     8   &  1  &   0 &    1   &  4  &   9\\
     13  &   4  &   1  &   0   &  1  &   4\\
    17   &  9   &  4  &   1   &  0  &   1\\
    25  &  16   &  9  &   4    & 1  &   0
\end{array} \right) 
  =  UV, \text{ with } U = \left( \begin{array}{ccccc}
		    0   &  0  &   4  &   5   &  1 \\
        1   &  0  &   1  &   3   &  0\\
        4   &  0   &  0   &  1    & 1\\
        4   &  1   &  0  &   0   &  1\\
        1   &  3  &   1  &   0   &  0\\
        0   &  5  &   4  &   0   &  1
      \end{array} \right), V = 
     \left( \begin{array}{cccccc}
				 2   &  0   &  0  &   0   &  0   &  1\\
         5  &   3  &   1  &   0   &  0 &     0\\
         0  &   0  &   1  &   1   &  0  &   0\\
         0  &   0  &   0  &   1   &  3   &  5\\
         0   &  1  &   0  &   0   &  1   &  0    \end{array} \right),
\]
has $\rank(M) = 4$ and $\rank_+^*(M) \leq 5$ since $\rank(U) = 4$. Therefore we cannot conclude that higher dimensional Euclidean distance matrices have maximal restricted nonnegative rank. 
\end{remark}

\subsubsection{Nonnegative Rank of Linear Euclidean Distance Matrices}

Since linear EDM's are rank-three symmetric matrices, one can combine the results of Theorem~\ref{rnrLEM} with Corollary~\ref{cor3} (cf.\@ Equation~\eqref{boundr3}) and  Corollary~\ref{corolrp} in order to obtain lower bounds for the nonnegative rank of linear EDM's. 
\begin{corollary} For any linear Euclidean distance matrix $M$, we have 
\begin{eqnarray*}
\rank_+^*(M) = n & \leq & \max_{3 \leq r_u \leq r_+-1} \; \min_{i=0,1} \; \faces(r_+,r_u-1,r_u-r+i)\\ 
& \leq & \max_{3 \leq r_u \leq r_+-1} \faces(r_+,r_u-1,r_u-r) = \phi'(r,r_+) \\
& \leq & \binom{r_+}{\lfloor r_+/2 \rfloor} \\
& \leq & 2^{r_+}. 
\end{eqnarray*}
 \end{corollary}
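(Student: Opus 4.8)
The plan is to chain together three ingredients that are all already available in the excerpt: the equality $\rank_+^*(M)=n$ for linear EDM's (Theorem~\ref{rnrLEM}), the sharpened upper bound on the restricted nonnegative rank for rank-three matrices (Corollary~\ref{cor3}, Equation~\eqref{boundr3}), and the symmetric strengthening (Corollary~\ref{corolrp}), together with the purely combinatorial estimates on $\faces(n,d,k)$ from Theorem~\ref{allbounds}. Concretely, a linear EDM $M$ of dimension $n$ has rank $r=3$ (as recalled when linear EDM's are introduced), is symmetric, and by Theorem~\ref{rnrLEM} satisfies $\rank_+^*(M)=n$. So the left-hand equality $\rank_+^*(M)=n$ is immediate, and everything to the right is an upper bound on that same quantity.

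First I would invoke Corollary~\ref{cor3}: since $r=\rank(M)=3$, we have
\[
\rank_+^*(M) \;\le\; \max_{3 \le r_u \le r_+} \;\min_{i=0,1}\; \faces(r_+,r_u-1,r_u-3+i),
\]
and because $M$ is symmetric with $r_+=\rank_+(M)\ge r+1$ (the case $r_+=r$ is trivial, giving $\rank_+^*(M)=r_+=r=3\ge n$ only when $n=3$, which we can dispatch separately or absorb into the bound since then all four right-hand expressions also equal or exceed $3$), Corollary~\ref{corolrp} lets us restrict the range to $3\le r_u\le r_+-1$; this is exactly the symmetric version of the rank-three bound alluded to after Corollary~\ref{cor3}. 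That yields the first displayed inequality, $n\le \max_{3\le r_u\le r_+-1}\min_{i=0,1}\faces(r_+,r_u-1,r_u-r+i)$.

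Next, for the second inequality I simply drop the inner minimization in favour of the $i=0$ term, i.e. $\min_{i=0,1}\faces(r_+,r_u-1,r_u-r+i)\le \faces(r_+,r_u-1,r_u-r)$, so the quantity is bounded by $\max_{3\le r_u\le r_+-1}\faces(r_+,r_u-1,r_u-r)=\phi'(r,r_+)$, which is the definition recalled in Corollary~\ref{corolrp}. For the last two inequalities I would appeal directly to Theorem~\ref{allbounds}: enlarging the maximization range back to $r\le r_u\le r_+$ only increases the value, so $\phi'(r,r_+)\le \phi(r,r_+)\le \binom{r_+}{\lfloor r_+/2\rfloor}\le 2^{r_+}$, using the chain of inequalities proved there (face count $\le$ binomial coefficient, maximality of the central binomial coefficient, and the crude $2^{r_+}$ bound). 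Since $r=3$ throughout, writing $r$ or $3$ in the $\faces$ arguments is the same thing, matching the corollary statement verbatim.

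I do not expect any real obstacle here — the statement is essentially an assembly of prior results — but the one point requiring a word of care is the hypothesis $r_+\ge r+1$ needed to apply Corollary~\ref{corolrp}. If $\rank_+(M)=\rank(M)=3$ then $\rank_+^*(M)=3$ by the remark following the definition of RNR, which forces $n=3$ (since $\rank_+^*(M)=n$ by Theorem~\ref{rnrLEM}), and in that degenerate case one checks directly that $3\le\faces(3,r_u-1,\cdot)$ and $3\le\binom{3}{1}=3\le 2^3$, so all the inequalities still hold. For $n\ge 4$ we automatically have $r_+\ge r+1$, so the symmetric refinement applies as stated and the proof is just the three-line citation chain above.
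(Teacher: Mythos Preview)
Your proposal is correct and follows essentially the same approach as the paper, which does not give a separate proof but simply presents the corollary as the combination of Theorem~\ref{rnrLEM} (giving $\rank_+^*(M)=n$), Corollary~\ref{cor3} (the rank-three sharpening), Corollary~\ref{corolrp} (the symmetric restriction of the range to $r_u\le r_+-1$), and the combinatorial chain from Theorem~\ref{allbounds}. Your treatment is in fact more careful than the paper's, since you discuss the degenerate case $r_+=r=3$; note however that in that case the range $3\le r_u\le r_+-1$ is empty, so the first displayed maximum is not literally well-defined --- this is a minor looseness in the statement itself rather than in your argument, and for $n\ge 4$ (hence $r_+\ge 4$) everything goes through exactly as you wrote.
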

We observe that our results (first two inequalities above, from Theorem~\ref{Thbound1} and Corollary~\ref{cor3}) strengthen the bounds from Equations \eqref{beas} (Beasley and Laffey~\cite{BL09}) and \eqref{goem} (Goemans~\cite{G09}). Figure~\ref{ub3} displays the growth of the different bounds, and Table~\ref{boundNR} compares the lower bounds on the nonnegative rank for small values of $n$. 
\begin{figure*}[ht!]
\begin{center}
\includegraphics[width=12cm]{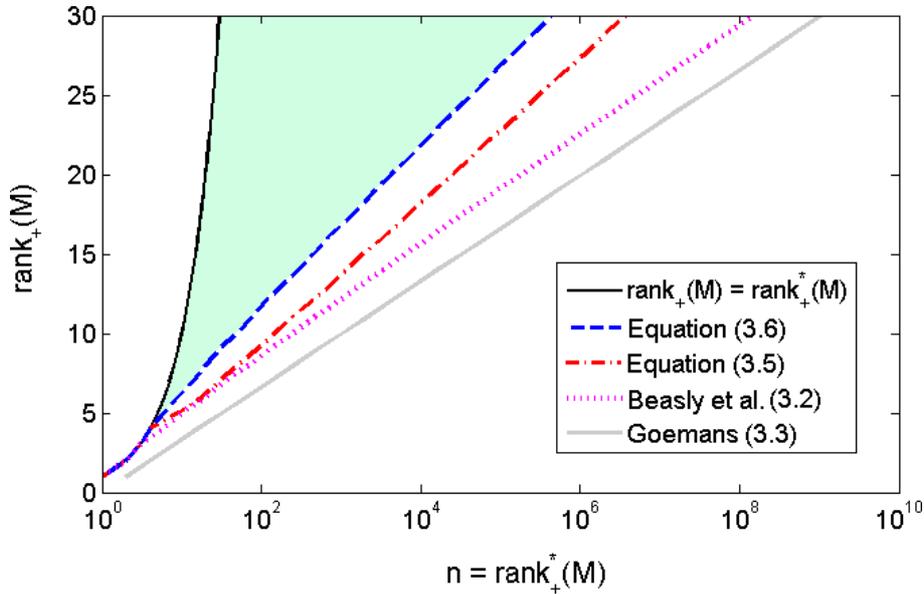}
\caption{Comparison of the different bounds for symmetric $n$-by-$n$ matrices, with $\rank_+^*(M) = n$.}
\label{ub3}
\end{center}
\end{figure*}
\begin{table}[ht!]
\begin{center}
\begin{tabular}{|c|ccccccc|}
\hline
dimension $n$  &         4 &5  & 6 & 7 & 8 & 9 & 10 \\ \hline
Equation~\eqref{boundr3} & 4 &5  & 5 & 6 & 6  & 6 & 7 \\
Equation~\eqref{bound1} &  4 &5  & 5 & 5 & 5  & 5 & 6 \\
Beasly and 
    Laffey \eqref{beas} &4 &4  & 4 & 5 & 5  & 5 & 5 \\
Goemans \eqref{goem}    &3 &3  & 3 & 3 & 4  & 4 & 4\\
\hline
\end{tabular}
\caption{Comparison of the lower bounds for the nonnegative rank of linear EDM's.}
\label{boundNR}
\end{center}
\end{table}
For example, for a linear EDM to be guaranteed to have nonnegative rank 10, the bounds requires respectively $n = 50$ \eqref{boundr3}, $n = 150$ \eqref{bound1}, $n = 252$ \eqref{beas} and  $n = 1024$ \eqref{goem}. 
This is a significant improvement, even though all the bounds are still of the same order with $r_+ \in  \Omega(\log(n))$. \\

Is it possible to further improve these bounds? Beasley and Laffey \cite{BL09} conjectured that the nonnegative rank of linear EDM's is maximum, i.e., it is equal to their dimension. Lin and Chu \cite[Theorem 3.1]{CL10} claim to have proved that this equality always holds, which cannot be correct because of the following example\footnote{In their proof, they actually show that the restricted nonnegative rank is maximum (not the nonnegative rank), see Theorem~\ref{rnrLEM}. In fact, they only consider the case when the vertices of the solution $T$ (corresponding to the columns of $U$) belong to the low-dimensional affine subspace defined by $S$ (corresponding to the column of $M$) in the NPP instance.}. 

\begin{example} \label{ex3} 
Taking 
$M \in \mathbb{R}^{6 \times 6}_+$ with 
\[
M(i,j) = (i-j)^2, \quad 1 \leq i, j \leq 6,
\] 
gives $\rank_+(M) = 5$. In fact, 
\begin{eqnarray} 
M   =   
\left( \begin{array}{cccccc}
     0   &  1  &   4   & 9   & 16 &   25\\
     1  &   0  &   1 &    4   &  9  &  16\\
     4   &  1  &   0 &    1   &  4  &   9\\
     9  &   4  &   1  &   0   &  1  &   4\\
    16   &  9   &  4  &   1   &  0  &   1\\
    25  &  16   &  9  &   4    & 1  &   0
\end{array} \right) 
  & = &  \left( \begin{array}{ccccc}
		  5     &  0  &   4  &   0    &  1 \\
      3     &  0  &   1  &  1    &  0\\
      1     &  0   &  0   & 4     & 1\\
      0     &  1   &  0  &  4    &  1\\
      0      &  3  &   1  & 1     &  0\\
      0     &  5  &   4  &  0    &  1
      \end{array} \right)
     \left( \begin{array}{cccccc}
        0   &  0   &  0    & 1    & 3   &  5\\
         5  &   3  &   1 &    0  &   0&     0\\
         0  &   0  &   1  &   1   &  0  &   0\\
          1 &    0 &    0 &    0  &   0  &   1\\
         0   &  1  &   0  &   0   &  1   &  0    \end{array} \right), \label{cex}\\
& = &  \left( \begin{array}{ccccc}
    5    &  0  &  1  &  0   &      0\\
    3   & 0    &     0  &  1    &     0\\
    1   &  0    &     0    &0   & 1\\
    0   &  1   &      0    &     0   & 1\\
    0   & 3    &     0     &    1   &      0\\
    0   &   5   & 1     &    0  &       0
          \end{array} \right)
     \left( \begin{array}{cccccc}
     0 &        0    &     0  &  1 &   3  &  5\\
        5  &  3  &  1   &      0      &   0    &     0\\
    0  &  1  &  4  &  4   & 1 &   0\\
         1  &  0  &  1 &   1  &  0   & 1\\
    4   & 1  &  0     &    0  &  1  &  4 \end{array} \right), \label{cex3}
\end{eqnarray}
so that $\rank_+(M) \leq 5$, and $\rank_+(M) \geq 5$ is guaranteed by Equation~\eqref{boundr3}, see Table~\ref{boundNR} with $\rank_{+}^*(M) = n = 6$ (or by Lemma~\ref{rMrU}, see Example~\ref{ex2}). 
\end{example} 

Example~\ref{ex3} proves that linear EDM's do not necessarily have a nonnegative rank equal to their dimension. In fact, we can even show that
\begin{theorem} \label{nrldm}
Linear EDM's of the following form 
\[
M_n(i,j) = (i-j)^2 \quad 1 \leq i,j \leq n,
\]
satisfy 
\[
\rank_+(M_n) \leq  2+  \Big\lceil \frac{n}{2} \Big\rceil ,
\]
where $\lceil x \rceil$ is the smallest integer greater or equal to $x$. 
\end{theorem}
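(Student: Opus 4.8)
The plan is to prove the bound by exhibiting an explicit nonnegative factorization $M_n = UV$ with $U\in\mathbb{R}^{n\times k}_+$, $V\in\mathbb{R}^{k\times n}_+$ and $k = 2 + \lceil n/2\rceil$; the estimate on $\rank_+(M_n)$ is then immediate from the definition. The design of the factorization is driven by one structural feature of $M_n$: it is \emph{centrosymmetric}, since $M_n(i,j) = ((n{+}1{-}i)-(n{+}1{-}j))^2 = M_n(n{+}1{-}i,\,n{+}1{-}j)$. This suggests an ansatz in which $U$ consists of two ``ramp'' columns together with $\lceil n/2\rceil$ ``pairing'' columns, one for each class $\{\ell,\,n{+}1{-}\ell\}$, $\ell=1,\dots,\lceil n/2\rceil$ (the pairing column is the indicator of that class, which is $e_\ell$ when $\ell=n{+}1{-}\ell$). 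A concrete instance for $n=6$ is already displayed in Example~\ref{ex3}.

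Because the classes $\{\ell,n{+}1{-}\ell\}$ partition $\{1,\dots,n\}$, row $i$ of $UV$ picks up exactly one pairing term, indexed by $\ell(i):=\min(i,n{+}1{-}i)$. Writing $u_1,u_2$ for the ramp columns of $U$ and $v_1,v_2,x_1,\dots,x_{\lceil n/2\rceil}$ for the rows of $V$, the matrix identity $M_n=UV$ collapses to the scalar identities
\[
(i-j)^2 \;=\; u_1(i)\,v_1(j) + u_2(i)\,v_2(j) + x_{\ell(i)}(j), \qquad 1\le i,j\le n .
\]
I would fix the ramps as $u_1(i)=\max(n{+}1{-}2i,0)$ and $u_2(i)=\max(2i{-}n{-}1,0)$, so that for a ``small'' row $i<(n{+}1)/2$ one has $u_2(i)=0$ and $\ell(i)=i$, leaving $x_i(j)=(i-j)^2-(n{+}1{-}2i)\,v_1(j)$; ``large'' rows $i>(n{+}1)/2$ are then forced by centrosymmetry (this also forces $v_2$ to be the reversal of $v_1$), and for odd $n$ the middle row forces $x_{(n+1)/2}(j)=((n{+}1)/2-j)^2$ directly.

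The crux is to find a \emph{single} nonnegative vector $v_1\in\mathbb{R}^n_+$ for which the residual $x_i(j)=(i-j)^2-(n{+}1{-}2i)v_1(j)$ is simultaneously (a) nonnegative and (b) invariant under $j\mapsto n{+}1{-}j$ (so that it is compatible with the pairing structure) for \emph{every} small $i$. Requiring (b) and using the algebraic identity
\[
(i+j-n-1)^2 - (i-j)^2 \;=\; (n{+}1{-}2i)(n{+}1{-}2j)
\]
reduces condition (b) to $v_1(n{+}1{-}j)-v_1(j)=n{+}1{-}2j$, which is met by the nonnegative choice $v_1(j)=\max(2j{-}n{-}1,0)$ (and then $v_2(j)=\max(n{+}1{-}2j,0)$, as anticipated). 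With this $v_1$, the same identity shows that $x_i(j)$ equals $(i-j)^2$ when $2j\le n{+}1$ and $(i+j-n-1)^2$ when $2j\ge n{+}1$, i.e.\ $x_i(j)=(i-\min(j,n{+}1{-}j))^2$ in all cases — a nonnegative integer, symmetric in $j$. Hence all the required scalar identities hold and all entries of $U$ and $V$ are nonnegative.

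What remains is bookkeeping: assemble $U$ and $V$ from these formulas, check that the number of columns is $k=2+\lceil n/2\rceil$ (two ramps, plus $\lfloor n/2\rfloor$ pairing columns, plus one extra middle column when $n$ is odd), verify $M_n=UV$ by the case split on $i$ being small, large, or middle, and conclude $\rank_+(M_n)\le k$. I expect the only real obstacle to be the discovery of the correct $v_1$: since all rows of $M_n$ must be expressed through the same $v_1$, an ad hoc column-by-column splitting does not suffice, and one needs the displayed identity to see both why $v_1(j)=\max(2j{-}n{-}1,0)$ works uniformly and why the residual stays a perfect square; once that choice is in hand the nonnegativity, the parity cases, and the count are routine.
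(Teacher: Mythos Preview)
Your construction is correct and coincides with the paper's: for even $n$ your ramps $u_1,u_2$ and pairing columns reproduce exactly the block matrix $U=[\text{ramps}\mid I_{n/2}\text{ over }P_{n/2}]$, and your $v_1,v_2,x_\ell$ reproduce the paper's $V$ (with $x_\ell$ giving the rows of $M_{n/2}$ alongside $P_{n/2}M_{n/2}$). The only difference is cosmetic---you present the factorization via closed formulas and the identity $(i{+}j{-}n{-}1)^2-(i{-}j)^2=(n{+}1{-}2i)(n{+}1{-}2j)$, and you handle odd $n$ directly with a middle singleton column, whereas the paper writes $U,V$ in block form and dispatches odd $n$ by the one-line observation that $M_n$ is a submatrix of $M_{n+1}$.
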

\begin{proof}
Let first assume that $n$ is even and define 
\[
U = 
\left( \begin{array}{cc|c} 
n-1    & 0 &  \\
n-3    & 0 & \\ 
\vdots & \vdots &  I_{n/2} \\
3  & 0 & \\
1  & 0 & \\
\hline
0  & 1  & \\
0 & 3 & \\
\vdots & \vdots &  P_{n/2} \\
0 & n-3 & \\
0 & n-1 & \\
\end{array} \right),
V = 
\left( \begin{array}{cc|c} 
0    & n-1 &  \\
0    & n-3 & \\ 
\vdots & \vdots &  M_{n/2} \\
0  & 3 & \\
0  & 1 & \\
\hline
1  & 0  & \\
3 & 0 & \\
\vdots & \vdots & P_{n/2} M_{n/2} \\
n-3 & 0 & \\
n-1 & 0 & \\
\end{array} \right)^T,
\]
where $I_{m}$ is the identity matrix of dimension $m$ and $P_{m}$ is the permutation matrix with $P_m(i,j) = I_m(i,m-j+1)$ $\forall i,j$; see Equation~\eqref{cex3} for an example when $n = 6$. One can check that
\[
M_n = UV = 
\left( \begin{array}{cc} 
M_{n/2} & A + P_{n/2} M_{n/2} \\
A^T + P_{n/2} M_{n/2} &  M_{n/2} 
\end{array} \right), \text{ with }
A = \left( \begin{array}{c} 
n-1 \\
n-3 \\
\vdots\\
3 \\
1 
\end{array} \right)
\left( \begin{array}{c} 
1 \\
3 \\
\vdots\\
n-3 \\
n-1 
\end{array} \right)^T. 
\]
If $n$ is odd, we simply observe that  $\rank_+(M_n) \leq \rank_+(M_{n+1}) \leq 2+  \frac{n+1}{2} = 2+  \lceil \frac{n}{2} \rceil$, since $M_n$ is a submatrix of $M_{n+1}$~\cite{CR93}. 
\end{proof}

\begin{remark}
In the construction of Theorem~\ref{nrldm}, one can check that $\rank(V) = 4$ and the factorization can then be interpreted as a nested polytopes problem (corresponding to $M^T = V^T U^T$) in which the outer polytope has (only) dimension 3. Therefore, there is still some room for improvement and $\rank_+(M_n)$ is probably (much?) smaller. 

This example also demonstrates that, in some cases, the structure of small size nonnegative factorizations (in this case, the one from Example~\ref{ex3}) can be generalized to larger size nonnegative factorization problems. This might open new ways to computing large nonnegative factorizations. 
\end{remark}

In Example~\ref{ex3}, the nonnegative rank is smaller than the restricted nonnegative rank because there exists a higher dimensional polytope with only 5 vertices whose convex hull encloses the 6 vertices defined by the columns of $M$. Nested polytopes instance corresponding to the RNR instance with $M$ given by Example~\ref{ex3} and the two above solutions are illustrated on Figures~\ref{euc3} and~\ref{euc356} respectively (note that they are transposed to each other, but correspond to different solutions of the NPP instance), see Section~\ref{geoNN}. 
Notice that the second solution (Figure~\ref{euc356}) completely includes the outer polytope $P$; therefore, the nonnegative rank of any nonnegative matrix with the same column space as the matrix $M$ will be at most 5. 
\begin{figure*}[ht!]
\begin{center}
\includegraphics[width=13cm]{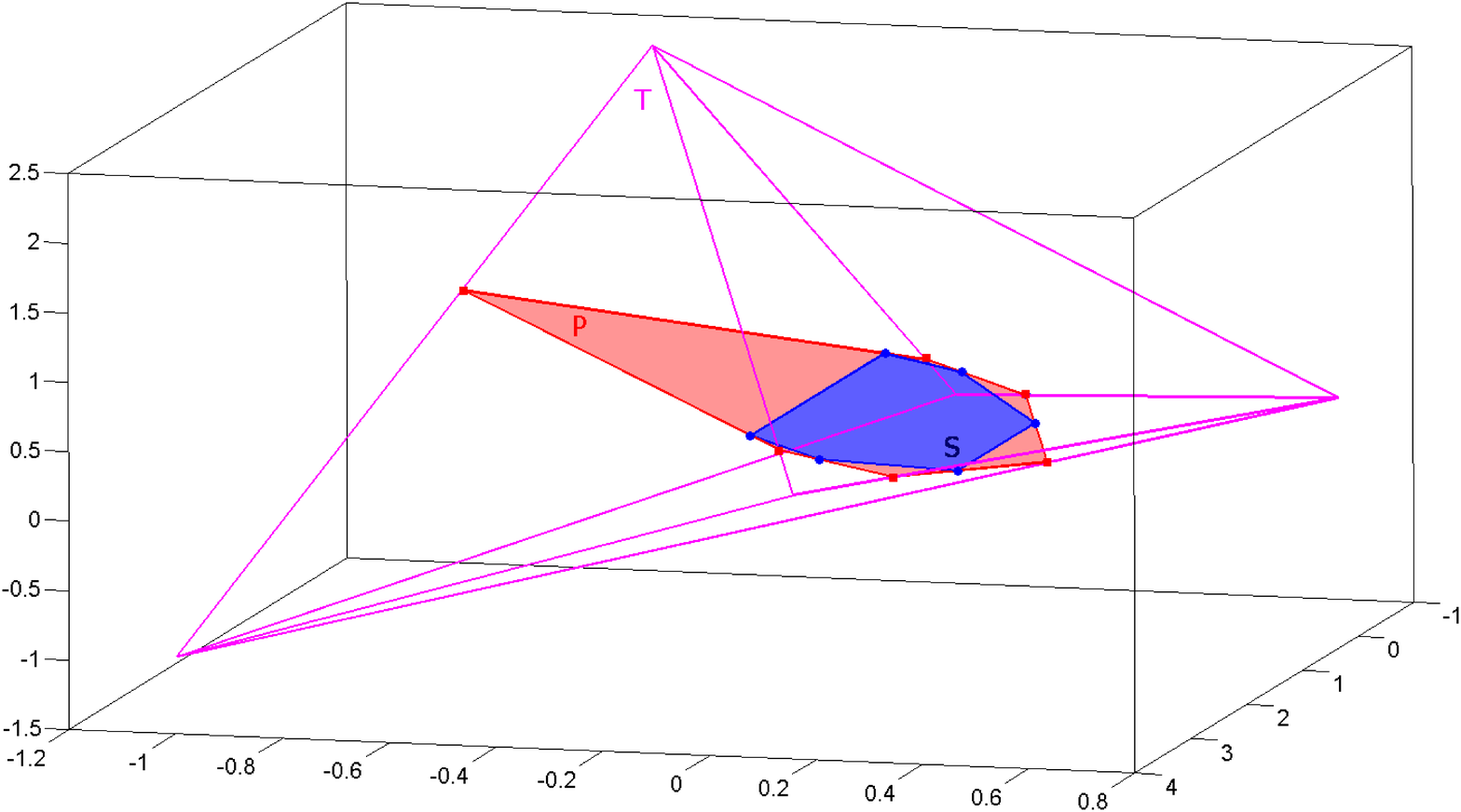}
\caption{Illustration of the solution from Equation~\eqref{cex3} as a nested polytopes problem, based on a linear EDM with $\rank(M) = 3 < \rank(U) = 4 < \rank_+(M) = 5 < \rank_+^*(M) = 6 = n$.}
\label{euc356}
\end{center}
\end{figure*}

The solutions of the above nonnegative rank problem have been computed with standard nonnegative matrix factorization algorithms \cite{LS99, CZA07} and, in general, the optimal solution is found after 10 to 100 restarts of these algorithms\footnote{These algorithms are based on standard nonlinear optimization schemes (rescaled gradient descent and block-coordinate descent), and require initial matrices $(U,V)$, which were randomly generated.}. 

We also observed than when the vectors $a$ in Equation~\eqref{LEM} used to construct the linear EDM's are chosen randomly, the nonnegative rank seems to be maximal (i.e., equal to the dimension of the matrix). In fact, even with 1000 restarts of the NMF algorithms with several random linear EDM's (of dimensions up to $n=12$, and using a factorization rank of $n-1$), every stationary point we could obtain had an error ($=\sum_{ij} (M-UV)_{ij}^2$) bounded away from zero. The following related question is still open: 
\begin{question}
Does there exist a nonnegative (symmetric?) $n \times n$ square matrix $M$ such that $\rank(M) = 3$ and $\rank_+(M) = n$, for each $n \geq 6$?
\end{question}
Table~\ref{boundNR} implies that linear EDM's with $n \leq 5$ satisfy this property\footnote{Recall we assumed $a_i \neq a_j \, \forall i \neq j$ so that such linear EDM's have rank three \cite{BL09}.}.\\

We adapt the conjecture of Beasley and Laffey \cite{BL09} as follows: 
\begin{conjecture}
Random linear EDM's of dimension $n$ are such that $\rank(M) = 3$ and $\rank_+(M) = n$ with probability one. 
\end{conjecture}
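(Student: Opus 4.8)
The plan is to translate the conjecture, via the geometric dictionary of Section~\ref{geoNN} and Theorem~\ref{equiv}, into a statement in real algebraic geometry, and then to show that the instances admitting a dimension-lifting shortcut form a Lebesgue-null set. Two ingredients are essentially free. First, writing $M(a)_{ij}=(a_i-a_j)^2$ we have $M(a)=a^{(2)}\mathbf 1^T-2aa^T+\mathbf 1(a^{(2)})^T$ with $a^{(2)}=(a_i^2)_i$, so $\col(M(a))\subseteq\operatorname{span}\{\mathbf 1,a,a^{(2)}\}$, and these three vectors are independent whenever the $a_i$ are pairwise distinct; since $\{a:a_i=a_j\text{ for some }i\neq j\}$ is null, $\rank(M)=3$ with probability one, and we are in the regime where Theorem~\ref{bornernr}, Corollary~\ref{cor3} and Corollary~\ref{corolrp} apply and $\rank_+^*(M)=n$ by Theorem~\ref{rnrLEM}. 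Second, by Section~\ref{geoNN} together with Theorem~\ref{rnrLEM}, $\rank_+(M(a))<n$ holds if and only if there is a \emph{lifting}: an integer $r_u$ with $4\le r_u\le \rank_+(M)-1$ (Lemma~\ref{rMrU}, symmetric case), a full-dimensional polytope $T\subset\mathbb R^{r_u-1}$ with fewer than $n$ vertices, and a two-dimensional affine flat $H$ such that the inscribed polygon $\conv(S(a))$ lies in $H$, $\conv(S(a))\subseteq T$, and the slice $T\cap H$ lies in the outer polygon $P(a)$. Here $P(a)$ and $\conv(S(a))$ are the circumscribed/inscribed polygon pair of $n$ points $s_i$ on a parabola (the image of $a_i$), with $s_i$ the tangency point on edge $i$ of $P(a)$; because these tangency points sit on distinct edges, the argument of Aggarwal et al.\@ (Section~\ref{r3m}) forces $\#\vertices(T\cap H)\ge n$, so a lifting is an $(r_u-1)$-polytope with $<n$ vertices whose planar slice has $\ge n$ vertices --- exactly the possibility left open by Corollary~\ref{cor3} and realized for arithmetic-progression data by Theorem~\ref{nrldm}.

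The substance of the proof is then to show that for every fixed $k<n$ the set $B_k=\{a\in\mathbb R^n:\rank_+(M(a))\le k\}$ is Lebesgue-null. The set $\{(a,U,V):M(a)=UV,\ U\in\mathbb R^{m\times k}_+,\ V\in\mathbb R^{k\times n}_+\}$ is semialgebraic, hence so is its projection $B_k$, and it suffices to show $B_k$ has empty interior. The natural route is a stratified dimension count: decompose the candidate liftings by combinatorial type $\tau$ (the face lattice of $T$, the dimension $r_u-1$, and the pattern of faces of $T$ crossed by the slicing flat $H$), of which there are finitely many; for each $\tau$ parametrize the pairs $(T,H)$ of that type, write the finitely many polynomial conditions ``$s_i(a)$ lies in its prescribed face of $T$'' and ``the prescribed vertex of $T\cap H$ lies on its prescribed edge of $P(a)$'', eliminate the lifting parameters, and show that the resulting real-algebraic locus in $a$-space has positive codimension --- for instance by a Jacobian/transversality computation at one well-chosen $a^{\ast}$, or by exhibiting an explicit nonzero polynomial in $a$ (perhaps built from cross-ratios of the $a_i$) that must vanish whenever a type-$\tau$ lifting exists. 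Taking the union over the finitely many $\tau$ and over $k<n$ would give $\bigcup_{k<n}B_k$ null, hence $\rank_+(M(a))=n$ almost surely, which together with $\rank(M)=3$ a.s.\@ is the conjecture.

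The main obstacle is precisely this last step. Unlike a finite combinatorial argument, liftings occur in genuine continuous families: the arithmetic-progression construction of Theorem~\ref{nrldm} is one such family, dropping $\rank_+$ all the way to $\lceil n/2\rceil+2$, and small perturbations preserve its combinatorial type. Thus one cannot merely count dimensions naively; one must show that on the $(n-2)$-dimensional moduli of $n$-tuples on a parabola the ``fold'' available for centrally symmetric data cannot be propagated to a generic configuration, and in particular that the extra freedom in choosing $T$ and the outer polytope does not compensate for the lost symmetry. I expect this to require either a careful transversality argument carried out stratum by stratum, or the identification of an algebraic invariant of $a$ whose vanishing exactly characterizes the existence of a low-vertex lifting; producing such an invariant, or establishing the transversality, is the crux and is why the statement is stated as a conjecture. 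A more modest intermediate goal along the way --- already of interest --- would be to prove the weaker assertion that $B_k$ is null only for each fixed lift dimension $r_u$ (say $r_u=4$, corresponding to three-dimensional intermediate polytopes), which should be the most tractable case and the one where the arithmetic-progression family lives.
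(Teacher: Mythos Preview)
The statement is posed in the paper as an open \emph{conjecture}; the authors give no proof, only numerical evidence from repeated runs of NMF algorithms on random linear EDM's up to $n=12$. There is therefore no paper proof to compare your attempt against --- you are proposing an attack on an open problem.

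Your semialgebraic framing is correct as far as it goes: $B_k=\{a:\rank_+(M(a))\le k\}$ is semialgebraic by Tarski--Seidenberg, empty interior of a semialgebraic set in $\mathbb R^n$ implies Lebesgue-nullity, the stratification by combinatorial type $(r_u,\text{face lattice of }T,\text{slicing pattern})$ is finite for fixed $n$, and your translation of $\rank_+(M)<n$ into the existence of a lifted polytope $T$ with $<n$ vertices whose planar slice nonetheless has $\ge n$ vertices is exactly the combination of Theorem~\ref{bornernr}, Theorem~\ref{rnrLEM} and Lemma~\ref{rMrU}.

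But the gap you yourself flag is not a technicality to be filled in --- it is the whole content of the conjecture. To show that each stratum projects to a lower-dimensional set in $a$-space you must rule out that the free parameters in the lifting (the vertex coordinates of $T$ and the embedding of the plane $H$) can absorb an open set of perturbations of $a$, and nothing in your outline does this. A crude parameter count is not favorable: $T$ carries on the order of $k(r_u-1)$ free coordinates, which for $r_u\ge 4$ can already exceed the $n$ coordinates of $a$, so dimension counting alone cannot close the argument without a genuine geometric obstruction. A Jacobian check at a single $a^\ast$ controls only one stratum locally, while the number of strata grows superexponentially in $n$; and the proposed cross-ratio invariant is pure speculation --- no such invariant is known, and exhibiting one would itself constitute the proof. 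What you have written is an accurate explanation of \emph{why} the statement remains a conjecture, and a reasonable organization of a possible attack, but it is not a proof, and you correctly say so yourself.
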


\subsection{The Nonnegative Rank of a Product}


Beasley and Laffey \cite{BL09} proved that for $A = BC$ with $A, B$ and $C \geq 0$ 
\[
\rank_+(A) \leq \rank(B) \rank(C). 
\]
In particular, $\rank_+(A^2) \leq \rank(A)^2$. They also conjectured that for a nonnegative $n \times n$ matrix $A$,
\[
\rank_+(A^2) \leq \rank(A),
\]
which we prove to be false with the following counterexample (based on a circulant matrix)
\begin{equation} \label{cexA2}
A =  \left( \begin{array}{cccccccc}
         0 &    1  &  a  &  1+a  &  1+a  &   a &1 & 0 \\
         0 &   0  &   1  &  a  &  1+a  &   1+a &a & 1\\
    		1 &   0  &   0   &  1  &  a  &   1+a &1+a & a \\
    		a &   1  &  0   &  0   &  1  &   a &1+a &  1+a\\
    		1+a &   a  &  1  &  0    &  0  &   1 &a &  1+a \\
   		1+a   &   1+a  &  a  &  1    &  0  &   0& 1 & a\\
    		a   &   1+a  &  1+a  &  a    &  1  &   0&0 & 1 \\
   		1   &   a  &  1+a  &  1+a    &  a  &   1 & 0 & 0
   \end{array} \right), 
\end{equation}
where $a=1+\sqrt{2}$. 
In fact, one can check that $\rank(A) = 3$ and $\rank_+(A^2) = 4$: indeed, $\rank_+^*(A^2) = 4$ can be computed with the algorithm of Aggarwal et al.\@ \cite{ABOS89} (see Figure~\ref{octagon} for an illustration) and, by Corollary~\ref{nr5}, $\rank_+(A^2) = \rank_+^*(A^2)$ since  $\rank_+^*(A^2) \leq \rank(A^2)+1 = 4$.  
\begin{figure*}[ht!]
\begin{center}
\includegraphics[width=8cm]{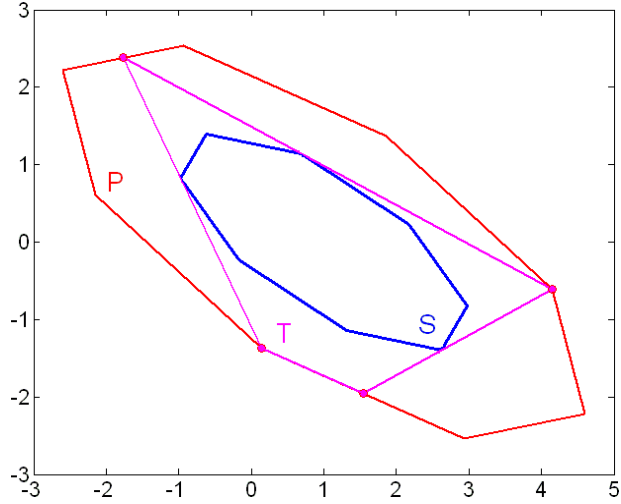}
\caption{Illustration of a NPP instance corresponding to $A^2$ and an optimal solution $T$, cf.\@  Equation~\eqref{cexA2}. See Appendix~\ref{2Drep} for the code used to perform the reduction.} 
\label{octagon}
\end{center}
\end{figure*}

\begin{remark}
The matrix $A$ from Equation~\eqref{cexA2} is the slack matrix of a regular octagon with sides of length $\sqrt{2}$. By Theorem~\ref{thSM}, we have $\rank_+^*(A) = 8$. Notice also that $A$ has rank 3 and its columns have disjoint sparsity patterns so that $\rank_+^*(A) = 8$ is implied by Theorem~\ref{rnrLEM} as well. What is the nonnegative rank of $A$? Defining 
\[
R =  \left( \begin{array}{cccccccc}
         1 &    1  &  0  &  0  &  0  &   0 &0 & 0 \\
         0 &   1  &   1  &  0  &  0  &   0 &0 & 0\\
    		0 &   0  &   1   &  1  &  0  &   0 &0 & 0 \\
    		0 &   0  &  0   &  1   &  1  &   0 &0 &  0\\
    		0 &   0  &  0  &  0    &  1  &   1 &0 &  0 \\
   		  0 &   0  &  0  &  0    &  0  &   1& 1& 0\\
    		0   &   0  &  0  &  0    &  0  &   0&1 & 1 \\
   		1   &   0 &  0  &  0    &  0 &   0 & 0 & 1
   \end{array} \right), 
\]
we have that $B = AR$ is symmetric, has rank 3 and only has zeros on its diagonal. By Theorem~\ref{rnrLEM}, $\rank_+^*(B) = 8$. Using Table~\ref{boundNR}, we have  $\rank_+(B) \geq 6$. Moreover
\[
\rank_+(AR) \leq \min(\rank_+(A),\rank_+(R)),
\]  
implying that  $6 \leq \rank_+(B) \leq \rank_+(A)$. Finally, $\rank_+(A) = 6$ because 
\begin{equation}
A = UV = 
 \left( \begin{array}{cccccc}
		    1   &  0  &   0  &   1   &  0  & a\\
        a   &  0  &   0  &   0   &  1 & a+1\\
        1   &  1   &  0   &  0    & 0 & a\\
        0   &  a-1   &  1  &   0   &  0 & 1\\
        0   &  1  &   a  &   0   &  1 &0\\
        1   &  0  &   a+1  &   0   &  a &0\\
        0   &  0  &   a  &   1   &  1 &0\\
        0   &  0  &   1  &   a-1   &  0 &1\\
      \end{array} \right) 
     \left( \begin{array}{cccccccc}
				 0   &  0   &  0  &   1   &  0   &  0 &  1 &  0\\
         1  &   0  &   0  &   0   &  0 &     1  &  a &  a\\
         1  &   1  &   0  &   0   &  0  &   0 &  0   &  0\\
         0  &   1  &   a  &   a   &  1   &  0 &  0   &  0\\
         0   &  0  &   1  &   0   &  0   &  0  &  0   &  1\\
         0   &  0  &   0  &   0   &  1   &  1  &  0   &  0
           \end{array} \right), \label{octaSol}
\end{equation}
with $\rank(U) = 4$ and $\rank(V) = 5$. 
Figure~\ref{octagonA} displays the corresponding nested polytopes problem, see Section~\ref{geoNN} and Appendix~\ref{3Drep}.  
\begin{figure*}[ht!]
\begin{center}
\includegraphics[width=\textwidth]{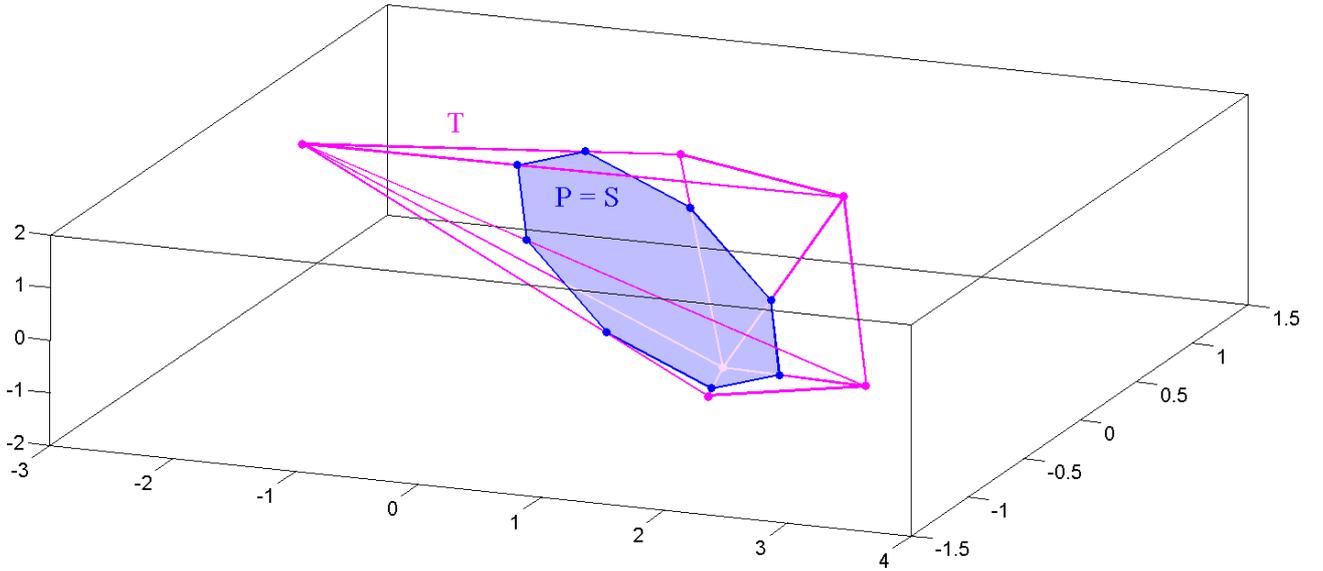}
\caption{Illustration of a nested polytopes instance corresponding to $A$ and an optimal solution, cf.\@  Equations~\eqref{cexA2} and~\eqref{octaSol}.} 
\label{octagonA}
\end{center}
\end{figure*}

It is interesting to observe that, from this nonnegative factorization, one can obtain an extended formulation (lifting) $Q$ of the regular octagon $P = \{ x \in R^2 \ | \ Cx \leq d \}$, defined as  $Q = \{ (x,y) \in \mathbb{R}^2 \times \mathbb{R}^6  \ | \ Cx + Uy = d, y \geq 0 \}$, with 
\[
C = \left( \begin{array}{cccccccc}
1  &  {\sqrt{2}}/{2}    &     0   &-{\sqrt{2}}/{2}  & -1  & -{\sqrt{2}}/{2}  &    0 &   {\sqrt{2}}/{2} \\
0   & {\sqrt{2}}/{2}  &  1  &  {\sqrt{2}}/{2}      &   0   & -{\sqrt{2}}/{2}  & -1  & -{\sqrt{2}}/{2}
           \end{array} \right)^T,
\]
and $d(i) = 1+\frac{\sqrt{2}}{2} \ \forall i$, see Equation~\eqref{extQ}. Since the system of equalities $Cx + Uy = d$ only defines 4 linearly independent equalities ($\rank([C \ U]) = 4$), the description of $Q$ can then be simplified and expressed with 4 variables and 6 inequality constraints. 

This extended formulation is actually a particular case of a construction proposed by Ben-Tal and Nemirovski \cite{BTN01} to find an extended formulation of size $\mathcal{O}(k)$ for the regular $2^k$-gon in two dimensions.
\end{remark}


\section{Concluding Remarks}

In this paper, we have introduced a new quantity called the restricted nonnegative rank, whose computation amounts to solving a problem in computational geometry consisting of finding a polytope nested between two given polytopes. 
This allowed us to fully characterize its computational complexity (see Table~\ref{complexNN}). This geometric interpretation and the relationship between the nonnegative rank and the restricted nonnegative rank also let us derive new improved lower bounds for the nonnegative rank, in particular for slack matrices and linear Euclidean distance  matrices. This also allowed us to provide counterexamples to two conjectures concerning the nonnegative rank. \\

We conclude the paper with the following conjecture: 
\begin{conjecture}
Computing the nonnegative rank and the corresponding nonnegative factorization of a nonnegative matrix is NP-hard when the rank of the matrix is fixed and greater or equal to 4 (or even possibly 3). 
\end{conjecture}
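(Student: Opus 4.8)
Since the final statement is a conjecture rather than a theorem, what follows is a proposed line of attack rather than a proof. The natural plan is to reduce from the nested polytopes problem (NPP) in its vertex-minimization form, which by Das et al.\ \cite{DJ90, DG97} is NP-hard already in dimension $3$, i.e.\ for $r-1=3$, hence $r=4$ (the case $r\geq 5$ following by a block-diagonal padding argument, using additivity of both $\rank$ and $\rank_+$ on block-diagonal matrices). Given an NPP instance $(P,S)$ with $P\subset\mathbb{R}^{3}$ and optimal value $k^{*}$, the reduction of Theorem~\ref{equiv} produces in polynomial time a nonnegative matrix $M$ with $\rank(M)=4$ and $\rank_+^{*}(M)=k^{*}$; this is exactly the content of Theorem~\ref{complRNR}. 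The obstruction to promoting this to a statement about $\rank_+(M)$ is that Theorem~\ref{equiv} only controls the \emph{restricted} rank, and Example~\ref{ex3} shows the two quantities can genuinely differ: a factorization $M=UV$ with $\rank(U)>\rank(M)$ corresponds geometrically (Section~\ref{geoNN}) to a higher-dimensional polytope $T$ with $k<k^{*}$ vertices whose intersection with the affine hull of $S$ still encloses $S$. A proof must therefore rule out this ``dimension-lift'' phenomenon for the matrices produced by the reduction.

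I see two routes to closing that gap. The first is to \emph{engineer the outer constraints} so that the unrestricted problem collapses onto the restricted one. Concretely, the columns of any stochastic $U$ lie in the simplex $\Delta^{m-1}=\{u\geq 0,\ \sum_i u_i=1\}$, and $\rank_+(M)$ equals the minimum number of vertices of a polytope $T$ with $\conv(\text{cols of }M)\subseteq T\subseteq\Delta^{m-1}$; a lift is profitable only when $\Delta^{m-1}$ is ``loose'' above the $(r-1)$-dimensional slice $P_m=\Delta^{m-1}\cap\col(M)$. One would append extra rows to $M$ (equivalently, extra facets of $P_m$ and extra simplex coordinates) designed so that $\Delta^{m-1}$ hugs $P_m$ everywhere except in the region where the NPP gadget forces vertices, thereby forcing $\rank(U)=\rank(M)$ in every minimal factorization, and hence $\rank_+(M)=\rank_+^{*}(M)=k^{*}$. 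The second, more robust route is to exploit an \emph{inapproximability} version of NPP together with Theorem~\ref{Thbound1}: since $\rank_+^{*}(M)\leq\phi(r,\rank_+(M))$ with $\phi(4,\cdot)$ an explicit increasing function, a lower bound $k^{*}$ on $\rank_+^{*}(M)$ already forces $\rank_+(M)\geq\phi(4,\cdot)^{-1}(k^{*})$; if the Das et al.\ reduction can be amplified to a gap version (hard to distinguish $k^{*}\leq t$ from $k^{*}\geq\phi(4,t)$), then deciding $\rank_+(M)\leq t$ is NP-hard directly, with no need to force a restricted factorization at all.

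I expect the main obstacle to be precisely the lack of any structural handle forcing $\rank(U)=\rank(M)$ once $\rank_+^{*}(M)$ is large. Corollary~\ref{nr5} delivers this only when $\rank_+^{*}(M)\leq\rank(M)+1$ (or $+2$ in the symmetric case), i.e.\ in essentially trivial instances, and the families with provably maximal restricted rank coming from Theorems~\ref{thSM} and~\ref{rnrLEM} are shown by Example~\ref{ex3} to sometimes admit a strictly smaller $\rank_+$. So the crux is either to design an ``orthant-tightening'' gadget that provably blocks every lift, or to prove a gap-amplified form of the NPP hardness that is robust to lifts via the $\phi$ bound. The rank-$3$ subcase looks strictly harder: RNR is then solvable in polynomial time (Theorem~\ref{exactNMFcompl}), so any hardness of $\rank_+$ for $r=3$ must originate entirely in the unconstrained lifting directions, for which no combinatorial skeleton analogous to the Aggarwal et al.\ structure is currently available — which is why the parenthetical ``or even possibly $3$'' is flagged as the more speculative half of the conjecture.
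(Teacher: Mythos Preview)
The statement is a conjecture, and the paper does not prove it; the only supporting text in the paper is the short plausibility paragraph immediately following the conjecture, which argues informally that since nonnegative rank corresponds to a nested polytopes problem whose outer polytope lives in an \emph{unknown} higher-dimensional space, the problem ought to be at least as hard as RNR, where the outer polytope is fixed and low-dimensional. You correctly recognise that no proof is available and offer a research plan instead.

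Your proposal is consistent with the paper's heuristic but goes substantially further. Both you and the paper identify the same obstruction: the gap between $\rank_+$ and $\rank_+^{*}$ caused by factorizations with $\rank(U)>\rank(M)$, exactly the ``dimension-lift'' phenomenon witnessed by Example~\ref{ex3}. The paper stops at saying this makes the unrestricted problem \emph{plausibly} harder; you additionally sketch two concrete attack routes (engineering extra facets to force $\rank(U)=\rank(M)$, or a gap-amplified NPP hardness combined with the bound $\rank_+^{*}(M)\leq\phi(r,\rank_+(M))$ from Theorem~\ref{Thbound1}), and you correctly flag that Corollary~\ref{nr5} only closes the gap in trivially small instances. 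Your remark that the rank-$3$ subcase must derive any hardness entirely from the lifting directions, since RNR is polynomial there, matches the paper's own hedging (``or even possibly $3$'') and its observation that $\rank_+(M)=\rank_+^{*}(M)$ is guaranteed whenever $\rank_+^{*}(M)\leq 5$. In short, there is nothing to correct: your plan subsumes the paper's informal justification and adds structure the paper does not attempt.
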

In fact, we have shown that computing a nonnegative factorization amounts to solving a nested polytopes problem in which the outer polytope might live in a higher dimensional space. Moreover, this space is not known a priori (we just know that it contains the columns of the matrix to be factorized, cf.\@ Section~\ref{geoNN}). Therefore, it seems plausible to assume that this problem is at least as difficult than the restricted nonnegative rank computation problem in which the outer polytope lives in the same low-dimensional space and is known. 
Moreover, even in the rank-three case, even though the inner polytope has dimension two, the outer polytope might have any dimension (up to the dimensions of the matrix; see, e.g., Figures~\ref{euc3} and \ref{euc356}); therefore, it seems that the nonnegative rank computation might also be NP-hard if the rank of the matrix is three. Notice that, when $\rank_+^*(M) \leq 5$, Equation~\eqref{bound1} implies $\rank_+(M) = \rank_+^*(M)$ so that the nonnegative rank can be computed in polynomial-time in this particular case.

Table~\ref{complexNN} recapitulates the complexity results for the restricted nonnegative rank and the nonnegative rank of a nonnegative matrix $M$. 
\begin{table}[ht!]
\begin{center}
\begin{tabular}{|c|c|c|}
\hline
$r = \rank(M)$ 	    &  $r_+^* = \rank_+^*(M)$ 				& 		$r_+ = \rank_+(M)$       \\
\hline
$r$ not fixed     & NP-hard     & NP-hard \cite{Vav} \\
$r \geq 4$ fixed     & NP-hard  (Theorem~\ref{complRNR})  & NP-hard? \\
$r = 3$   			  & polynomial (Theorem~\ref{exactNMFcompl}) 
			       & polynomial if $r_+^* \leq 5$  \\
   			  &  & otherwise NP-hard? \\
$r \leq 2$   			& trivial ($=r$) & trivial ($=r$) \cite{Tho} \\
\hline
\end{tabular}
\caption{Complexity of restricted nonnegative rank and nonnegative rank computations.}
\label{complexNN}
\end{center}
\end{table}

\subsection*{Acknowledgments}

We thank Mathieu Van Vyve and Didier Henrion for helpful discussions and advice. We also thank Santanu Dey for pointing our attention to the paper of Yannakakis \cite{Y91}.

\small 
\bibliographystyle{siam}
\bibliography{nnrank}

\begin{thebibliography}{10}

\bibitem{ABOS89}
{\sc A.~Aggarwal, H.~Booth, J.~O'Rourke, and S.~Suri}, {\em {Finding minimal
  convex nested polygons}}, Information and Computation, 83(1) (1989),
  pp.~98--110.

\bibitem{BL09}
{\sc L.B. Beasley and T.J. Laffey}, {\em {Real rank versus nonnegative rank}},
  Linear Algebra and its Applications, 431(12) (2009), pp.~2330--2335.

\bibitem{BTN01}
{\sc A.~Ben-Tal and A.~Nemirovski}, {\em On polyhedral approximations of the
  second-order cone}, Mathematics of Operations Research, 26(2) (2001),
  pp.~193--205.

\bibitem{BP73}
{\sc A.~Berman and R.J. Plemmons}, {\em {Rank factorization of nonnegative
  matrices}}, SIAM Review, 15(3) (1973), p.~655.

\bibitem{BBLPP07}
{\sc M.W. Berry, M.~Browne, A.N. Langville, V.P. Pauca, and R.J. Plemmons},
  {\em Algorithms and applications for approximate nonnegative matrix
  factorization}, Computational Statistics and Data Analysis, 52 (2007),
  pp.~155--173.

\bibitem{BC96}
{\sc J.~Bhadury and R.~Chandrasekaran}, {\em {Finding The Set of All Minimal
  Nested Convex Polygons}}, in Proceedings of the 8th Canadian Conference on
  Computational Geometry, 1996, pp.~26--31.

\bibitem{B94}
{\sc J.W. Boardman}, {\em Geometric mixture analysis of imaging spectrometry
  data}, in Proc. IGARSS 4, Pasadena, Calif., 1994, pp.~2369--2371.

\bibitem{CR10}
{\sc E.~Carlini and F.~Rapallo}, {\em {Probability matrices, non-negative rank,
  and parameterization of mixture models}}, Linear Algebra and its
  Applications, 433 (2010), pp.~424--432.

\bibitem{CL08}
{\sc M.T. Chu and M.M. Lin}, {\em {Low-Dimensional Polytope Approximation and
  Its Applications to Nonnegative Matrix Factorization}}, SIAM J. Sci. Comput.,
  30(3) (2008), pp.~1131--1155.

\bibitem{CZA07}
{\sc C.~Cichocki, R.~Zdunek, and S.~Amari}, {\em {Hierarchical ALS Algorithms
  for Nonnegative Matrix and 3D Tensor Factorization}}, Lecture Notes in
  Computer Science, Springer, 4666 (2007), pp.~169--176.

\bibitem{C90}
{\sc K.L. Clarkson}, {\em {Algorithms for polytope covering and
  approximation}}, in Proceedings of the Third Workshop on Algorithms and Data
  Structures, 1993, pp.~246--252.

\bibitem{CR93}
{\sc J.E. Cohen and U.G. Rothblum}, {\em {Nonnegative ranks, Decompositions and
  Factorization of Nonnegative Matrices}}, Linear Algebra and its Applications,
  190 (1993), pp.~149--168.

\bibitem{CCCZ09}
{\sc M.~Conforti, G.~Cornuéjols, and G.~Zambelli}, {\em Extended formulations
  in combinatorial optimization}, 4OR: A Quarterly Journal of Operations
  Research, 10(1) (2010), pp.~1--48.

\bibitem{C94}
{\sc M.D. Craig}, {\em Minimum-volume tranforms for remotely sensed data}, IEEE
  Transactions on Geoscience and Remote Sensing, 32(3) (1994), pp.~542--552.

\bibitem{gau}
{\sc G.~Das}, {\em Approximation schemes in computational geometry}, PhD
  thesis, University of Wisconsin-Madison, 1990.

\bibitem{DG97}
{\sc G.~Das and M.~Goodrich}, {\em {On the Complexity of Optimization Problems
  for Three-Dimensional Convex Polyhedra and Decision Trees}}, Computational
  Geometry: Theory and Applications, 8 (1997), pp.~123--137.

\bibitem{DJ90}
{\sc G.~Das and D.A. Joseph}, {\em {The Complexity of Minimum Convex Nested
  Polyhedra}}, in Proc. of the 2nd Canadian Conference on Computational
  Geometry, 1990, pp.~296--301.

\bibitem{CGP81}
{\sc D.~de~Caen, D.A. Gregory, and N.J. Pullman}, {\em The boolean rank of
  zero-one matrices}, in Proc. 3rd Caribbean Conference on Combinatorics and
  Computing, pp.\@ 169-173, 1981.

\bibitem{DS03}
{\sc D.~Donoho and V.~Stodden}, {\em {When does non-negative matrix
  factorization give a correct decomposition into parts?}}, in In Advances in
  Neural Information Processing 16, 2003.
\newblock MIT Press.

\bibitem{GG08}
{\sc N.~Gillis and F.~Glineur}, {\em {Nonnegative Factorization and The Maximum
  Edge Biclique Problem}}.
\newblock \textsc{CORE} Discussion paper 2008/64, 2008.

\bibitem{GP10}
{\sc N.~Gillis and R.J. Plemmons}, {\em {Dimensionality reduction,
  classification, and spectral mixture analysis using nonnegative
  underapproximation}}, in SPIE conference Volume 7695, paper 46, Orlando,
  2010.

\bibitem{G09}
{\sc M.X. Goemans}, {\em {Smallest compact formulation for the permutahedron}}.
\newblock Talk at ISMP, Chicago, 2009.

\bibitem{IC99}
{\sc A.~Ifarraguerri and C.-I. Chang}, {\em Multispectral and hyperspectral
  image analysis with convex cones}, IEEE Transactions on Geoscience and Remote
  Sensing, 37(2) (1999), pp.~756--770.

\bibitem{KCD09}
{\sc B.~Klingenberg, J.~Curry, and A.~Dougherty}, {\em Non-negative matrix
  factorization: Ill-posedness and a geometric algorithm}, Pattern Recognition,
  42(5) (2009), pp.~918--928.

\bibitem{KW10}
{\sc N.~Krislock and H.~Wolkowicz}, {\em {Euclidean Distance Matrices and
  Applications (survey)}}, tech. report, University of Waterloo, 2010.
\newblock CORR 2010-2010-06.

\bibitem{LS99}
{\sc D.D. Lee and H.S. Seung}, {\em {Learning the Parts of Objects by
  Nonnegative Matrix Factorization}}, Nature, 401 (1999), pp.~788--791.

\bibitem{LS09}
{\sc T.~Lee and A.~Shraibman}, {\em Lower Bounds in Communication Complexity},
  Foundations and Trends in Theoretical Computer Science, 2009.

\bibitem{CL10}
{\sc M.M. Lin and M.T. Chu}, {\em {On the nonnegative rank of Euclidean
  distance matrices}}, Linear Algebra and its Applications, 433(3) (2010),
  pp.~681--689.

\bibitem{MS92}
{\sc J.S.B. Mitchell and S.~Suri}, {\em {Separation and Approximation of
  Polyhedral Surfaces}}, Operations Research Letters, 11 (1992), pp.~255--259.

\bibitem{O77}
{\sc J.~Orlin}, {\em Contentment in graph theory: Covering graphs with
  cliques}, Indagationes Mathematicae (Proceedings), 80(5) (1977),
  pp.~406--424.

\bibitem{Peet}
{\sc R.~Peeters}, {\em {The maximum edge biclique problem is NP-complete}},
  Discrete Applied Mathematics, 131(3) (2003), pp.~651--654.

\bibitem{Tho}
{\sc L.B. Thomas}, {\em {Rank factorization of nonnegative matrices}}, SIAM
  Review, 16(3) (1974), pp.~393--394.

\bibitem{Vav}
{\sc S.A. Vavasis}, {\em On the complexity of nonnegative matrix
  factorization}, SIAM Journal on Optimization, 20 (2009), pp.~1364--1377.

\bibitem{W91}
{\sc C.A. Wang}, {\em {Finding Minimal Nested Polygons}}, BIT, 31 (1991),
  pp.~230--236.

\bibitem{W06}
{\sc V.~Watts}, {\em {Fractional biclique covers and partitions of graphs}},
  Electronic journal of combinatorics, 13 (2006), p.~\#R74.

\bibitem{Y91}
{\sc M.~Yannakakis}, {\em {Expressing Combinatorial Optimization Problems by
  Linear Programs}}, Computer and System Sciences, 43 (1991), pp.~441--466.

\bibitem{Z95}
{\sc G.M. Ziegler}, {\em Lectures on Polytopes}, Springer-Verlag, 1995.

\end{thebibliography}

\appendix

\normalsize

\section{MATLAB Codes}

In this Appendix, codes for two specific reductions are provided: 
\begin{enumerate}
\item[A.1] The reduction from any RNR instance of a rank-three matrix to a two-dimensional NPP instance.
\item[A.2] The geometric interpretation and the visualization of a nonnegative factorization  $M = UV$ when $\rank(M) = 3$ and $\rank(U) = 4$ as a solution of a nested polytopes problem, where the inner polytope $S$ is two-dimensional and the outer polytope $P$ is three-dimensional (see Section~\ref{geoNN}). 
\end{enumerate}

\subsection{RNR to NPP when $\rank(M) = 3$}  \label{2Drep}

The following code has been used to generate Figure~\ref{octagon}.

\lstinputlisting{NMFrank3.m}

\lstinputlisting{basesumtoone.m}

\lstinputlisting{vertices.m}

\subsection{3-D representation} \label{3Drep}

The following code has been used to generate Figures~\ref{euc3}, \ref{euc356} and \ref{octagonA}.

\lstinputlisting{Visualisation3D.m}


\end{document}